\theoremstyle{definition}
\newtheorem{lemma}{Lemma}[section]
\newtheorem{theorem}[lemma]{Theorem}
\newtheorem{proposition}[lemma]{Proposition}
\numberwithin{equation}{section}
\title{Mixed virtual element methods for elliptic optimal control problems with boundary observations in $L^{2}(\Gamma)$}
\author{Minghui Yang, Zhaojie Zhou}
\author{Minghui Yang$^\ddagger$, Zhaojie Zhou$^\ddagger$}
\date{$^\ddagger$School of Mathematics and Statistics, Shandong Normal University, Ji’nan 250014, China}
\begin{document}
	\maketitle
	\begin{abstract}
In this paper we study the mixed virtual element approximation to an elliptic optimal control problem with boundary observations. The objective functional of this type of optimal control problem contains the outward normal derivatives of the state variable on the boundary, which reduces the regularity of solutions to the optimal control problems. We construct the mixed virtual element discrete scheme and derive a priori error estimate for the optimal control problem based on the variational discretization for the control variable. Numerical experiments are carried out on different meshes to support our theoretical findings.

	\end{abstract}
\section{Introduction}
	In this paper we consider the following elliptic optimal control problems with boundary observations:
	\begin{equation}\label{1.1}
		\underset{u\in U_{ad}}{\mbox{min}} J(y,u)=\frac{1}{2}\int_{\Gamma} (\partial_{n_{A}}y-y_{d})^{2}ds+\frac{\gamma}{2}\int_{\Omega}u^{2}dx
	\end{equation}
	subject to
	\begin{equation}\label{1.2}
		\left\{
		\begin{aligned}
			&-\mbox{div}(A\nabla y)=f+u &&\mbox{in}\ \ \Omega,\\
			&y=g &&\mbox{on}\ \ \Gamma,
		\end{aligned}
		\right.
	\end{equation}
	where $y_{d}\in L^{2}(\Gamma)$ and $f\in L^{2}(\Omega)$ are given functions, $\gamma>0$ is the regularization parameter, and $\Omega$ is a convex and polygonal domain in $\mathbb{R}^{2}$ with the boundary $\Gamma=\partial\Omega$, $\partial_{n_{A}}y$ is defined as $A\nabla y\cdot\boldsymbol{n}$ in which $\boldsymbol{n}$ is the outward normal vector on the boundary. The admissible control set $U_{ad}$ is defined by
	\begin{equation}
		U_{ad} =\{ u \in L^{2}(\Omega):\ {a}\le u\le {b}\mbox{ a.e. in }\Omega\},
	\end{equation}
where ${a}$ and ${b}$ are real numbers. Suppose that the matrix $A$ is symmetric, positive definite and, for simplicity, constant, such that there is a constant $c>0$ satisfying
\begin{equation}
	\boldsymbol{v}^{T}A\boldsymbol{v}\ge c\Vert \boldsymbol{v}\Vert_{\mathbb{R}^{2}}^{2} \quad \forall \boldsymbol{v}\in \mathbb{R}^{2}.
\end{equation}

PDE-constrained optimal control problems play increasingly important role in physics, aerospace and engineering. In many optimal control problems, the objective functional usually contains distributed observations of the state. However, in many practical applications, boundary observation is more widely used (see \cite{1,2}). This problem can be viewed as a PDE-constrained optimal control problem with distributed control and observations on the boundary. In \cite{3} the authors considered an elliptic optimal control problems with pointwise observation of the state and the boundary observations were introduced in \cite{4}. In \cite{5} the authors studied the Neumann boundary control of elliptic equations with Dirichlet boundary observations. In this paper we intend to consider the elliptic optimal control problems with inhomogeneous dirichlet boundary condition with boundary observations in $L^{2}(\Gamma)$. The objective functional contains the outward normal derivatives of the state variable on the boundary, which reduces the regularity of solutions to the optimal control problems.

As systems of optimal control problems often contain partial differential equations, it is difficult to obtain their exact solution. Therefore, it is important to solve the PDE-constraints numerically for optimal control problems. In the past decade, numerical method for PDE-constrained optimal control problems is a hot research topic. Many different numerical methods are used to solve optimal control problems, such as the finite element methods \cite{6,7}, the mixed finite element methods \cite{8,9}, the spectral methods \cite{10}, the local discontinuous Galerkin method \cite{11} and so on.

Virtual element method (VEM) was introduced in \cite{vem}, which was born as an extension of the finite element to polygons. Compared to traditional finite element methods, the VEM avoids the explicit construction of the local basis functions and has the capability of dealing with general meshes. Moreover, the VEM has the possibility to easily implement highly regular discrete spaces \cite{13,14}. The method uses approximate bilinear form to look for the solution in the discretization space. Generally, the discrete bilinear form is divided into the sum of two components: the first component is usually constructed using a projection operator to ensure consistency, and the numerical accuracy is guaranteed; the second part is a bilinear form usually added to recover stability, vanishing for polynomials and ensuring the well posedness of the discrete problem. In recent years, the VEM has also been widely studied. For example, the nonconforming VEM \cite{15,16}, the mixed VEM \cite{mvem,mvem2}, the VEM for curved edge \cite{19,trace}, and so on. Recently, the VEM has also been applied to solve optimal control problems. A $C^{1}$ virtual element method for an elliptic distributed optimal control problem with pointwise state constraints was proposed in \cite{21}. In \cite{22,23} the authors considered a posteriori error analysis for virtual element discretization of second order elliptic optimal control problems. A mixed VEM for second order optimal control problems was studied in \cite{24}. As far as we know, apart from distributed observations, there are only few papers on error analysis of optimal control problems with boundary observations.

In this paper, we aim to derive a priori error estimate for mixed virtual element approximation to the optimal control problem \eqref{1.1}-\eqref{1.2}. For the boundary observation problems, the major challenge comes from the fact that the adjoint state equation is an elliptic equation with inhomogeneous dirichlet boundary condition whose value is the partial derivative of the state. To ease this diffculty, we adopt the mixed methods which make the essential boundary condition appears to be a natural one and it is easier for theoretical analysis and numerical implementation. Based on a mixed variational scheme for second order elliptic equations \cite{mvem2}, we convert the state equation to the mixed system. Since the optiaml control problem with boundary observations have limited regularity, the state and adjoint state variables are approximated by the lowest order mixed virtual element space. The variational discretization proposed in \cite{vd} is used to the control variable. We derive the error estimates for the mixed virtual element approximation under weaker assumptions on the regularity of the solution which reach the optimal order estimates. Finally, we provide some numerical examples on different meshes to confirm our theoretical findings.

This paper is organized as follows. In Section 2 we introduce the optimal control problem in details. In Section 3 we derive its first order optimality condition and we present the mixed virtual element analysis of the problem in Section 4. Finally we give some numerical examples in Section 5 to support our results.

\section{Preliminaries}
In this section, we introduce some preliminary knowledge, including the mixed weak form and the continuous first order optimality condition for optimal control problem. In this paper, we adopt the standard notations $W^{m,p}(\Omega)$ for Sobolev spaces on $\Omega$ with norm $\Vert\cdot\Vert_{m,p,\Omega}$ and semi-norm $\vert\cdot\vert_{m,p,\Omega}$ and denote $W^{m,2}$ by $H^{m}(\Omega)$ with norm $\Vert\cdot\Vert_{m,\Omega}$. Let $(\cdot,\cdot)$ and $\langle\cdot,\cdot\rangle$ represent the $L^{2}$-inner products on $L^{2}(\Omega)$ and $L^{2}(\Gamma)$ respectively. In addition, $C$ denote generic positive constants. Then define the space $H(\mbox{div},\Omega)$ as
\begin{equation*}
	H(\mbox{div},\Omega):=\{\boldsymbol{v}\in (L^{2}(\Omega))^{2},\ \mbox{div}\ \boldsymbol{v}\in L^{2}(\Omega)\}
\end{equation*}
with norms
\begin{equation*}
	\Vert\boldsymbol{v}\Vert_{H(\text{div},\Omega)}^{2}:=\Vert\boldsymbol{v}\Vert_{0,\Omega}^{2}+\Vert\text{div }\boldsymbol{v}\Vert_{0,\Omega}^{2}.
\end{equation*}
Through the rest of this paper, we take $\boldsymbol{V}=H(\mbox{div},\Omega)$ and $W=L^{2}(\Omega)$ to carry out anlysis. By introducing a flux variable $\boldsymbol{p}=A\nabla y$, we can convert \eqref{1.2} into the following mixed form
\begin{equation}\label{2.1}
	\left\{
	\begin{aligned}
		\boldsymbol{p}&=A\nabla y,\\
		-\text{div }\boldsymbol{p}&=f+u.
	\end{aligned}
    \right.
\end{equation}
Then we can write \eqref{2.1} in variational form: find $(\boldsymbol{p},y)\in \boldsymbol{V}\times W$ such that,
\begin{equation}\label{2.2}
	\left\{
	\begin{aligned}
		a(\boldsymbol{p},\boldsymbol{v})+b(y,\boldsymbol{v})&=\langle g,\boldsymbol{v}\cdot\boldsymbol{n}\rangle &&\forall\boldsymbol{v}\in\boldsymbol{V},\\
		b(w,\boldsymbol{p})&=-(f+u,w) &&\forall w\in W,
	\end{aligned}
	\right.
\end{equation}
where the blinear form 
\begin{equation*}
	\begin{aligned}
		a(\boldsymbol{\sigma},\boldsymbol{v})&:=\int_{\Omega}A^{-1}\boldsymbol{\sigma}\boldsymbol{v}dx  &&\forall\boldsymbol{\sigma},\boldsymbol{v}\in\boldsymbol{V},\\
		b(\omega,\boldsymbol{\sigma})&:=\int_{\Omega}\omega\cdot\mbox{div }\boldsymbol{\sigma}dx\quad &&\forall\boldsymbol{\sigma}\in\boldsymbol{V},\omega\in W,
	\end{aligned}
\end{equation*}
and
\begin{equation*}
	\langle g,\boldsymbol{v}\cdot\boldsymbol{n}\rangle:=\int_{\Gamma}g\cdot\boldsymbol{v}\cdot\boldsymbol{n}ds\quad \forall\boldsymbol{v}\in\boldsymbol{V}.
\end{equation*}
Therefore the mixed weak form of optimal control problem \eqref{1.1}-\eqref{1.2} is as follows
\begin{equation}\label{2.3}
	\underset{u\in U_{ad}}{\mbox{min}} J(\boldsymbol{p},u)=\frac{1}{2}\int_{\Gamma} (\boldsymbol{p}\cdot\boldsymbol{n}-y_{d})^{2}ds+\frac{\gamma}{2}\int_{\Omega}u^{2}dx
\end{equation}
subject to
\begin{equation}\label{2.4}
	\left\{
	\begin{aligned}
		a(\boldsymbol{p},\boldsymbol{v})+b(y,\boldsymbol{v})&=\langle g,\boldsymbol{v}\cdot\boldsymbol{n}\rangle &&\forall\boldsymbol{v}\in\boldsymbol{V},\\
		b(w,\boldsymbol{p})&=-(f+u,w) &&\forall w\in W.
	\end{aligned}
	\right.
\end{equation}

\begin{lemma}\label{lem2.1}
\textit{Define a trace operator} $\mathscr{R}:H(\text{div},\Omega)\to H^{-\frac{1}{2}}(\Gamma)$ \textit{such that $\mathscr{R}(\boldsymbol{v}):=\boldsymbol{v}\cdot\boldsymbol{n}$ in the sense that}
\begin{equation*}
	\int_{\Gamma}\boldsymbol{v}\cdot\boldsymbol{n}\omega ds:=\int_{\Omega}(\mbox{div }\boldsymbol{v})\omega dx+\int_{\Omega}\boldsymbol{v}\cdot\nabla\omega dx\qquad\forall \omega\in H^{1}(\Omega).
\end{equation*}
\textit{The operator $\mathscr{R}$ is a surjection from} $H(\text{div},\Omega)$ \textit{to $H^{-\frac{1}{2}}(\Gamma)$ and continuous such that}
\begin{equation*}
	\Vert\boldsymbol{v}\cdot\boldsymbol{n}\Vert_{-\frac{1}{2},\Gamma}\le\Vert\boldsymbol{v}\Vert_{H(\text{div},\Omega)}.
\end{equation*}
\end{lemma}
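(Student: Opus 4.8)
The plan is to prove the two assertions separately, first the continuity estimate and then the surjectivity, relying throughout on the boundedness and surjectivity of the ordinary trace operator $\mathscr{T}:H^{1}(\Omega)\to H^{\frac{1}{2}}(\Gamma)$, $\mathscr{T}\omega=\omega|_{\Gamma}$, together with the quotient characterization $\Vert\phi\Vert_{\frac{1}{2},\Gamma}=\inf\{\Vert\omega\Vert_{1,\Omega}:\mathscr{T}\omega=\phi\}$. Before anything else I would check that the defining formula actually produces a well-defined element of $H^{-\frac{1}{2}}(\Gamma)$, i.e. a bounded linear functional on $H^{\frac{1}{2}}(\Gamma)$. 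For $\phi\in H^{\frac{1}{2}}(\Gamma)$ and any $\omega\in H^{1}(\Omega)$ with $\mathscr{T}\omega=\phi$, I set $\langle\mathscr{R}(\boldsymbol{v}),\phi\rangle:=\int_{\Omega}(\mbox{div }\boldsymbol{v})\omega\,dx+\int_{\Omega}\boldsymbol{v}\cdot\nabla\omega\,dx$. Independence of the choice of extension is the first thing to verify: if $\omega_{1},\omega_{2}$ are two such extensions then $w=\omega_{1}-\omega_{2}\in H^{1}_{0}(\Omega)$, and by the very definition of the distributional divergence (via density of $C_{c}^{\infty}(\Omega)$ in $H^{1}_{0}(\Omega)$) one has $\int_{\Omega}(\mbox{div }\boldsymbol{v})w\,dx=-\int_{\Omega}\boldsymbol{v}\cdot\nabla w\,dx$, so the two values coincide.

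For continuity I would apply the Cauchy--Schwarz inequality to each of the two integrals and then combine them with the discrete Cauchy--Schwarz inequality in $\mathbb{R}^{2}$ applied to the pairs $(\Vert\mbox{div }\boldsymbol{v}\Vert_{0,\Omega},\Vert\boldsymbol{v}\Vert_{0,\Omega})$ and $(\Vert\omega\Vert_{0,\Omega},\Vert\nabla\omega\Vert_{0,\Omega})$, which yields $|\langle\mathscr{R}(\boldsymbol{v}),\phi\rangle|\le\Vert\boldsymbol{v}\Vert_{H(\text{div},\Omega)}\Vert\omega\Vert_{1,\Omega}$ for every admissible extension $\omega$. Taking the infimum over all such $\omega$ and invoking the quotient norm on $H^{\frac{1}{2}}(\Gamma)$ gives $|\langle\mathscr{R}(\boldsymbol{v}),\phi\rangle|\le\Vert\boldsymbol{v}\Vert_{H(\text{div},\Omega)}\Vert\phi\Vert_{\frac{1}{2},\Gamma}$, whence the claimed bound $\Vert\boldsymbol{v}\cdot\boldsymbol{n}\Vert_{-\frac{1}{2},\Gamma}\le\Vert\boldsymbol{v}\Vert_{H(\text{div},\Omega)}$ follows by taking the supremum over $\phi\ne 0$.

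For surjectivity, given $\xi\in H^{-\frac{1}{2}}(\Gamma)$ I would construct a preimage by solving an auxiliary coercive problem: find $z\in H^{1}(\Omega)$ such that $\int_{\Omega}\nabla z\cdot\nabla\omega\,dx+\int_{\Omega}z\omega\,dx=\langle\xi,\mathscr{T}\omega\rangle$ for all $\omega\in H^{1}(\Omega)$. The left-hand side is the full $H^{1}(\Omega)$ inner product, hence coercive and bounded, while the right-hand side is a bounded functional because $\mathscr{T}$ is continuous and $\xi\in H^{-\frac{1}{2}}(\Gamma)$; the Lax--Milgram theorem therefore gives a unique $z$. Setting $\boldsymbol{v}=\nabla z$ and testing with $\omega\in C_{c}^{\infty}(\Omega)$ identifies $\mbox{div }\boldsymbol{v}=z\in L^{2}(\Omega)$, so $\boldsymbol{v}\in H(\text{div},\Omega)$; substituting $\mbox{div }\boldsymbol{v}=z$ and $\boldsymbol{v}=\nabla z$ back into the definition of $\mathscr{R}$ shows $\langle\mathscr{R}(\boldsymbol{v}),\mathscr{T}\omega\rangle=\langle\xi,\mathscr{T}\omega\rangle$ for all $\omega\in H^{1}(\Omega)$. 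Since $\mathscr{T}$ is onto $H^{\frac{1}{2}}(\Gamma)$, this forces $\mathscr{R}(\boldsymbol{v})=\xi$.

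I expect the surjectivity step to be the main point of substance, since it requires producing an honest preimage via an existence theorem rather than a direct manipulation; the delicate bookkeeping is to ensure the auxiliary problem is solvable (handled by adding the zeroth-order term to guarantee coercivity on all of $H^{1}(\Omega)$) and that the resulting $\nabla z$ has an $L^{2}$ divergence. The continuity bound is comparatively routine once the quotient norm is used, and the only genuinely careful point there is the well-definedness modulo the choice of extension.
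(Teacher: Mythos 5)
Your proof is correct and is precisely the classical argument from Girault--Raviart, which is the reference the paper cites in lieu of giving a proof: well-definedness and the constant-one continuity bound via the quotient norm on $H^{\frac{1}{2}}(\Gamma)$, and surjectivity by solving $-\Delta z+z=0$ weakly with the boundary functional $\xi$ as data and taking $\boldsymbol{v}=\nabla z$. Since the paper's ``proof'' is only a citation to that reference, your write-up supplies exactly the details being outsourced, and no gaps remain.
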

\begin{proof}
	For the proof we can refer to \cite{26}.
\end{proof}
From \cite{4,ocp}, the optiaml control problems \eqref{1.1}-\eqref{1.2} are equivalent to the following boundary value problems in the sense of distribution:
\begin{equation}\label{2.5}
	\begin{aligned}
		&-\mbox{div}(A\nabla y)=f+u &&\mbox{in}\ \ \Omega, &&&&y=g &&&&&\mbox{on}\ \ \Gamma,\\
		&-\mbox{div}(A\nabla z)=0 &&\mbox{in}\ \ \Omega, &&&&z=-(\partial_{n_{A}}y-y_{d}) &&&&&\mbox{on}\ \ \Gamma,\\
		&\int_{\Omega}(\gamma u+z)(v-u)dx\le 0, &&\forall v\in U_{ad}.
	\end{aligned}
\end{equation}

By the standard arguments, we can derive the weak form of first-order optimality condition as follows: find $(\boldsymbol{p},y,\boldsymbol{r},z,u)\in\boldsymbol{V}\times W\times\boldsymbol{V}\times W\times U_{ad}$ such that
\begin{equation}\label{2.6}
	\begin{aligned}
		a(\boldsymbol{p},\boldsymbol{v})+b(y,\boldsymbol{v})&=\langle g,\boldsymbol{v}\cdot\boldsymbol{n}\rangle  &&\forall \boldsymbol{v}\in\boldsymbol{V},\\
		b(w,\boldsymbol{p})&=-(f+u,w) &&\forall w\in W,\\
		a(\boldsymbol{r},\boldsymbol{v})+b(z,\boldsymbol{v})&=\langle y_{d}-\boldsymbol{p}\cdot\boldsymbol{n}, \boldsymbol{v}\cdot\boldsymbol{n}\rangle  &&\forall \boldsymbol{v}\in\boldsymbol{V},\\
		b(w,\boldsymbol{r})&=0 &&\forall w\in W,\\
		\int_{\Omega}(\gamma u-z)(v-u)dx&\ge 0  &&\forall v\in U_{ad}.\\
	\end{aligned}
\end{equation}
By intrduce a projection operator, the variational inequality of \eqref{2.6} can be written as
\begin{equation}\label{2.7}
	u=P_{U_{ad}}\bigg\{-\frac{1}{\gamma}z\bigg\},
\end{equation}
where $P_{U_{ad}}(\cdot):=\text{max}\{a, \text{min}\{\cdot, b\}\}$ is the $L^{2}$-orthogonal projection onto $U_{ad}$.

At the end of the section, we consider the regularity of solutions to the optimal control problem \eqref{1.1}-\eqref{1.2}.
\begin{lemma}\label{lem2.2}
	\textit{Let $\Omega$ be an open bounded convex polygonal domain with a Lipschitz continuous boundary $\Gamma$. Assume that $f\in L^{2}(\Omega),g\in H^{\frac{3}{2}}(\Gamma)$ and $y_{d}\in H^{\frac{1}{2}}(\Gamma)$. Let $u\in U_{ad}$ be the solution of \eqref{2.6} with corresponding state $y$ and adjoint state $z$. Then we have}
	\begin{equation*}
		y\in H^{2}(\Omega), \quad z\in H^{1}(\Omega), \quad u\in H^{1}(\Omega).
	\end{equation*}
\end{lemma}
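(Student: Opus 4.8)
The plan is to bootstrap regularity along the decoupled boundary value problems \eqref{2.5} that characterize the optimal solution, using the convexity of $\Omega$ precisely where $H^2$ regularity is needed and tracking how the limited smoothness of the adjoint boundary datum propagates. The natural starting point is that $u\in U_{ad}\subset L^2(\Omega)$ by definition, so the source term of the state equation satisfies $f+u\in L^2(\Omega)$.

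First I would treat the state equation $-\mbox{div}(A\nabla y)=f+u$ in $\Omega$ with $y=g$ on $\Gamma$. Since $f+u\in L^2(\Omega)$, $g\in H^{3/2}(\Gamma)$, $A$ is constant and symmetric positive definite, and $\Omega$ is convex and polygonal, the $H^2$ regularity theory for the Dirichlet problem on convex polygons yields $y\in H^2(\Omega)$, which is the first assertion. Next, because $y\in H^2(\Omega)$ forces $A\nabla y\in (H^1(\Omega))^2$, the trace theorem gives $\partial_{n_A} y=A\nabla y\cdot\boldsymbol{n}\in H^{1/2}(\Gamma)$; together with $y_d\in H^{1/2}(\Gamma)$ this shows that the Dirichlet datum of the adjoint equation obeys $-(\partial_{n_A} y-y_d)\in H^{1/2}(\Gamma)$. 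The adjoint state then solves $-\mbox{div}(A\nabla z)=0$ in $\Omega$ with boundary data in $H^{1/2}(\Gamma)$, and standard elliptic theory for the Dirichlet problem with $H^{1/2}$ data delivers $z\in H^1(\Omega)$, the second assertion. Finally, the projection characterization \eqref{2.7}, namely $u=P_{U_{ad}}\{-\frac{1}{\gamma}z\}$ with $P_{U_{ad}}(\cdot)=\max\{a,\min\{\cdot,b\}\}$, combined with the $H^1$-stability of truncation at the constants $a$ and $b$ (Stampacchia's lemma), yields $u\in H^1(\Omega)$, the third assertion.

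The main obstacle, and the structural reason the regularity does not improve further, is the $H^{1/2}(\Gamma)$ ceiling on the adjoint boundary datum. Because this datum is a normal derivative of the state, it is only as smooth as the trace of $\nabla y$ permits, so $z$ cannot be bootstrapped beyond $H^1(\Omega)$ and, through \eqref{2.7}, $u$ cannot exceed $H^1(\Omega)$; feeding $u\in H^1(\Omega)$ back into the state equation leaves $f+u$ merely in $L^2(\Omega)$, so the bootstrap reaches a fixed point and the three stated regularities are the natural endpoint, with no circularity in the argument. The one genuinely non-routine ingredient is the $H^2$ estimate for $y$ on the polygonal domain, where convexity must be invoked to rule out the corner singularities that would otherwise reduce the regularity to $H^{1+s}$ with $s<1$; I would cite the corner regularity theory (e.g.\ Grisvard) for this step rather than reprove it.
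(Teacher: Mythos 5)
Your proposal is correct and follows essentially the same route as the paper's proof: $H^2$ elliptic regularity on the convex polygon for the state, the trace theorem to place $\partial_{n_A}y$ (hence the adjoint Dirichlet datum) in $H^{1/2}(\Gamma)$, standard theory for $z\in H^1(\Omega)$, and the projection formula \eqref{2.7} for $u\in H^1(\Omega)$. You merely make explicit two points the paper leaves implicit, namely the citation to corner regularity theory and the Stampacchia-type $H^1$-stability of the truncation $P_{U_{ad}}$.
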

\begin{proof}
	From $f+u\in L^{2}(\Omega)$, $g\in L^{\frac{3}{2}}(\Gamma)$ and the assumptions on the domain $\Omega$ and $A$, we can conclude from the elliptic regularity theory \cite{FEM} that $y\in H^{2}(\Omega)$ and then $\partial_{n_{A}}y\in H^{\frac{1}{2}}(\Gamma)$. Therefore we can derive $z\in H^{1}(\Omega)$ under the assumption on $y_{d}$ and the regularity of the adjoint equation \eqref{2.5} with $z\vert_{\Gamma}=-(\partial_{n_{A}}y-y_{d})\in H^{\frac{1}{2}}(\Gamma)$. From \eqref{2.7}, we can obtain $u\in H^{1}(\Omega)$ directly.
\end{proof}

\section{Mixed virtual element approximation}
In this section, we aim to construct the discrete scheme of optimal control problem. Let $\Omega$ be a convex polygonal domain and $\mathcal{T}_{h}$ be a partitioning of $\Omega$ into disjoint polygon $E$. Moreover, let $h_{E}$ denote the diameter of $E$, $h=\underset{E\in\mathcal{T}_{h}}{\mbox{max}}\ h_{E}$. Then, we introduce the assumption of mesh regularity in virtual element method.

\vspace{8pt}
\noindent\textbf{Assumption 3.1. } The family of decompositions $\mathcal{T}_{h}$ satisfies    e that $E$ is star-shaped, for every element $E$, there exists a $\rho_{E}>0$ such that\\
$\bullet$ $E$ is star-shaped with respect to a ball of radius $\rho_{E}h_{E}$,\\
$\bullet$ the ratio between the length $h_{e}$ of every edge $e$ of $E$ and the diameter $h_{E}$ of $E$ is bigger than $\rho_{E}$.
\vspace{8pt}

\noindent Follow \cite{mvem2}, we introduce some useful definaition. For every element $E\in\mathcal{T}_{h}$, define
\begin{equation*}
	\boldsymbol{\mathcal{M}}_{k}(E):=\nabla\mathbb{P}_{k+1}(E),
\end{equation*}
and
\begin{equation*}
	\boldsymbol{\mathcal{M}}_{k}^{\bot}(E):=\text{the } L^{2}(E)\ \text{orthogonal of }\boldsymbol{\mathcal{M}}_{k}(E) \text{ in }(\mathbb{P}_{k}(E))^{2}.
\end{equation*}
Further, we have
\begin{equation*}
	(\mathbb{P}_{k}(E))^{2}= \boldsymbol{\mathcal{M}}_{k}(E)\oplus\boldsymbol{\mathcal{M}}_{k}^{\bot}(E).
\end{equation*}
For integer $k\ge 0$, according to the defination of \cite{mvem2}, the mixed virtual element space can be defined as
\begin{equation*}
	\begin{aligned}
		&\boldsymbol{V}_{h}=\{\boldsymbol{v}\in\boldsymbol{V}:\ \boldsymbol{v}\vert_{E}\in\boldsymbol{V}_{h}^{E},\ \forall E\in \mathcal{T}_{h}\},\\
		&W_{h}=\{w\in W:\ w\vert_{E}\in\mathbb{P}_{k}(E),\ \forall E\in\mathcal{T}_{h}\},
	\end{aligned}
\end{equation*}
where
\begin{equation*}
	\begin{aligned}
		&\boldsymbol{V}_{h}^{E}:=\{\boldsymbol{v}_{h}\in H(\mbox{div}, E)\cap H(\mbox{rot}, E)\ \mbox{such that}\\
		&\qquad\qquad(\boldsymbol{v}_{h}\cdot\boldsymbol{n})\vert_{e}\in \mathbb{P}_{k}(e)\ \mbox{for all edge\ }e\in\partial E,\ (\mbox{div }\boldsymbol{v}_{h})\vert_{E}\in\mathbb{P}_{k}(E)\ \mbox{and}\ (\mbox{rot }\boldsymbol{v}_{h})\vert_{E}\in\mathbb{P}_{k-1}(E)\}.
	\end{aligned}
\end{equation*}

The degrees of freedom for $W_{h}$ are obvious, while the degrees of freedom for $\boldsymbol{V}_{h}$ are defined by (see \cite{mvem2})
\begin{equation}\label{3.1}
	\begin{aligned}
		&\int_{e}\boldsymbol{v}\cdot\boldsymbol{n}q_{k}ds  &&\text{for all edge }e,\ \text{for all }q_{k}\in\mathbb{P}_{k}(e),\\
		&\int_{E}\boldsymbol{v}\cdot\boldsymbol{m}_{k-1}dx  &&\text{for all element }E,\ \text{for all }\boldsymbol{m}_{k-1}\in\boldsymbol{\mathcal{M}}_{k-1}(E),\\
		&\int_{E}\boldsymbol{v}\cdot\boldsymbol{m}^{\bot}_{k}dx  &&\text{for all element }E,\ \text{for all }\boldsymbol{m}^{\bot}_{k}\in\boldsymbol{\mathcal{M}}^{\bot}_{k}(E).
	\end{aligned}
\end{equation}
According to \cite{mvem2}, the interpolation operator $\boldsymbol{\Pi}_{h}$ from $(H^{1}(\Omega))^{2}\to\boldsymbol{V}_{h}$ can be defined through the
degrees of freedom \eqref{3.1}
\begin{equation}\label{3.2}
	\left\{
	\begin{aligned}
		&\int_{e}(\boldsymbol{v}-\boldsymbol{\Pi}_{h}\boldsymbol{v})\cdot\boldsymbol{n}q_{k}ds=0  &&\text{for all edge }e,\ \text{for all }q_{k}\in\mathbb{P}_{k}(e),\\
		&\int_{E}(\boldsymbol{v}-\boldsymbol{\Pi}_{h}\boldsymbol{v})\cdot\boldsymbol{m}_{k-1}dx=0  &&\text{for all element }E,\ \text{for all }\boldsymbol{m}_{k-1}\in\boldsymbol{\mathcal{M}}_{k-1}(E),\\
		&\int_{E}(\boldsymbol{v}-\boldsymbol{\Pi}_{h}\boldsymbol{v})\cdot\boldsymbol{m}^{\bot}_{k}dx=0  &&\text{for all element }E,\ \text{for all }\boldsymbol{m}^{\bot}_{k}\in\boldsymbol{\mathcal{M}}^{\bot}_{k}(E).
	\end{aligned}
\right.
\end{equation}
Similarly, we intrduce the interpolation of $W_{h}$
\begin{equation}\label{3.31}
	\int_{E}(\varphi-\Pi_{h}\varphi)q_{k}dx=0\quad \forall q_{k}\in\mathbb{P}_{k}(E).
\end{equation}

Since the optiaml control problem \eqref{1.1}-\eqref{1.2} have low regularity, the lowest order space associated with the partitioning $\mathcal{T}_{h}$ of $\Omega$ will be used. In order to obtain the discrete scheme of \eqref{2.4}, we introduce projection operator $\boldsymbol{\Pi}_{k}^{0}:H(\text{div},E)\to(\mathbb{P}_{k}(E))^{2}$ defined by
\begin{equation}\label{3.3}
	\int_{E}(\boldsymbol{v}-\boldsymbol{\Pi}_{k}^{0}\boldsymbol{v})\boldsymbol{p}_{k}dx=0\quad\forall \boldsymbol{p}_{k}\in(\mathbb{P}_{k}(E))^{2}.
\end{equation}
In \cite{vemprojection} it was shown that the degrees of freedom \eqref{3.2} allow the explicit computation of the projection $\boldsymbol{\Pi}_{k}^{0}\boldsymbol{v}$ from the knowledge of the degrees of freedom \eqref{3.2} of $\boldsymbol{v}$. From \cite{mvem2}, we have the following projection error estimate
\begin{equation}\label{3.4}
	\Vert\boldsymbol{v}-\boldsymbol{\Pi}_{k}^{0}\boldsymbol{v}\Vert_{0,\Omega}\le Ch^{s}\vert\boldsymbol{v}\vert_{s,\Omega}\quad 0\le s\le k+1.
\end{equation}

 From the definition of virtual element function space, we know $\text{div }\boldsymbol{v}_{h}\in\mathbb{P}_{k},\forall\boldsymbol{v}_{h}\in\boldsymbol{V}_{h}$ for every element $E\in\mathcal{T}_{h}$. Therefore, the bilinear form $b(\omega_{h},\boldsymbol{v}_{h})$ can be explicitly computed. On the other hand, from \cite{mvem2}, by using projection operator $\boldsymbol{\Pi}_{0}^{0}$ the discrete bilinear  form $a_{h}$ can be defined as
\begin{equation}\label{3.5}
	a_{h}^{E}(\boldsymbol{v}_{h},\boldsymbol{w}_{h}):=a^{E}(\boldsymbol{\Pi}_{0}^{0}\boldsymbol{v}_{h},\boldsymbol{\Pi}_{0}^{0}\boldsymbol{w}_{h})+S^{E}(\boldsymbol{v}_{h}-\boldsymbol{\Pi}_{0}^{0}\boldsymbol{v}_{h},\boldsymbol{w}_{h}-\boldsymbol{\Pi}_{0}^{0}\boldsymbol{w}_{h}),
\end{equation}
where $S^{E}(\boldsymbol{v}_{h},\boldsymbol{w}_{h})$ is any symmetric and positive definite bilinear form that scales like $a^{E}(\boldsymbol{v}_{h},\boldsymbol{w}_{h})$ (see \cite{vem}). In addition, we assume that $S^{E}$ statify the following conditions: there exist two positive constants $\alpha^{*}$ and $\alpha_{*}$ such that
\begin{equation}\label{3.6}
	\alpha_{*}a^{E}(\boldsymbol{v}_{h},\boldsymbol{w}_{h})\le S^{E}(\boldsymbol{v}_{h},\boldsymbol{w}_{h})\le \alpha^{*}a^{E}(\boldsymbol{v}_{h},\boldsymbol{w}_{h}).
\end{equation}
From \cite{mvem2}, the most natural VEM stabilization $S^{E}(\cdot, \cdot)$ is given by
\begin{equation}\label{3.7}
	S^{E}(\boldsymbol{v}-\boldsymbol{\Pi}_{0}^{0}\boldsymbol{v},\boldsymbol{w}-\boldsymbol{\Pi}_{0}^{0}\boldsymbol{w}):=\vert E\vert\sum_{i}\mbox{dof}_{i}(\boldsymbol{v}-\boldsymbol{\Pi}_{0}^{0}\boldsymbol{v})\mbox{dof}_{i}(\boldsymbol{w}-\boldsymbol{\Pi}_{0}^{0}\boldsymbol{w}),
\end{equation}
where dof$_{i}$ denote the $i$-th local degree of freedom.
\begin{proposition}
	(See \cite{mvem2}) \textit{The blinear form $a_{h}$ has the following two properties:\\
		$\bullet$\ consistency
		\begin{equation*}
			a^{E}_{h}(\boldsymbol{q}_{k},\boldsymbol{v}_{h})=a^{E}(\boldsymbol{q}_{k},\boldsymbol{v}_{h})\qquad \forall E\in\mathcal{T}_{h},\ \forall\boldsymbol{q}_{k}\in(\mathbb{P}_{k}(E))^{2},\ \forall\boldsymbol{v}_{h}\in\boldsymbol{V}_{h}^{E},
		\end{equation*}
		$\bullet$\ stability}\\
	\begin{equation*}
		\exists\mu_{*},\ \mu^{*}>0\quad \text{s.t.}\quad \mu_{*}a^{E}(\boldsymbol{v}_{h},\boldsymbol{v}_{h})\le a^{E}_{h}(\boldsymbol{v}_{h},\boldsymbol{v}_{h})\le \mu^{*}a^{E}(\boldsymbol{v}_{h},\boldsymbol{v}_{h})\ \ \forall E\in\mathcal{T}_{h},\ \forall\boldsymbol{v}_{h}\in\boldsymbol{V}_{h}^{E}.
	\end{equation*}
\end{proposition}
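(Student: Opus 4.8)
The plan is to verify the two asserted properties separately from the definition \eqref{3.5} of $a_h^E$, exploiting in both cases that $A$ is constant and symmetric so that $A^{-1}$ maps $(\mathbb{P}_k(E))^2$ into itself.

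For consistency I would first observe that the $L^2$-projection reproduces polynomials lying in its range, so that $\boldsymbol{q}_k-\boldsymbol{\Pi}^0_0\boldsymbol{q}_k$ vanishes; consequently the stabilization term $S^E(\boldsymbol{q}_k-\boldsymbol{\Pi}^0_0\boldsymbol{q}_k,\boldsymbol{v}_h-\boldsymbol{\Pi}^0_0\boldsymbol{v}_h)$ drops out and the definition \eqref{3.5} reduces to $a_h^E(\boldsymbol{q}_k,\boldsymbol{v}_h)=a^E(\boldsymbol{q}_k,\boldsymbol{\Pi}^0_0\boldsymbol{v}_h)$. It then remains to replace $\boldsymbol{\Pi}^0_0\boldsymbol{v}_h$ by $\boldsymbol{v}_h$ in this expression. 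Writing $a^E(\boldsymbol{q}_k,\boldsymbol{v}_h)=\int_E A^{-1}\boldsymbol{q}_k\cdot\boldsymbol{v}_h\,dx$ and using that $A^{-1}\boldsymbol{q}_k$ is again a polynomial of the same degree, I would test the defining orthogonality relation \eqref{3.3} of the projection with $\boldsymbol{p}_k=A^{-1}\boldsymbol{q}_k$ to obtain $\int_E A^{-1}\boldsymbol{q}_k\cdot(\boldsymbol{v}_h-\boldsymbol{\Pi}^0_0\boldsymbol{v}_h)\,dx=0$, i.e. $a^E(\boldsymbol{q}_k,\boldsymbol{v}_h)=a^E(\boldsymbol{q}_k,\boldsymbol{\Pi}^0_0\boldsymbol{v}_h)$. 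Combining the two identities yields the consistency identity.

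For stability the key preliminary step is to upgrade the $L^2$-orthogonality of $\boldsymbol{\Pi}^0_0$ to orthogonality in the $a^E$ inner product. Since $A^{-1}$ is constant, $A^{-1}\boldsymbol{\Pi}^0_0\boldsymbol{v}_h$ is an admissible test field in \eqref{3.3}, so that $a^E(\boldsymbol{\Pi}^0_0\boldsymbol{v}_h,\boldsymbol{v}_h-\boldsymbol{\Pi}^0_0\boldsymbol{v}_h)=0$ and the Pythagorean identity
\[
a^E(\boldsymbol{v}_h,\boldsymbol{v}_h)=a^E(\boldsymbol{\Pi}^0_0\boldsymbol{v}_h,\boldsymbol{\Pi}^0_0\boldsymbol{v}_h)+a^E(\boldsymbol{v}_h-\boldsymbol{\Pi}^0_0\boldsymbol{v}_h,\boldsymbol{v}_h-\boldsymbol{\Pi}^0_0\boldsymbol{v}_h)
\]
holds. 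Abbreviating the two nonnegative terms on the right by $T_1$ and $T_2$, the definition \eqref{3.5} gives $a_h^E(\boldsymbol{v}_h,\boldsymbol{v}_h)=T_1+S^E(\boldsymbol{v}_h-\boldsymbol{\Pi}^0_0\boldsymbol{v}_h,\boldsymbol{v}_h-\boldsymbol{\Pi}^0_0\boldsymbol{v}_h)$, and the spectral equivalence \eqref{3.6} bounds the second summand between $\alpha_*T_2$ and $\alpha^*T_2$. Hence $T_1+\alpha_*T_2\le a_h^E(\boldsymbol{v}_h,\boldsymbol{v}_h)\le T_1+\alpha^*T_2$, and comparing with $T_1+T_2=a^E(\boldsymbol{v}_h,\boldsymbol{v}_h)$ produces the bounds with $\mu_*=\min\{1,\alpha_*\}$ and $\mu^*=\max\{1,\alpha^*\}$.

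The point requiring the most care is the transition from the $L^2$-orthogonality that defines $\boldsymbol{\Pi}^0_0$ to the $a^E$-orthogonality invoked in both parts. This is precisely where the hypothesis that $A$ is constant (and symmetric) is indispensable: it guarantees that $A^{-1}$ preserves polynomial degree, so that $A^{-1}$ applied to a projected or polynomial field remains an admissible test function in \eqref{3.3}. Were $A$ variable, $A^{-1}\boldsymbol{q}_k$ would cease to be a polynomial and neither the consistency identity nor the Pythagorean splitting would close, forcing one to adopt a projection adapted to the $A^{-1}$-weighted inner product. Everything else is a routine rearrangement combined with the stabilization assumption \eqref{3.6}.
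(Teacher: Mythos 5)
Your proof is correct and follows exactly the standard consistency/stability argument for VEM bilinear forms; the paper itself gives no proof of this proposition, deferring entirely to \cite{mvem2}, where the same reasoning (polynomial reproduction kills the stabilization term, $L^2$-orthogonality upgraded to $a^E$-orthogonality via the constant $A^{-1}$, Pythagorean splitting plus the spectral bounds \eqref{3.6}) is used. The one point worth flagging is that your opening step --- that the projector reproduces $\boldsymbol{q}_k$ --- holds for $\boldsymbol{\Pi}_0^0$ only when $k=0$, which is the lowest-order case the paper actually employs; for the proposition as literally stated for general $k$, the definition \eqref{3.5} would need to use $\boldsymbol{\Pi}_k^0$, and your argument then goes through verbatim.
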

In a natural way, we define
\begin{equation*}
	a_{h}(\boldsymbol{v}_{h},\boldsymbol{w}_{h})=\sum_{E\in\mathcal{T}_{h}}a_{h}^{E}(\boldsymbol{v}_{h},\boldsymbol{w}_{h}).
\end{equation*}
\begin{lemma}\label{lem3.2}(See \cite{mvem2})
	\textit{The blinear form $a_{h}$ is continuous and elliptic in $(L^{2}(\Omega))^2$. There exists two positive numbers, for every $\boldsymbol{v}_{h},\boldsymbol{w}_{h}\in\boldsymbol{V}_{h}$ such that}
	\begin{equation*}
		\begin{aligned}
			\vert a_{h}(\boldsymbol{v}_{h},\boldsymbol{w}_{h})\vert&\le M\Vert\boldsymbol{v}_{h}\Vert_{0,\Omega}\Vert\boldsymbol{w}_{h}\Vert_{0,\Omega},\\
			\vert a_{h}(\boldsymbol{v}_{h},\boldsymbol{v}_{h})\vert&\ge \alpha\Vert\boldsymbol{v}_{h}\Vert_{0,\Omega}^{2}.\\
		\end{aligned}
	\end{equation*}
\end{lemma}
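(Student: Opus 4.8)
The plan is to reduce both inequalities to the element-level stability estimates of the preceding Proposition, namely $\mu_{*}a^{E}(\boldsymbol{v}_{h},\boldsymbol{v}_{h})\le a_{h}^{E}(\boldsymbol{v}_{h},\boldsymbol{v}_{h})\le \mu^{*}a^{E}(\boldsymbol{v}_{h},\boldsymbol{v}_{h})$, combined with the spectral bounds on $A^{-1}$ coming from the coercivity assumption on $A$. Since $A$ is symmetric, constant and positive definite, so is $A^{-1}$; writing $\Lambda$ for the largest eigenvalue of $A$, the hypothesis $\boldsymbol{v}^{T}A\boldsymbol{v}\ge c\Vert\boldsymbol{v}\Vert_{\mathbb{R}^{2}}^{2}$ yields the two-sided bound $\tfrac{1}{\Lambda}\Vert\boldsymbol{w}\Vert_{\mathbb{R}^{2}}^{2}\le A^{-1}\boldsymbol{w}\cdot\boldsymbol{w}\le \tfrac{1}{c}\Vert\boldsymbol{w}\Vert_{\mathbb{R}^{2}}^{2}$ for every $\boldsymbol{w}\in\mathbb{R}^{2}$. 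Integrating over an element gives $\tfrac{1}{\Lambda}\Vert\boldsymbol{v}_{h}\Vert_{0,E}^{2}\le a^{E}(\boldsymbol{v}_{h},\boldsymbol{v}_{h})\le \tfrac{1}{c}\Vert\boldsymbol{v}_{h}\Vert_{0,E}^{2}$, i.e.\ $a^{E}(\cdot,\cdot)$ is spectrally equivalent to the $(L^{2}(E))^{2}$ inner product.

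For the ellipticity (lower) bound, I would apply the lower stability estimate elementwise and then the lower spectral bound, obtaining $a_{h}^{E}(\boldsymbol{v}_{h},\boldsymbol{v}_{h})\ge \mu_{*}a^{E}(\boldsymbol{v}_{h},\boldsymbol{v}_{h})\ge \tfrac{\mu_{*}}{\Lambda}\Vert\boldsymbol{v}_{h}\Vert_{0,E}^{2}$. Summing over all $E\in\mathcal{T}_{h}$ and recalling $a_{h}(\boldsymbol{v}_{h},\boldsymbol{v}_{h})=\sum_{E}a_{h}^{E}(\boldsymbol{v}_{h},\boldsymbol{v}_{h})$ gives $a_{h}(\boldsymbol{v}_{h},\boldsymbol{v}_{h})\ge \tfrac{\mu_{*}}{\Lambda}\Vert\boldsymbol{v}_{h}\Vert_{0,\Omega}^{2}$, which is the claimed coercivity with $\alpha=\mu_{*}/\Lambda$.

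For the continuity (upper) bound, the crucial observation is that each $a_{h}^{E}$ is a symmetric, positive semidefinite bilinear form: the consistency term $a^{E}(\boldsymbol{\Pi}_{0}^{0}\cdot,\boldsymbol{\Pi}_{0}^{0}\cdot)$ inherits symmetry and positive semidefiniteness from the symmetric positive definite matrix $A^{-1}$, while the stabilization $S^{E}$ is symmetric and positive definite by the assumption \eqref{3.6}. Consequently the Cauchy–Schwarz inequality is legitimate for $a_{h}^{E}$, giving $\vert a_{h}^{E}(\boldsymbol{v}_{h},\boldsymbol{w}_{h})\vert\le a_{h}^{E}(\boldsymbol{v}_{h},\boldsymbol{v}_{h})^{1/2}a_{h}^{E}(\boldsymbol{w}_{h},\boldsymbol{w}_{h})^{1/2}$. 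Inserting the upper stability estimate and the upper spectral bound yields $\vert a_{h}^{E}(\boldsymbol{v}_{h},\boldsymbol{w}_{h})\vert\le \tfrac{\mu^{*}}{c}\Vert\boldsymbol{v}_{h}\Vert_{0,E}\Vert\boldsymbol{w}_{h}\Vert_{0,E}$; summing over $E$ and applying the discrete Cauchy–Schwarz inequality to the element contributions produces the global bound with $M=\mu^{*}/c$.

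The calculations are routine once this framework is in place; the only step needing genuine care is the justification that $a_{h}^{E}$ is a true semi-inner product, since this is exactly what allows the off-diagonal continuity estimate to be deduced from the diagonal stability bound rather than requiring a separate argument. The passage from local to global estimates is then a standard summation, and the mesh regularity of Assumption 3.1 enters only implicitly, through the uniformity of the constants $\mu_{*}$ and $\mu^{*}$ across the family of decompositions.
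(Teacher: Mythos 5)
Your argument is correct and is essentially the standard one: the paper itself offers no proof of this lemma, simply citing Beir\~{a}o da Veiga et al., and the proof given there proceeds exactly as you do, combining the elementwise stability bounds $\mu_{*}a^{E}\le a_{h}^{E}\le\mu^{*}a^{E}$ with the spectral equivalence of $a^{E}(\cdot,\cdot)$ and the $(L^{2}(E))^{2}$ inner product, and using the Cauchy--Schwarz inequality for the semi-inner product $a_{h}^{E}$ to pass from the diagonal stability estimate to the off-diagonal continuity bound. Your explicit remark that the symmetry and positive semidefiniteness of $a_{h}^{E}$ is the step that licenses this passage is exactly the right point to flag.
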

\begin{lemma}\label{lem3.3}(See \cite{mvem2})
	\textit{The discrete inf-sup condition holds}
	\begin{equation*}
		\underset{\boldsymbol{v}_{h}\in\boldsymbol{V}_{h}}{\text{sup}}\frac{b(\boldsymbol{v}_{h},w_{h})}{\Vert\boldsymbol{v}_{h}\Vert_{0,\Omega}}\ge\beta\Vert w_{h}\Vert_{0,\Omega}.
	\end{equation*}
\end{lemma}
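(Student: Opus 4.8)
The plan is to prove the discrete inf-sup condition by the classical Fortin argument, combining the continuous inf-sup stability of the pair $H(\operatorname{div},\Omega)\times L^{2}(\Omega)$ with the commuting-diagram property of the virtual interpolant $\boldsymbol{\Pi}_{h}$. Fix $w_{h}\in W_{h}$; it suffices to exhibit one $\boldsymbol{v}_{h}\in\boldsymbol{V}_{h}$ for which the quotient is bounded below by $\beta\Vert w_{h}\Vert_{0,\Omega}$. First I would construct a continuous lifting: let $\psi\in H_{0}^{1}(\Omega)$ solve $-\Delta\psi=w_{h}$ in $\Omega$ and set $\boldsymbol{v}:=\nabla\psi$. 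Since $\Omega$ is convex and polygonal, elliptic regularity gives $\psi\in H^{2}(\Omega)$ with $\Vert\psi\Vert_{2,\Omega}\le C\Vert w_{h}\Vert_{0,\Omega}$, so that $\boldsymbol{v}\in(H^{1}(\Omega))^{2}$, $\operatorname{div}\boldsymbol{v}=w_{h}$, and $\Vert\boldsymbol{v}\Vert_{1,\Omega}\le C\Vert w_{h}\Vert_{0,\Omega}$. In particular $\boldsymbol{v}$ lies in the domain of the interpolation operator $\boldsymbol{\Pi}_{h}$ from \eqref{3.2}.

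The key step is to verify the commuting relation $\operatorname{div}(\boldsymbol{\Pi}_{h}\boldsymbol{v})=\Pi_{h}(\operatorname{div}\boldsymbol{v})$, where $\Pi_{h}$ is the $L^{2}$-projection onto $W_{h}$ from \eqref{3.31}. Integrating by parts on each element $E$, for every $q_{k}\in\mathbb{P}_{k}(E)$,
\[
\int_{E}\operatorname{div}(\boldsymbol{v}-\boldsymbol{\Pi}_{h}\boldsymbol{v})\,q_{k}\,dx=\int_{\partial E}(\boldsymbol{v}-\boldsymbol{\Pi}_{h}\boldsymbol{v})\cdot\boldsymbol{n}\,q_{k}\,ds-\int_{E}(\boldsymbol{v}-\boldsymbol{\Pi}_{h}\boldsymbol{v})\cdot\nabla q_{k}\,dx .
\]
The boundary term vanishes by the edge degrees of freedom in the first line of \eqref{3.2}, while $\nabla q_{k}\in\nabla\mathbb{P}_{k}(E)=\boldsymbol{\mathcal{M}}_{k-1}(E)$, so the volume term vanishes by the second line of \eqref{3.2}. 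Hence $\operatorname{div}(\boldsymbol{\Pi}_{h}\boldsymbol{v})$ and $\Pi_{h}(\operatorname{div}\boldsymbol{v})$ share all moments against $\mathbb{P}_{k}(E)$, and since both belong to $\mathbb{P}_{k}(E)$ (recall $\operatorname{div}\boldsymbol{v}_{h}\vert_{E}\in\mathbb{P}_{k}(E)$ by the definition of $\boldsymbol{V}_{h}^{E}$), they coincide. Because $\operatorname{div}\boldsymbol{v}=w_{h}\in W_{h}$ is already piecewise polynomial, $\Pi_{h}w_{h}=w_{h}$, and thus $\boldsymbol{v}_{h}:=\boldsymbol{\Pi}_{h}\boldsymbol{v}$ satisfies $\operatorname{div}\boldsymbol{v}_{h}=w_{h}$.

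With this choice I would compute $b(\boldsymbol{v}_{h},w_{h})=\int_{\Omega}w_{h}\operatorname{div}\boldsymbol{v}_{h}\,dx=\Vert w_{h}\Vert_{0,\Omega}^{2}$, bound the denominator by the $L^{2}$-stability of the interpolant, $\Vert\boldsymbol{v}_{h}\Vert_{0,\Omega}=\Vert\boldsymbol{\Pi}_{h}\boldsymbol{v}\Vert_{0,\Omega}\le C\Vert\boldsymbol{v}\Vert_{1,\Omega}\le C\Vert w_{h}\Vert_{0,\Omega}$, and divide to obtain the claim with $\beta=1/C$. The main obstacle is precisely this last stability estimate: since virtual functions are not known in closed form, $\boldsymbol{\Pi}_{h}\boldsymbol{v}$ cannot be bounded by direct computation, and one must instead argue elementwise through a scaling and norm-equivalence argument on a reference configuration, where Assumption 3.1 (star-shapedness and the edge-to-diameter ratio bound) is exactly what keeps the interpolation constant $C$ uniform in $h$ across the admissible polygonal meshes. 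The remaining ingredients—the continuous lifting and the integration-by-parts identity—are routine.
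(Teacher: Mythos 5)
The paper gives no proof of this lemma at all—it simply defers to \cite{mvem2}—and your Fortin-type argument (continuous lifting through the Poisson problem, the commuting property $\operatorname{div}\boldsymbol{\Pi}_{h}\boldsymbol{v}=\Pi_{h}\operatorname{div}\boldsymbol{v}$ forced by the edge and interior degrees of freedom, and the $L^{2}$-stability of the interpolant via scaling under Assumption 3.1) is precisely the standard proof used in that reference, so you have in effect supplied the proof the paper omits. The only blemish is a sign: with $-\Delta\psi=w_{h}$ and $\boldsymbol{v}=\nabla\psi$ you get $\operatorname{div}\boldsymbol{v}=-w_{h}$, so take $\boldsymbol{v}=-\nabla\psi$ (harmless, since the supremum is invariant under $\boldsymbol{v}_{h}\mapsto-\boldsymbol{v}_{h}$).
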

Then we propose the mixed virtual element approximation of \eqref{2.3}-\eqref{2.4}.
\begin{equation}\label{3.8}
	\underset{u_{h}\in U_{ad}}{\mbox{min}} J(\boldsymbol{p}_{h},u_{h})=\frac{1}{2}\int_{\Gamma} (\boldsymbol{p}_{h}\cdot\boldsymbol{n}-y_{d})^{2}ds+\frac{\gamma}{2}\int_{\Omega}u_{h}^{2}dx
\end{equation}
subject to
\begin{equation}\label{3.9}
	\left\{
	\begin{aligned}
		a_{h}(\boldsymbol{p}_{h},\boldsymbol{v}_{h})+b(y_{h},\boldsymbol{v}_{h})&=\langle g,\boldsymbol{v}_{h}\cdot\boldsymbol{n}\rangle &&\forall\boldsymbol{v}_{h}\in\boldsymbol{V}_{h},\\
		b(w_{h},\boldsymbol{p}_{h})&=-(f+u_{h},w_{h}) &&\forall w_{h}\in W_{h}.
	\end{aligned}
	\right.
\end{equation}
For the discretization of the control variable we use the variational discretization concept presented in \cite{vd}. By the standard Lagrange multiplier theory the solution can be characterized by the following first order discrete optimality conditions: find $(\boldsymbol{p}_{h},y_{h},\boldsymbol{r}_{h},z_{h},u_{h})\in\boldsymbol{V}_{h}\times W_{h}\times\boldsymbol{V}_{h}\times W_{h}\times U_{ad}$ such that
\begin{equation}\label{3.10}
	\begin{aligned}
		a_{h}(\boldsymbol{p}_{h},\boldsymbol{v}_{h})+b(y_{h},\boldsymbol{v}_{h})&=\langle g, \boldsymbol{v}_{h}\cdot\boldsymbol{n}\rangle &&\forall\boldsymbol{v}_{h}\in\boldsymbol{V}_{h},\\
		b(w_{h},\boldsymbol{p}_{h})&=-(f+u_{h},w_{h}) &&\forall w_{h}\in W_{h},\\
		a_{h}(\boldsymbol{r}_{h},\boldsymbol{v}_{h})+b(z_{h},\boldsymbol{v}_{h})&=\langle y_{d}-\boldsymbol{p}_{h}\cdot\boldsymbol{n}, \boldsymbol{v}_{h}\cdot\boldsymbol{n}\rangle  &&\forall\boldsymbol{v}_{h}\in\boldsymbol{V}_{h},\\
		b(w_{h},\boldsymbol{r}_{h})&=0 &&\forall w_{h}\in W_{h},\\
		\int_{\Omega}(\gamma u_{h}-z_{h})(v-u_{h})dx&\ge 0  &&\forall v\in U_{ad}.\\
	\end{aligned}
\end{equation}

\section{A priori error estimates}
In this section we develop a priori error estimates for the mixed virtual element approximation. We let $\mathcal{S}_{h}$ denote the set of all edges in $\mathcal{T}_{h}$. This set can be divided into the set of boundary edges $\mathcal{S}_{h}(\partial\Omega):=\{e\in\mathcal{S}_{h},\ s\subseteq\partial\Omega\}$ and internal edges $\mathcal{S}_{h}(\Omega):=\{e\in\mathcal{S}_{h},\ s\subseteq\Omega\}$. For given $E\in\mathcal{T}_{h}$, we denote by $\mathcal{S}_{h}(E)\subset\mathcal{S}_{h}$ the set of edges of $E$. Given an edge $e\in\mathcal{S}_{h}$, we let $h_{e}$ be its length and let $\boldsymbol{n}_{e} :=(n_{1},n_{2})^{T}$ be the corresponding unit tangential vector along $e$. Firstly, we intrduce some useful conclusions which are important for deriving error estimates.
\begin{lemma}\label{lem4.1}(See \cite{mvem2})
	\textit{Let $(\varphi_{I},\boldsymbol{\psi}_{I})\in W_{h}\times\boldsymbol{V}_{h}$ be interpolant of $(\varphi,\boldsymbol{\psi})$ defined by \eqref{3.2} and \eqref{3.31}, respetively. Then we have the following approximation}
	\begin{equation*}
		\begin{aligned}
			\Vert \varphi-\varphi_{I}\Vert_{0,\Omega}&\le Ch\Vert \varphi\Vert_{1,\Omega}, &&\forall \varphi\in H^{1}(\Omega),\\
			\Vert \boldsymbol{\psi}-\boldsymbol{\psi}_{I}\Vert_{0,\Omega}&\le Ch\Vert \boldsymbol{\psi}\Vert_{1,\Omega}, &&\forall \boldsymbol{\psi}\in (H^{1}(\Omega))^{2},\\
			\Vert \text{div }(\boldsymbol{\psi}-\boldsymbol{\psi}_{I})\Vert_{0,\Omega}&\le Ch\Vert \text{div }\boldsymbol{\psi}\vert_{1,\Omega}, &&\forall \text{div }\boldsymbol{\psi}\in H^{1}(\Omega).
		\end{aligned}
	\end{equation*}
\end{lemma}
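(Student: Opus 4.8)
The plan is to establish all three bounds elementwise on each $E\in\mathcal{T}_{h}$ and then sum over the partition, Assumption 3.1 ensuring that the constants produced by the local scaling arguments are uniform in $h$. The three estimates are of increasing difficulty: the bound on $\varphi-\varphi_{I}$ is classical $L^{2}$-projection theory, the divergence bound follows from a commuting-diagram identity that reduces it to the scalar case, and the $L^{2}$ bound on $\boldsymbol{\psi}-\boldsymbol{\psi}_{I}$ is the genuinely technical part, since $\boldsymbol{\psi}_{I}=\boldsymbol{\Pi}_{h}\boldsymbol{\psi}$ is only known through its degrees of freedom.

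For the scalar estimate, the interpolant $\varphi_{I}=\Pi_{h}\varphi$ defined by \eqref{3.31} is exactly the local $L^{2}$-orthogonal projection of $\varphi$ onto $\mathbb{P}_{k}(E)$. Since the $L^{2}$ projection realizes the best polynomial approximation in the $L^{2}$ norm, the Bramble--Hilbert (Dupont--Scott) theory on a star-shaped element gives $\Vert\varphi-\varphi_{I}\Vert_{0,E}\le Ch_{E}\vert\varphi\vert_{1,E}$, and summing the squares yields the first inequality.

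For the divergence estimate I would first prove the commuting property $\mbox{div}(\boldsymbol{\Pi}_{h}\boldsymbol{\psi})=\Pi_{h}(\mbox{div }\boldsymbol{\psi})$ on each $E$. For any $w_{h}\in\mathbb{P}_{k}(E)$, integration by parts gives
\begin{equation*}
	\int_{E}\mbox{div}(\boldsymbol{\psi}-\boldsymbol{\Pi}_{h}\boldsymbol{\psi})\,w_{h}\,dx=-\int_{E}(\boldsymbol{\psi}-\boldsymbol{\Pi}_{h}\boldsymbol{\psi})\cdot\nabla w_{h}\,dx+\int_{\partial E}(\boldsymbol{\psi}-\boldsymbol{\Pi}_{h}\boldsymbol{\psi})\cdot\boldsymbol{n}\,w_{h}\,ds.
\end{equation*}
Because $\nabla w_{h}\in\nabla\mathbb{P}_{k}(E)=\boldsymbol{\mathcal{M}}_{k-1}(E)$ and $w_{h}\vert_{e}\in\mathbb{P}_{k}(e)$, both terms on the right vanish by the interpolation conditions \eqref{3.2}. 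As $\mbox{div }\boldsymbol{\Pi}_{h}\boldsymbol{\psi}\in\mathbb{P}_{k}(E)$, this shows that $\mbox{div }\boldsymbol{\Pi}_{h}\boldsymbol{\psi}$ coincides with the $L^{2}$ projection $\Pi_{h}(\mbox{div }\boldsymbol{\psi})$. The divergence bound is then identical to the scalar estimate applied to $\mbox{div }\boldsymbol{\psi}$.

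The heart of the proof is the $L^{2}$ bound on the vector interpolation error, and here the main obstacle is the $L^{2}$-stability of $\boldsymbol{\Pi}_{h}$. Since the degrees of freedom \eqref{3.1} are unisolvent on $\boldsymbol{V}_{h}^{E}$ and $(\mathbb{P}_{k}(E))^{2}\subset\boldsymbol{V}_{h}^{E}$, the operator reproduces polynomials, $\boldsymbol{\Pi}_{h}\boldsymbol{q}=\boldsymbol{q}$ for all $\boldsymbol{q}\in(\mathbb{P}_{k}(E))^{2}$. Writing $\boldsymbol{\psi}-\boldsymbol{\Pi}_{h}\boldsymbol{\psi}=(\boldsymbol{\psi}-\boldsymbol{q})-\boldsymbol{\Pi}_{h}(\boldsymbol{\psi}-\boldsymbol{q})$ with $\boldsymbol{q}=\boldsymbol{\Pi}_{k}^{0}\boldsymbol{\psi}$, the triangle inequality and \eqref{3.4} bound $\Vert\boldsymbol{\psi}-\boldsymbol{q}\Vert_{0,E}$ by $Ch_{E}\vert\boldsymbol{\psi}\vert_{1,E}$, so it remains to control $\Vert\boldsymbol{\Pi}_{h}(\boldsymbol{\psi}-\boldsymbol{q})\Vert_{0,E}$. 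For this I would establish a degree-of-freedom norm-equivalence on the virtual space, of the form $\Vert\boldsymbol{v}_{h}\Vert_{0,E}^{2}\le C\sum_{i}s_{i}\,\vert\mbox{dof}_{i}(\boldsymbol{v}_{h})\vert^{2}$ with suitable scaling weights $s_{i}$ for the edge and interior moments, and then use that $\mbox{dof}_{i}(\boldsymbol{\Pi}_{h}\boldsymbol{w})=\mbox{dof}_{i}(\boldsymbol{w})$ together with scaled trace and Cauchy--Schwarz estimates to bound each $\vert\mbox{dof}_{i}(\boldsymbol{\psi}-\boldsymbol{q})\vert$ by $\Vert\boldsymbol{\psi}-\boldsymbol{q}\Vert_{0,E}+h_{E}\vert\boldsymbol{\psi}-\boldsymbol{q}\vert_{1,E}$. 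Combining these gives $\Vert\boldsymbol{\Pi}_{h}(\boldsymbol{\psi}-\boldsymbol{q})\Vert_{0,E}\le Ch_{E}\vert\boldsymbol{\psi}\vert_{1,E}$, and summing over $E$ completes the second inequality. The delicate point throughout is verifying that the norm-equivalence and trace constants depend only on the regularity parameter $\rho_{E}$ of Assumption 3.1 and not on the particular shape of the polygon.
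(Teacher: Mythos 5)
The paper does not prove this lemma at all: it is quoted verbatim from \cite{mvem2} (with the $L^2$ stability of the interpolation operator worked out in detail in \cite{interpolat}), so there is no in-paper argument to compare against. Your outline is the standard proof from that literature and is sound. The first estimate is indeed just the elementwise $L^2$ projection plus Bramble--Hilbert on star-shaped elements; your commuting-diagram computation for the divergence is correct (the volume term dies against the $\boldsymbol{\mathcal{M}}_{k-1}(E)=\nabla\mathbb{P}_k(E)$ moments and the boundary term against the edge moments, so $\operatorname{div}\boldsymbol{\Pi}_h\boldsymbol{\psi}=\Pi_h\operatorname{div}\boldsymbol{\psi}$), reducing that case to the scalar one; and the reduction of the vector estimate to polynomial reproduction plus local $L^2$ stability of $\boldsymbol{\Pi}_h$ is exactly how the cited works proceed. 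The one place where you are asserting rather than proving is the degree-of-freedom norm equivalence $\Vert\boldsymbol{v}_h\Vert_{0,E}^2\le C\sum_i s_i\vert\mbox{dof}_i(\boldsymbol{v}_h)\vert^2$ on the virtual space: since $\boldsymbol{v}_h$ is not a polynomial, this is not a finite-dimensional scaling argument on a reference element but a genuine stability result for the (non-computable) virtual functions, and it is precisely the technical content of \cite{interpolat}. You correctly identify it as the delicate point, but in a self-contained proof it would need its own argument (typically via an auxiliary boundary value problem characterizing $\boldsymbol{v}_h$ on $E$ and a priori estimates uniform in $\rho_E$); as written, your proof is complete modulo that one cited inequality, which is an acceptable level of rigor given that the paper itself cites the whole lemma.
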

\noindent Further, according to \cite{mvep}, we have the following result.
\begin{lemma}\label{lem4.2}(See \cite{mvep})
	\textit{Let $\boldsymbol{\sigma}_{I}$ be interpolant of $\boldsymbol{\sigma}\in (H^{1}(\Omega))^{2}$ defined by \eqref{3.2}. Then the following result holds
	\begin{equation*}
		\Vert(\boldsymbol{\sigma}-\boldsymbol{\sigma}_{I})\cdot\boldsymbol{n}_{e}\Vert_{0,e}\le Ch^{\frac{1}{2}}\Vert\boldsymbol{\sigma}\Vert_{1,E}\quad\forall e\in \mathcal{S}_{h},
	\end{equation*}
where $E$ is any element of $\mathcal{T}_{h}$.}
\end{lemma}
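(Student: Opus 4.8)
The plan is to reduce this edge estimate to a one–dimensional polynomial best–approximation error along $e$, and then transfer that error to the interior of the element $E$ by a scaled trace inequality. Write $\boldsymbol{e}:=\boldsymbol{\sigma}-\boldsymbol{\sigma}_I$ and let $P_k^e$ denote the $L^2(e)$–orthogonal projection onto $\mathbb{P}_k(e)$. The first observation is that the edge degrees of freedom in \eqref{3.2} completely determine the normal trace of the interpolant on $e$: since $(\boldsymbol{\sigma}_I\cdot\boldsymbol{n})\vert_e\in\mathbb{P}_k(e)$ and $\int_e(\boldsymbol{\sigma}-\boldsymbol{\sigma}_I)\cdot\boldsymbol{n}\,q_k\,ds=0$ for every $q_k\in\mathbb{P}_k(e)$, we obtain $(\boldsymbol{\sigma}_I\cdot\boldsymbol{n})\vert_e=P_k^e(\boldsymbol{\sigma}\cdot\boldsymbol{n})$, and hence
\[
(\boldsymbol{e}\cdot\boldsymbol{n})\vert_e=(I-P_k^e)(\boldsymbol{\sigma}\cdot\boldsymbol{n}).
\]
Thus the normal component of the interpolation error on $e$ is exactly the projection error of $\boldsymbol{\sigma}\cdot\boldsymbol{n}$ onto $\mathbb{P}_k(e)$, and the quantity to estimate is intrinsic to $\boldsymbol{\sigma}$ — the virtual, non-explicit interior values of $\boldsymbol{\sigma}_I$ never enter.

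Next I would insert an arbitrary polynomial $\boldsymbol{q}\in(\mathbb{P}_k(E))^2$. Because $\boldsymbol{n}(=\boldsymbol{n}_e)$ is constant along $e$, the restriction $\boldsymbol{q}\cdot\boldsymbol{n}\vert_e$ lies in $\mathbb{P}_k(e)$ and is therefore fixed by $P_k^e$, so $(I-P_k^e)(\boldsymbol{q}\cdot\boldsymbol{n})=0$. Using the $L^2(e)$–stability of $I-P_k^e$ and $\vert\boldsymbol{n}\vert=1$,
\[
\Vert\boldsymbol{e}\cdot\boldsymbol{n}\Vert_{0,e}
=\Vert(I-P_k^e)\big((\boldsymbol{\sigma}-\boldsymbol{q})\cdot\boldsymbol{n}\big)\Vert_{0,e}
\le\Vert(\boldsymbol{\sigma}-\boldsymbol{q})\cdot\boldsymbol{n}\Vert_{0,e}
\le\Vert\boldsymbol{\sigma}-\boldsymbol{q}\Vert_{0,e}.
\]
It now suffices to bound the \emph{full} trace $\Vert\boldsymbol{\sigma}-\boldsymbol{q}\Vert_{0,e}$, which — unlike the interpolant — is a genuine $H^1(E)$ function, by interior quantities. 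Here I would apply the scaled multiplicative trace inequality on $E$,
\[
\Vert v\Vert_{0,e}^2\le C\big(h_E^{-1}\Vert v\Vert_{0,E}^2+h_E\,\vert v\vert_{1,E}^2\big),
\qquad v\in H^1(E),
\]
to each component of $\boldsymbol{\sigma}-\boldsymbol{q}$.

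Finally I would choose $\boldsymbol{q}$ to be the degree-$k$ averaged Taylor (best-approximation) polynomial of $\boldsymbol{\sigma}$ on $E$, so that the Bramble–Hilbert lemma yields $\Vert\boldsymbol{\sigma}-\boldsymbol{q}\Vert_{0,E}\le Ch_E\vert\boldsymbol{\sigma}\vert_{1,E}$ and $\vert\boldsymbol{\sigma}-\boldsymbol{q}\vert_{1,E}\le C\vert\boldsymbol{\sigma}\vert_{1,E}$. Substituting into the trace inequality gives $\Vert\boldsymbol{\sigma}-\boldsymbol{q}\Vert_{0,e}^2\le C(h_E^{-1}h_E^2+h_E)\vert\boldsymbol{\sigma}\vert_{1,E}^2=Ch_E\vert\boldsymbol{\sigma}\vert_{1,E}^2$, and combining with the previous display, together with $h_E\le h$ and $\vert\boldsymbol{\sigma}\vert_{1,E}\le\Vert\boldsymbol{\sigma}\Vert_{1,E}$, produces the claimed bound $\Vert(\boldsymbol{\sigma}-\boldsymbol{\sigma}_I)\cdot\boldsymbol{n}_e\Vert_{0,e}\le Ch^{1/2}\Vert\boldsymbol{\sigma}\Vert_{1,E}$. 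The main obstacle is not any single estimate but keeping the constants uniform: both the scaled trace inequality and the Bramble–Hilbert approximation must hold with $C$ independent of $h$ and of the particular element, and this is precisely where the star-shapedness and the edge-to-diameter lower bound of Assumption 3.1 enter, through a scaling argument to a reference configuration of unit size. One should also verify that the half-power of $h$ is sharp under the available $H^1$ regularity; it is, since the $h_E^{-1}\Vert\cdot\Vert_{0,E}^2$ term in the trace inequality already caps the gain at $h_E$ inside the square root.
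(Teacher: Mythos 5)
Your proof is correct. Note that the paper itself offers no argument for this lemma at all: it is stated with a bare citation to the reference on a posteriori estimates for mixed VEM, so you have supplied a derivation the authors omit. Your two key steps are exactly the right ones and are the standard route in that literature: first, the edge degrees of freedom in \eqref{3.2} force $(\boldsymbol{\sigma}_I\cdot\boldsymbol{n})\vert_e$ to coincide with the $L^2(e)$-projection $P_k^e(\boldsymbol{\sigma}\cdot\boldsymbol{n})$, which makes the edge error computable from the trace of $\boldsymbol{\sigma}$ alone and sidesteps the fact that $\boldsymbol{\sigma}_I$ is not explicitly known inside $E$; second, inserting a polynomial $\boldsymbol{q}\in(\mathbb{P}_k(E))^2$ (legitimate because the edges are straight, so $\boldsymbol{q}\cdot\boldsymbol{n}_e\vert_e\in\mathbb{P}_k(e)$ is invariant under $P_k^e$) reduces everything to the scaled trace inequality plus Bramble--Hilbert on the star-shaped element, where Assumption 3.1 guarantees uniform constants. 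The bookkeeping $h_E^{-1}h_E^2+h_E=2h_E$ correctly yields the half power of $h$, which is indeed sharp for $H^1$ regularity. The only cosmetic quibble is that the inequality you invoke is the additive scaled trace inequality rather than the multiplicative one, but the bound you write is the correct one and is what the argument needs.
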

Now we introduce the auxiliary problem: find $(\boldsymbol{p}_{h}(u),y_{h}(u),\boldsymbol{r}_{h}(u),z_{h}(u))\in\boldsymbol{V}_{h}\times W_{h}\times\boldsymbol{V}_{h}\times W_{h}$ such that
\begin{equation}\label{4.1}
	\begin{aligned}
		a_{h}(\boldsymbol{p}_{h}(u),\boldsymbol{v}_{h})+b(y_{h}(u),\boldsymbol{v}_{h})&=\langle g, \boldsymbol{v}_{h}\cdot\boldsymbol{n}\rangle &&\forall\boldsymbol{v}_{h}\in\boldsymbol{V}_{h},\qquad\qquad\\
		b(w_{h},\boldsymbol{p}_{h}(u))&=-(f+u,w_{h}) &&\forall w_{h}\in W_{h},\\
		a_{h}(\boldsymbol{r}_{h}(u),\boldsymbol{v}_{h})+b(z_{h}(u),\boldsymbol{v}_{h})&=\langle y_{d}-\boldsymbol{p}_{h}(u)\cdot\boldsymbol{n}, \boldsymbol{v}_{h}\cdot\boldsymbol{n}\rangle &&\forall\boldsymbol{v}_{h}\in\boldsymbol{V}_{h},\\
		b(w_{h},\boldsymbol{r}_{h}(u))&=0 &&\forall w_{h}\in W_{h}.
	\end{aligned}
\end{equation}
Moreover, let $(\boldsymbol{r}_{h}(y),z_{h}(y))\in \boldsymbol{V}_{h}\times W_{h}$ be the solution of the following auxiliary problem
\begin{equation}\label{4.2}
	\begin{aligned}
		a_{h}(\boldsymbol{r}_{h}(y),\boldsymbol{v}_{h})+b(z_{h}(y),\boldsymbol{v}_{h})&=\langle y_{d}-\boldsymbol{p}\cdot\boldsymbol{n}, \boldsymbol{v}_{h}\cdot\boldsymbol{n}\rangle &&\forall\boldsymbol{v}_{h}\in\boldsymbol{V}_{h},\\
		b(w_{h},\boldsymbol{r}_{h}(y))&=0 &&\forall w_{h}\in W_{h}.
	\end{aligned}
\end{equation}
Note that $(\boldsymbol{p}_{h}(u),y_{h}(u))$ is the standard mixed virtual element approximation of $(\boldsymbol{p},y)$. Using the error estimate introduced in \cite{mvem2}, we have the following results.
\begin{lemma}\label{lem4.3}
	\textit{Let $(\boldsymbol{p},y,\boldsymbol{r},z)$ be the solution of \eqref{2.6}, and let $(\boldsymbol{p}_{h}(u),y_{h}(u))$ be the solution of the auxiliary problem \eqref{4.1}. Assume that the conditions of Lemma} \ref{lem2.2} \textit{are valid. Then, we have}
	\begin{equation*}
		\begin{aligned}
			\Vert y-y_{h}(u)\Vert_{0,\Omega}&\le Ch\Vert y\Vert_{2,\Omega},\\
			\Vert \boldsymbol{p}-\boldsymbol{p}_{h}(u)\Vert_{H(\text{div},\Omega)}&\le Ch(\Vert y\Vert_{2,\Omega}+\Vert f+u\Vert_{1,\Omega}).
		\end{aligned}
	\end{equation*}
\end{lemma}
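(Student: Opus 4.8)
The plan is to run the classical Brezzi-type error analysis for mixed methods, the one genuinely virtual-element ingredient being the treatment of the consistency error $a_h-a$. Abbreviate $(\boldsymbol{p}_h,y_h):=(\boldsymbol{p}_h(u),y_h(u))$ and introduce the interpolants $\boldsymbol{p}_I:=\boldsymbol{\Pi}_h\boldsymbol{p}\in\boldsymbol{V}_h$ and $y_I:=\Pi_h y\in W_h$ from \eqref{3.2} and \eqref{3.31}, splitting each error into an interpolation part and a discrete part,
\[
\boldsymbol{p}-\boldsymbol{p}_h=(\boldsymbol{p}-\boldsymbol{p}_I)+\boldsymbol{\theta},\qquad y-y_h=(y-y_I)+\xi,\qquad \boldsymbol{\theta}:=\boldsymbol{p}_I-\boldsymbol{p}_h,\ \ \xi:=y_I-y_h.
\]
Under Lemma \ref{lem2.2} we have $\boldsymbol{p}=A\nabla y\in(H^1(\Omega))^2$ and $\text{div}\,\boldsymbol{p}=-(f+u)\in H^1(\Omega)$, so the interpolation parts are immediately bounded by Lemma \ref{lem4.1}, and all the work reduces to estimating $\boldsymbol{\theta}$ and $\xi$. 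I would first dispose of the divergence: subtracting the second equations of \eqref{2.6} and \eqref{4.1} gives $b(w_h,\boldsymbol{p}-\boldsymbol{p}_h)=0$ for all $w_h\in W_h$, and since $\text{div}\,\boldsymbol{V}_h\subset W_h$ while the mixed VEM interpolant commutes with the divergence (so that $\text{div}\,\boldsymbol{p}_I=\Pi_h\,\text{div}\,\boldsymbol{p}=\text{div}\,\boldsymbol{p}_h$, cf. \cite{mvem2}), this forces $\text{div}\,\boldsymbol{\theta}=0$ and $\text{div}(\boldsymbol{p}-\boldsymbol{p}_h)=-(I-\Pi_h)(f+u)$. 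The $L^2$-projection bound (the first estimate of Lemma \ref{lem4.1}) then yields $\Vert\text{div}(\boldsymbol{p}-\boldsymbol{p}_h)\Vert_{0,\Omega}\le Ch\Vert f+u\Vert_{1,\Omega}$, which is the divergence part of the $H(\text{div})$ estimate.

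Next comes the main error equation. For $\boldsymbol{v}_h\in\boldsymbol{V}_h\subset\boldsymbol{V}$ the data term $\langle g,\boldsymbol{v}_h\cdot\boldsymbol{n}\rangle$ is identical in \eqref{2.6} and \eqref{4.1} and cancels exactly, leaving
\[
a_h(\boldsymbol{\theta},\boldsymbol{v}_h)=\big[a_h(\boldsymbol{p}_I,\boldsymbol{v}_h)-a(\boldsymbol{p},\boldsymbol{v}_h)\big]-b(y-y_h,\boldsymbol{v}_h)\qquad\forall\boldsymbol{v}_h\in\boldsymbol{V}_h.
\]
Since $\text{div}\,\boldsymbol{v}_h\in W_h$, orthogonality of the $W_h$-projection kills $b(y-y_I,\boldsymbol{v}_h)$, so $b(y-y_h,\boldsymbol{v}_h)=b(\xi,\boldsymbol{v}_h)$; testing with $\boldsymbol{v}_h=\boldsymbol{\theta}$ and using $\text{div}\,\boldsymbol{\theta}=0$ makes the coupling term $b(\xi,\boldsymbol{\theta})$ vanish, and the coercivity of $a_h$ (Lemma \ref{lem3.2}) gives $\alpha\Vert\boldsymbol{\theta}\Vert_{0,\Omega}^2\le a_h(\boldsymbol{p}_I,\boldsymbol{\theta})-a(\boldsymbol{p},\boldsymbol{\theta})$.

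The crux, and the step I expect to be the main obstacle, is estimating the consistency term $a_h(\boldsymbol{p}_I,\boldsymbol{v}_h)-a(\boldsymbol{p},\boldsymbol{v}_h)$, where the distinction $a_h\ne a$ is the genuinely virtual-element feature. Element by element I would insert the polynomial projection $\boldsymbol{\Pi}_k^0\boldsymbol{p}$ and write
\[
a_h^E(\boldsymbol{p}_I,\boldsymbol{v}_h)-a^E(\boldsymbol{p},\boldsymbol{v}_h)=a_h^E(\boldsymbol{p}_I-\boldsymbol{\Pi}_k^0\boldsymbol{p},\boldsymbol{v}_h)+\big[a_h^E(\boldsymbol{\Pi}_k^0\boldsymbol{p},\boldsymbol{v}_h)-a^E(\boldsymbol{\Pi}_k^0\boldsymbol{p},\boldsymbol{v}_h)\big]+a^E(\boldsymbol{\Pi}_k^0\boldsymbol{p}-\boldsymbol{p},\boldsymbol{v}_h).
\]
The middle bracket vanishes by the consistency property of $a_h$ (the Proposition), because $\boldsymbol{\Pi}_k^0\boldsymbol{p}$ is a polynomial; the remaining two are controlled by the continuity of $a_h$ (Lemma \ref{lem3.2}) and of $a$, combined with $\Vert\boldsymbol{p}-\boldsymbol{\Pi}_k^0\boldsymbol{p}\Vert_{0,\Omega}\le Ch\Vert\boldsymbol{p}\Vert_{1,\Omega}$ from \eqref{3.4} and the triangle inequality $\Vert\boldsymbol{p}_I-\boldsymbol{\Pi}_k^0\boldsymbol{p}\Vert_{0,\Omega}\le\Vert\boldsymbol{p}_I-\boldsymbol{p}\Vert_{0,\Omega}+\Vert\boldsymbol{p}-\boldsymbol{\Pi}_k^0\boldsymbol{p}\Vert_{0,\Omega}$ (Lemma \ref{lem4.1} and \eqref{3.4}). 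Summing over $E$ gives $|a_h(\boldsymbol{p}_I,\boldsymbol{v}_h)-a(\boldsymbol{p},\boldsymbol{v}_h)|\le Ch\Vert\boldsymbol{p}\Vert_{1,\Omega}\Vert\boldsymbol{v}_h\Vert_{0,\Omega}$. Taking $\boldsymbol{v}_h=\boldsymbol{\theta}$ then yields $\Vert\boldsymbol{\theta}\Vert_{0,\Omega}\le Ch\Vert\boldsymbol{p}\Vert_{1,\Omega}\le Ch\Vert y\Vert_{2,\Omega}$, which together with the interpolation bound and the divergence estimate completes the $H(\text{div})$ estimate for $\boldsymbol{p}-\boldsymbol{p}_h$.

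Finally, for $\xi=y_I-y_h$ I would invoke the discrete inf-sup condition (Lemma \ref{lem3.3}). Rearranging the error equation gives $b(\xi,\boldsymbol{v}_h)=\big[a_h(\boldsymbol{p}_I,\boldsymbol{v}_h)-a(\boldsymbol{p},\boldsymbol{v}_h)\big]-a_h(\boldsymbol{\theta},\boldsymbol{v}_h)$, whose right-hand side is $\le Ch\Vert y\Vert_{2,\Omega}\Vert\boldsymbol{v}_h\Vert_{0,\Omega}$ by the consistency bound just derived, the continuity of $a_h$, and $\Vert\boldsymbol{\theta}\Vert_{0,\Omega}\le Ch\Vert y\Vert_{2,\Omega}$; hence $\beta\Vert\xi\Vert_{0,\Omega}\le Ch\Vert y\Vert_{2,\Omega}$. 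Adding $\Vert y-y_I\Vert_{0,\Omega}\le Ch\Vert y\Vert_{1,\Omega}$ from Lemma \ref{lem4.1} and using the triangle inequality yields $\Vert y-y_h\Vert_{0,\Omega}\le Ch\Vert y\Vert_{2,\Omega}$. Every ingredient except the consistency splitting is a routine Brezzi-type estimate; the VEM consistency term is the only genuinely delicate point.
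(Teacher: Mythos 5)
Your proposal is correct, and it coincides with the standard argument: the paper gives no proof of Lemma \ref{lem4.3} at all, simply citing the error analysis of the mixed VEM in \cite{mvem2}, and what you have written out (commuting-diagram property for the divergence, coercivity of $a_h$ on the divergence-free discrete error $\boldsymbol{\theta}$, the consistency splitting through $\boldsymbol{\Pi}_k^0\boldsymbol{p}$, and the discrete inf-sup for $\xi$) is precisely that cited Brezzi-type analysis. In effect you have filled in the proof the authors delegate to the reference; the only point worth double-checking in a final writeup is that $(\mathbb{P}_k(E))^2\subset\boldsymbol{V}_h^E$, which justifies applying the continuity of $a_h^E$ to $\boldsymbol{p}_I-\boldsymbol{\Pi}_k^0\boldsymbol{p}$, and this does hold by the definition of $\boldsymbol{V}_h^E$.
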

According to Lemma \ref{lem2.2} that we have only $z\in H^{1}(\Omega)$. Therefore, it is necessary to obtain the error estimate for the mixed virtual element approximation
under weaker assumptions on the regularity of the solution.
\begin{lemma}\label{lem4.4}
	\textit{Let $(\boldsymbol{p},y,\boldsymbol{r},z)$ be the solution of \eqref{2.6}, and let $(\boldsymbol{r}_{h}(u),z_{h}(u))$ be the solution of the auxiliary problem \eqref{4.2}. Assume that the conditions of Lemma} \ref{lem2.2} \textit{are valid. Then, we have}
	\begin{equation*}
		\Vert z-z_{h}(y)\Vert_{0,\Omega}\le Ch.
	\end{equation*}
\end{lemma}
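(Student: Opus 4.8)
The plan is to run a duality (Aubin--Nitsche) argument. A direct energy estimate is not available: Lemma \ref{lem2.2} only gives $z\in H^{1}(\Omega)$, so $\boldsymbol r=A\nabla z$ lies merely in $(L^{2}(\Omega))^{2}$ and its interpolant \eqref{3.2} is undefined; moreover, estimating $\Pi_{h}z-z_{h}(y)$ through the inf--sup condition (Lemma \ref{lem3.3}) would leave a consistency term containing $\|\boldsymbol v_{h}-\boldsymbol\Pi_{0}^{0}\boldsymbol v_{h}\|_{0,E}$ for an arbitrary $\boldsymbol v_{h}\in\boldsymbol V_{h}$, which carries no power of $h$. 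I would first record two structural facts. Since $b(w,\boldsymbol r)=0$ and $b(w_{h},\boldsymbol r_{h}(y))=0$ with $\operatorname{div}\boldsymbol r_{h}(y)\in W_{h}$, both fluxes are divergence free, $\operatorname{div}\boldsymbol r=\operatorname{div}\boldsymbol r_{h}(y)=0$. Testing the first line of \eqref{4.2} with $\boldsymbol v_{h}=\boldsymbol r_{h}(y)$, and combining this with the coercivity of $a_{h}$ (Lemma \ref{lem3.2}), the trace bound $\|\boldsymbol r_{h}(y)\cdot\boldsymbol n\|_{-\frac12,\Gamma}\le\|\boldsymbol r_{h}(y)\|_{0,\Omega}$ (Lemma \ref{lem2.1}) and $y_{d}-\boldsymbol p\cdot\boldsymbol n\in H^{\frac12}(\Gamma)$ (recall $\boldsymbol p\cdot\boldsymbol n=\partial_{n_{A}}y\in H^{\frac12}(\Gamma)$ from $y\in H^{2}(\Omega)$, and $y_{d}\in H^{\frac12}(\Gamma)$), I obtain the uniform bound $\|\boldsymbol r_{h}(y)\|_{0,\Omega}\le C$, hence $\|\boldsymbol r-\boldsymbol r_{h}(y)\|_{0,\Omega}\le C$.

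Next, set $e_{z}=z-z_{h}(y)$ and introduce the dual problem $\boldsymbol\phi=A\nabla\psi$, $\operatorname{div}\boldsymbol\phi=e_{z}$ in $\Omega$, $\psi=0$ on $\Gamma$. Convexity of $\Omega$ yields $\psi\in H^{2}(\Omega)$, $\boldsymbol\phi\in(H^{1}(\Omega))^{2}$ and $\|\boldsymbol\phi\|_{1,\Omega}\le C\|e_{z}\|_{0,\Omega}$, so the interpolant $\boldsymbol\phi_{I}$ of \eqref{3.2} is well defined. Then $\|e_{z}\|_{0,\Omega}^{2}=(e_{z},\operatorname{div}\boldsymbol\phi)=b(e_{z},\boldsymbol\phi-\boldsymbol\phi_{I})+b(e_{z},\boldsymbol\phi_{I})$. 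The commuting relation $\operatorname{div}\boldsymbol\phi_{I}=\Pi_{h}\operatorname{div}\boldsymbol\phi=\Pi_{h}e_{z}$, which follows directly from the degrees of freedom \eqref{3.2}, gives $\operatorname{div}(\boldsymbol\phi-\boldsymbol\phi_{I})=z-\Pi_{h}z$; the $L^{2}$-orthogonality of $\Pi_{h}$ then collapses the first term to $\|z-\Pi_{h}z\|_{0,\Omega}^{2}\le Ch^{2}\|z\|_{1,\Omega}^{2}$ by Lemma \ref{lem4.1}.

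The second term is the main obstacle. Subtracting the first line of \eqref{4.2} from the third line of \eqref{2.6}, both tested with $\boldsymbol\phi_{I}$, gives $b(e_{z},\boldsymbol\phi_{I})=a_{h}(\boldsymbol r_{h}(y),\boldsymbol\phi_{I})-a(\boldsymbol r,\boldsymbol\phi_{I})$, which I split as $[a_{h}(\boldsymbol r_{h}(y),\boldsymbol\phi_{I})-a(\boldsymbol r_{h}(y),\boldsymbol\phi_{I})]+a(\boldsymbol r_{h}(y)-\boldsymbol r,\boldsymbol\phi_{I})$. In the second piece I replace $\boldsymbol\phi_{I}$ by $\boldsymbol\phi$ and use the dual equation with $\operatorname{div}(\boldsymbol r_{h}(y)-\boldsymbol r)=0$ to annihilate the $\boldsymbol\phi$-contribution, leaving $a(\boldsymbol r_{h}(y)-\boldsymbol r,\boldsymbol\phi_{I}-\boldsymbol\phi)\le Ch\|\boldsymbol r-\boldsymbol r_{h}(y)\|_{0,\Omega}\|\boldsymbol\phi\|_{1,\Omega}$. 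In the consistency piece, the polynomial consistency of $a_{h}$ lets me insert $\boldsymbol\Pi_{0}^{0}\boldsymbol r_{h}(y)$ and $\boldsymbol\Pi_{0}^{0}\boldsymbol\phi_{I}$ into both arguments, and continuity (Lemma \ref{lem3.2}) bounds it elementwise by $C\|\boldsymbol r_{h}(y)-\boldsymbol\Pi_{0}^{0}\boldsymbol r_{h}(y)\|_{0,E}\,\|\boldsymbol\phi_{I}-\boldsymbol\Pi_{0}^{0}\boldsymbol\phi_{I}\|_{0,E}$. Here the first factor is merely bounded (the regularity of $\boldsymbol r$ is too weak for a rate), but the second is $O(h\|\boldsymbol\phi\|_{1,E})$ by \eqref{3.4} and Lemma \ref{lem4.1}; this gain is exactly what the duality buys over the inf--sup route. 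Summing over $E$ and using $\|\boldsymbol r_{h}(y)\|_{0,\Omega}\le C$ gives $b(e_{z},\boldsymbol\phi_{I})\le Ch\|\boldsymbol\phi\|_{1,\Omega}\le Ch\|e_{z}\|_{0,\Omega}$.

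Collecting the two terms, I arrive at $\|e_{z}\|_{0,\Omega}^{2}\le Ch^{2}\|z\|_{1,\Omega}^{2}+Ch\|e_{z}\|_{0,\Omega}$, and Young's inequality absorbs the last term into the left-hand side to conclude $\|z-z_{h}(y)\|_{0,\Omega}\le Ch$. The crux of the argument is that the low-regularity flux $\boldsymbol r$ appears only inside bounded quantities and is never interpolated, whereas every power of $h$ is produced by the $H^{1}$-regular dual field $\boldsymbol\phi$.
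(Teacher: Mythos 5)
Your proposal is correct and follows essentially the same route as the paper: an Aubin--Nitsche duality argument with the mixed dual problem, the same three-way decomposition into an interpolation term for $z$, a term pairing the dual flux error with $\boldsymbol r-\boldsymbol r_h(y)$ in which only the uniform boundedness of the latter is used (exploiting $\operatorname{div}\boldsymbol r=\operatorname{div}\boldsymbol r_h(y)=0$), and a VEM consistency term controlled through $\boldsymbol\Pi_0^0$ and the projection estimate \eqref{3.4}. The only differences are cosmetic (you fix the dual datum as $e_z$ rather than taking a supremum over $\xi$, and you spell out the stability bound $\Vert\boldsymbol r_h(y)\Vert_{0,\Omega}\le C$ that the paper uses implicitly).
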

\begin{proof}
Consider the following problem
\begin{equation}\label{4.3}
	\left\{
	\begin{aligned}
		-\text{div}(A\nabla\varphi)&=\xi &&\text{in }\Omega,\\
		\varphi&=0 &&\text{on }\partial\Omega,
	\end{aligned}
	\right.
\end{equation}
where $\xi\in L^{2}(\Omega)$. By the elliptic regularity $\varphi\in H^{1}_{0}(\Omega)\cap H^{2}(\Omega)$ (see \cite{pde}) and
\begin{equation*}
	\Vert\varphi\Vert_{2,\Omega}\le C\Vert \xi\Vert_{0,\Omega}.
\end{equation*}
Then we consider the mixed form of \eqref{4.3}. Find $(\boldsymbol{\psi},\varphi)\in H(\text{div};\ \Omega)\times L^{2}(\Omega)$ such that
\begin{equation*}
	\left\{
	\begin{aligned}
		a(\boldsymbol{\psi},\boldsymbol{v})+b(\varphi,\boldsymbol{v})&=0 &&\forall\boldsymbol{v}\in\boldsymbol{V},\\
		b(w,\boldsymbol{\psi})&=-(\xi,w) &&\forall w\in W,
	\end{aligned}
    \right.
\end{equation*}
and there holds
\begin{equation*}
	\Vert\varphi\Vert_{2,\Omega}+\Vert\boldsymbol{\psi}\Vert_{1,\Omega}\le C\Vert \xi\Vert_{0,\Omega}.
\end{equation*}
Then we have
\begin{equation*}
	\begin{aligned}
		(z-z_{h}(y),\xi)&=-b(z-z_{h}(y),\boldsymbol{\psi})-a(\boldsymbol{\psi},\boldsymbol{r}-\boldsymbol{r}_{h}(y))-b(\varphi,\boldsymbol{r}-\boldsymbol{r}_{h}(y))\\
		&=-b(z-z_{h}(y),\boldsymbol{\psi}-\boldsymbol{\psi}_{I})-a(\boldsymbol{\psi}-\boldsymbol{\psi}_{I},\boldsymbol{r}-\boldsymbol{r}_{h}(y))-b(\varphi-\varphi_{I},\boldsymbol{r}-\boldsymbol{r}_{h}(y))\\
		&\quad-b(z-z_{h}(y),\boldsymbol{\psi}_{I})-a(\boldsymbol{\psi}_{I},\boldsymbol{r}-\boldsymbol{r}_{h}(y))-b(\varphi_{I},\boldsymbol{r}-\boldsymbol{r}_{h}(y))\\
		&=-b(z-z_{h}(y),\boldsymbol{\psi}-\boldsymbol{\psi}_{I})-a(\boldsymbol{\psi}-\boldsymbol{\psi}_{I},\boldsymbol{r}-\boldsymbol{r}_{h}(y))-b(\varphi-\varphi_{I},\boldsymbol{r}-\boldsymbol{r}_{h}(u))\\
		&\quad +a(\boldsymbol{r}_{h}(y),\boldsymbol{\psi}_{I}-\boldsymbol{\Pi}^{0}_{0}\boldsymbol{\psi})+a_{h}(\boldsymbol{r}_{h}(y),\boldsymbol{\Pi}^{0}_{0}\boldsymbol{\psi}-\boldsymbol{\psi}_{I}),
	\end{aligned}
\end{equation*}
where $\boldsymbol{\psi_{I}}$ and $\varphi_{I}$ are interpolations. Following the interpolation error estimates of Lemma \ref{lem4.1}, we have
\begin{equation}\label{4.4}
	\begin{aligned}
		b(z-z_{h}(y),\boldsymbol{\psi}-\boldsymbol{\psi}_{I})&=b(z-z_{I},\boldsymbol{\psi}-\boldsymbol{\psi}_{I})+b(z_{I}-z_{h}(y),\boldsymbol{\psi}-\boldsymbol{\psi}_{I})\\
		&=b(z-z_{I},\boldsymbol{\psi}-\boldsymbol{\psi}_{I})+0\\
		&\le Ch\Vert z\Vert_{1,\Omega}\Vert\boldsymbol{\psi}\Vert_{1,\Omega}\\
		&\le Ch\Vert z\Vert_{1,\Omega}\Vert \xi\Vert_{0,\Omega}
	\end{aligned}
\end{equation}
and
\begin{equation}\label{4.5}
	\begin{aligned}
		a(\boldsymbol{\psi}-\boldsymbol{\psi}_{I},\boldsymbol{r}-\boldsymbol{r}_{h}(y))&\le\Vert\boldsymbol{\psi}-\boldsymbol{\psi}_{I}\Vert_{0,\Omega}\Vert\boldsymbol{r}-\boldsymbol{r}_{h}(y)\Vert_{0,\Omega}\\
		&\le Ch\Vert\boldsymbol{\psi}\Vert_{1,\Omega}\Vert y_{d}-\partial_{n_{A}}y\Vert_{\frac{1}{2},\Gamma}\\
		&\le Ch\Vert \xi\Vert_{0,\Omega}.
	\end{aligned}
\end{equation}
Moreover, from Cauchy-Schwarz inequality, \eqref{3.4} and Lemma \ref{lem4.1}, we have
\begin{equation}\label{4.6}
	\begin{aligned}
		a(\boldsymbol{r}_{h}(y),\boldsymbol{\psi}_{I}-\boldsymbol{\Pi}^{0}_{0}\boldsymbol{\psi})+a_{h}(\boldsymbol{r}_{h}(y),\boldsymbol{\Pi}^{0}_{0}\boldsymbol{\psi}-\boldsymbol{\psi}_{I})&\le C\Vert\boldsymbol{r}_{h}(y)\Vert_{0,\Omega}(\Vert \boldsymbol{\psi}_{I}-\boldsymbol{\psi}\Vert_{0,\Omega}+\Vert\boldsymbol{\psi}-\boldsymbol{\Pi}^{0}_{0}\boldsymbol{\psi}\Vert_{0,\Omega})\\
		&\le Ch\Vert\boldsymbol{\psi}\Vert_{1,\Omega}\Vert y_{d}-\partial_{n_{A}}y\Vert_{\frac{1}{2},\Gamma}\\
		&\le Ch\Vert \xi\Vert_{0,\Omega}.
	\end{aligned}
\end{equation}
Summing up, it follows from \eqref{4.4}-\eqref{4.6} that
\begin{equation*}
	\Vert z-z_{h}(y)\Vert_{0,\Omega}\le\underset{\xi\in L^{2}(\Omega),\ \xi\neq 0}{\text{sup}}\frac{(z-z_{h}(y),\xi)}{\Vert \xi\Vert_{0,\Omega}}\le Ch.
\end{equation*}
\end{proof}
\begin{lemma}\label{lem4.5}
	\textit{Given $E\in\mathcal{T}_{h}$, for any $\boldsymbol{v}_{h}\in\boldsymbol{V}_{h}^{E}$ the following estimate is valid}
	\begin{equation*}
		\Vert\boldsymbol{v}_{h}\cdot\boldsymbol{n}\Vert_{0,\partial E}\le Ch^{-\frac{1}{2}}\Vert\boldsymbol{v}_{h}\Vert_{0,E}.
	\end{equation*}
\end{lemma}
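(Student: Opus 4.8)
The plan is to reduce the estimate to a scale-invariant inequality on a unit-size element and then recover the power $h^{-1/2}$ by a dilation. The first observation is that, by the very definition of the local space $\boldsymbol{V}_h^E$, the normal trace $(\boldsymbol{v}_h\cdot\boldsymbol{n})|_e$ lies in $\mathbb{P}_k(e)$ on every edge $e\in\mathcal{S}_h(E)$; in particular $\boldsymbol{v}_h\cdot\boldsymbol{n}$ is a genuine $L^2(\partial E)$ function, so the left-hand side is well defined. Consequently $\boldsymbol{v}_h\mapsto\|\boldsymbol{v}_h\|_{0,E}$ and $\boldsymbol{v}_h\mapsto\|\boldsymbol{v}_h\cdot\boldsymbol{n}\|_{0,\partial E}$ are, respectively, a norm and a seminorm on the finite-dimensional space $\boldsymbol{V}_h^E$ (the first is a norm because $\boldsymbol{V}_h^E\subset(L^2(E))^2$, so $\|\boldsymbol{v}_h\|_{0,E}=0$ forces $\boldsymbol{v}_h=0$ and hence a vanishing trace).

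Next I would rescale. Let $\boldsymbol{x}_E$ be the center of the ball from Assumption 3.1, introduce the dilated element $\widehat E:=h_E^{-1}(E-\boldsymbol{x}_E)$ of unit diameter, and set $\widehat{\boldsymbol{v}}(\widehat{\boldsymbol{x}}):=\boldsymbol{v}_h(\boldsymbol{x}_E+h_E\widehat{\boldsymbol{x}})$. Because the transformation is a pure dilation it preserves the direction of the outward normal and maps $\boldsymbol{V}_h^E$ onto $\boldsymbol{V}_h^{\widehat E}$, and an elementary change of variables gives
\begin{equation*}
\|\boldsymbol{v}_h\|_{0,E}=h_E\,\|\widehat{\boldsymbol{v}}\|_{0,\widehat E},\qquad
\|\boldsymbol{v}_h\cdot\boldsymbol{n}\|_{0,\partial E}=h_E^{1/2}\,\|\widehat{\boldsymbol{v}}\cdot\widehat{\boldsymbol{n}}\|_{0,\partial\widehat E}.
\end{equation*}
Thus the claimed bound is equivalent to the scale-free inequality $\|\widehat{\boldsymbol{v}}\cdot\widehat{\boldsymbol{n}}\|_{0,\partial\widehat E}\le C\,\|\widehat{\boldsymbol{v}}\|_{0,\widehat E}$ on the unit-diameter element $\widehat E$, since combining the two identities yields $\|\boldsymbol{v}_h\cdot\boldsymbol{n}\|_{0,\partial E}\le C\,h_E^{1/2}\,\|\widehat{\boldsymbol{v}}\|_{0,\widehat E}=C\,h_E^{-1/2}\,\|\boldsymbol{v}_h\|_{0,E}$, which is the asserted estimate with $h=h_E$ understood as the local element size.

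On a single fixed $\widehat E$ the scale-free inequality is immediate: by equivalence of norms on the finite-dimensional space $\boldsymbol{V}_h^{\widehat E}$, the seminorm $\widehat{\boldsymbol{v}}\mapsto\|\widehat{\boldsymbol{v}}\cdot\widehat{\boldsymbol{n}}\|_{0,\partial\widehat E}$ is dominated by the norm $\widehat{\boldsymbol{v}}\mapsto\|\widehat{\boldsymbol{v}}\|_{0,\widehat E}$. The point that requires care, and which I regard as the main obstacle, is that the resulting constant $C$ must be independent of the element, i.e. uniform over the whole family $\mathcal{T}_h$; this is precisely where Assumption 3.1 enters. The star-shapedness with respect to a ball of radius $\rho_E h_E$, together with the lower bound on the edge-to-diameter ratios, confines the rescaled elements $\widehat E$ to a family of unit-diameter shape-regular polygons over which the norm-equivalence constant can be controlled uniformly, by the standard scaling arguments developed in \cite{mvem2}. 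Making this uniformity explicit is the only substantive step; once it is in hand, the two dilation identities above deliver the stated inequality.
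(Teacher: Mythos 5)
Your scaling reduction is sound: the dilation identities $\Vert\boldsymbol{v}_h\Vert_{0,E}=h_E\Vert\widehat{\boldsymbol{v}}\Vert_{0,\widehat E}$ and $\Vert\boldsymbol{v}_h\cdot\boldsymbol{n}\Vert_{0,\partial E}=h_E^{1/2}\Vert\widehat{\boldsymbol{v}}\cdot\widehat{\boldsymbol{n}}\Vert_{0,\partial\widehat E}$ are correct in two dimensions, and a pure dilation does carry $\boldsymbol{V}_h^E$ onto the corresponding space on $\widehat E$. The problem is that everything you have done merely relocates the difficulty to the scale-free inequality on the unit-diameter element, and the mechanism you offer for that inequality --- equivalence of norms on a finite-dimensional space, made uniform over the shape family by compactness --- does not go through for virtual element spaces. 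Unlike classical finite elements, there is no single reference element here: each unit-diameter polygon $\widehat E$ carries its \emph{own} implicitly defined space (its members are characterized through a local div--rot problem, not by explicit shape functions), the dimension itself varies with the number of edges, and the dependence of the space, hence of the best constant in the norm equivalence, on the shape of $\widehat E$ is not continuous in any obvious sense. Asserting that ``the norm-equivalence constant can be controlled uniformly by the standard scaling arguments developed in \cite{mvem2}'' is therefore not a proof; \cite{mvem2} contains no such trace inverse estimate, and you have identified the only substantive step of the lemma and then skipped it.

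The paper closes exactly this gap by splitting the estimate into two pieces for which uniform constants are actually available in the literature: first the one-dimensional polynomial inverse estimate $\Vert\boldsymbol{v}_h\cdot\boldsymbol{n}\Vert_{0,\partial E}\le Ch^{-1/2}\Vert\boldsymbol{v}_h\cdot\boldsymbol{n}\Vert_{-1/2,\partial E}$ (Lemma 2.3 of \cite{interpolat}), which is legitimate because $(\boldsymbol{v}_h\cdot\boldsymbol{n})\vert_e\in\mathbb{P}_k(e)$ is an \emph{explicit} polynomial on each edge and Assumption 3.1 bounds $h_e$ from below in terms of $h_E$; and second the bound $\Vert\boldsymbol{v}_h\cdot\boldsymbol{n}\Vert_{-1/2,\partial E}\le C\Vert\boldsymbol{v}_h\Vert_{0,E}$ (Lemmas 4.2 and 4.3 of \cite{trace}), which exploits the structural facts $\boldsymbol{v}_h\in H(\mathrm{div},E)$ and $\mathrm{div}\,\boldsymbol{v}_h\in\mathbb{P}_k(E)$. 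If you want to keep your scaling framework, you must replace the abstract norm-equivalence appeal by an argument of this type that tracks how the constant depends on $\rho_E$ and $k$ through explicit estimates for the implicitly defined functions; as written, the central inequality is assumed rather than proved.
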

\begin{proof}
	From Lemma 2.3 of \cite{interpolat}, we have
	\begin{equation}\label{4.7}
		\Vert\boldsymbol{v}_{h}\cdot\boldsymbol{n}\Vert_{0,\partial E}\le Ch^{-\frac{1}{2}}\Vert\boldsymbol{v}_{h}\cdot\boldsymbol{n}\Vert_{-\frac{1}{2},\partial E}.
	\end{equation}
Since $\boldsymbol{v}_{h}\in H(\text{div},E)$ and $\text{div }\boldsymbol{v}_{h}\in\mathbb{P}_{k}(E)$, from Lemma 4.2 and 4.3 of \cite{trace} we can get
\begin{equation}\label{4.8}
	\Vert\boldsymbol{v}_{h}\cdot\boldsymbol{n}\Vert_{-\frac{1}{2},\partial E}\le C\Vert\boldsymbol{v}_{h}\Vert_{0,E}.
\end{equation}
Therefore, the Lemma can be followed by above inequalities.
\end{proof}

\begin{lemma}\label{lem4.6}
\textit{Let $\boldsymbol{p}$ be the solution of \eqref{2.6} and $\boldsymbol{p}_{h}(u)$ the solution of the auxiliary problems \eqref{4.1}, respectively. Then}
\begin{equation*}
	\begin{aligned}
		\Vert \boldsymbol{p}\cdot\boldsymbol{n}-\boldsymbol{p}_{h}(u)\cdot\boldsymbol{n}\Vert_{0,\Gamma}&\le Ch^{\frac{1}{2}}\Vert y\Vert_{2,\Omega},\\
		\Vert \boldsymbol{p}\cdot\boldsymbol{n}-\boldsymbol{p}_{h}(u)\cdot\boldsymbol{n}\Vert_{-\frac{1}{2},\Gamma}&\le Ch(\Vert y\Vert_{2,\Omega}+\Vert u\Vert_{1,\Omega}+\Vert f\Vert_{1,\Omega}).\\
	\end{aligned}
\end{equation*}
\end{lemma}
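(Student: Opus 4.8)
The plan is to handle the two bounds by different devices. The $H^{-\frac{1}{2}}(\Gamma)$ estimate is immediate: since $\boldsymbol{p}-\boldsymbol{p}_h(u)\in\boldsymbol{V}=H(\text{div},\Omega)$, the continuity of the normal-trace operator (Lemma \ref{lem2.1}) gives $\Vert(\boldsymbol{p}-\boldsymbol{p}_h(u))\cdot\boldsymbol{n}\Vert_{-\frac{1}{2},\Gamma}\le\Vert\boldsymbol{p}-\boldsymbol{p}_h(u)\Vert_{H(\text{div},\Omega)}$, and Lemma \ref{lem4.3} bounds the right-hand side by $Ch(\Vert y\Vert_{2,\Omega}+\Vert f+u\Vert_{1,\Omega})\le Ch(\Vert y\Vert_{2,\Omega}+\Vert f\Vert_{1,\Omega}+\Vert u\Vert_{1,\Omega})$, which is precisely the second claim.

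For the $L^2(\Gamma)$ estimate I would introduce the interpolant $\boldsymbol{p}_I$ of $\boldsymbol{p}$ and split
\begin{equation*}
\boldsymbol{p}\cdot\boldsymbol{n}-\boldsymbol{p}_h(u)\cdot\boldsymbol{n}=(\boldsymbol{p}-\boldsymbol{p}_I)\cdot\boldsymbol{n}+(\boldsymbol{p}_I-\boldsymbol{p}_h(u))\cdot\boldsymbol{n}.
\end{equation*}
The interpolation part is controlled by summing the edgewise bound of Lemma \ref{lem4.2} over the boundary edges $e\in\mathcal{S}_h(\partial\Omega)$, giving $\Vert(\boldsymbol{p}-\boldsymbol{p}_I)\cdot\boldsymbol{n}\Vert_{0,\Gamma}\le Ch^{\frac{1}{2}}\Vert\boldsymbol{p}\Vert_{1,\Omega}$, and since $\boldsymbol{p}=A\nabla y$ with $A$ constant, $\Vert\boldsymbol{p}\Vert_{1,\Omega}\le C\Vert y\Vert_{2,\Omega}$. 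The discrete part is a virtual element function, so the inverse trace inequality of Lemma \ref{lem4.5}, summed over the elements meeting $\Gamma$, yields $\Vert(\boldsymbol{p}_I-\boldsymbol{p}_h(u))\cdot\boldsymbol{n}\Vert_{0,\Gamma}\le Ch^{-\frac{1}{2}}\Vert\boldsymbol{p}_I-\boldsymbol{p}_h(u)\Vert_{0,\Omega}$. Everything then reduces to the interior bound $\Vert\boldsymbol{p}_I-\boldsymbol{p}_h(u)\Vert_{0,\Omega}\le Ch\Vert y\Vert_{2,\Omega}$, with a constant free of the data $f+u$.

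This interior bound is the main obstacle, and it is exactly where the naive route fails: estimating $\Vert\boldsymbol{p}_I-\boldsymbol{p}_h(u)\Vert_{0,\Omega}\le\Vert\boldsymbol{p}-\boldsymbol{p}_I\Vert_{0,\Omega}+\Vert\boldsymbol{p}-\boldsymbol{p}_h(u)\Vert_{0,\Omega}$ and calling on Lemma \ref{lem4.3} would reintroduce the $\Vert f+u\Vert_{1,\Omega}$ term and only deliver $Ch^{\frac{1}{2}}(\Vert y\Vert_{2,\Omega}+\Vert f+u\Vert_{1,\Omega})$. The remedy is to notice that $\boldsymbol{e}_h:=\boldsymbol{p}_I-\boldsymbol{p}_h(u)$ is divergence-free: the commuting property $\text{div}\,\boldsymbol{p}_I=\Pi_h\,\text{div}\,\boldsymbol{p}$ (which follows by integrating by parts in the interpolation conditions \eqref{3.2} and matching the degrees of freedom) gives $\text{div}\,\boldsymbol{p}_I=-\Pi_h(f+u)$, while the discrete constraint $b(w_h,\boldsymbol{p}_h(u))=-(f+u,w_h)$ forces $\text{div}\,\boldsymbol{p}_h(u)=-\Pi_h(f+u)$ as well, so $\text{div}\,\boldsymbol{e}_h=0$. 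Hence $b(y,\boldsymbol{e}_h)=b(y_h(u),\boldsymbol{e}_h)=0$, and subtracting the discrete state equation of \eqref{4.1} from the continuous one in \eqref{2.6}, both tested with $\boldsymbol{e}_h\in\boldsymbol{V}_h$, leaves $a_h(\boldsymbol{p}_h(u),\boldsymbol{e}_h)=\langle g,\boldsymbol{e}_h\cdot\boldsymbol{n}\rangle=a(\boldsymbol{p},\boldsymbol{e}_h)$.

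Finally I would invoke the coercivity of $a_h$ (Lemma \ref{lem3.2}) together with the identity just obtained,
\begin{equation*}
\alpha\Vert\boldsymbol{e}_h\Vert_{0,\Omega}^2\le a_h(\boldsymbol{e}_h,\boldsymbol{e}_h)=a_h(\boldsymbol{p}_I,\boldsymbol{e}_h)-a(\boldsymbol{p},\boldsymbol{e}_h),
\end{equation*}
and estimate the consistency error on the right by the usual element-by-element argument: inserting the piecewise-constant projection $\boldsymbol{\Pi}_0^0\boldsymbol{p}$, using the consistency identity $a_h^E(\boldsymbol{\Pi}_0^0\boldsymbol{p},\boldsymbol{e}_h)=a^E(\boldsymbol{\Pi}_0^0\boldsymbol{p},\boldsymbol{e}_h)$, the continuity of $a_h^E$ and $a^E$, the projection bound \eqref{3.4} and the interpolation bound of Lemma \ref{lem4.1}, I expect $a_h(\boldsymbol{p}_I,\boldsymbol{e}_h)-a(\boldsymbol{p},\boldsymbol{e}_h)\le Ch\Vert\boldsymbol{p}\Vert_{1,\Omega}\Vert\boldsymbol{e}_h\Vert_{0,\Omega}$. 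Dividing by $\Vert\boldsymbol{e}_h\Vert_{0,\Omega}$ gives $\Vert\boldsymbol{e}_h\Vert_{0,\Omega}\le Ch\Vert\boldsymbol{p}\Vert_{1,\Omega}\le Ch\Vert y\Vert_{2,\Omega}$; combined with the inverse trace inequality this controls the discrete part by $Ch^{\frac{1}{2}}\Vert y\Vert_{2,\Omega}$, and together with the interpolation part it closes the first estimate. The only genuinely delicate ingredient is the divergence-free observation; the rest is an assembly of the interpolation, trace, and consistency bounds already available.
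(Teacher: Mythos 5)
Your proof is correct, and its overall architecture --- the splitting through the interpolant $\boldsymbol{p}_I$, Lemma \ref{lem4.2} for the interpolation part, the inverse trace inequality of Lemma \ref{lem4.5} for the discrete part, and Lemma \ref{lem2.1} plus Lemma \ref{lem4.3} for the $H^{-\frac{1}{2}}(\Gamma)$ bound --- coincides with the paper's. The one place you diverge is the interior bound $\Vert\boldsymbol{p}_I-\boldsymbol{p}_h(u)\Vert_{0,\Omega}\le Ch\Vert y\Vert_{2,\Omega}$: the paper takes exactly the ``naive route'' you reject, writing $\Vert\boldsymbol{p}_I-\boldsymbol{p}_h(u)\Vert_{0,\Omega}\le\Vert\boldsymbol{p}_I-\boldsymbol{p}\Vert_{0,\Omega}+\Vert\boldsymbol{p}-\boldsymbol{p}_h(u)\Vert_{0,\Omega}$ and passing directly to $Ch\Vert y\Vert_{2,\Omega}$. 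Your worry is legitimate if one only has the $H(\mathrm{div},\Omega)$-norm bound stated in Lemma \ref{lem4.3}, which carries the extra $\Vert f+u\Vert_{1,\Omega}$; the paper is implicitly invoking the sharper $L^{2}$-only flux estimate $\Vert\boldsymbol{p}-\boldsymbol{p}_h(u)\Vert_{0,\Omega}\le Ch\vert\boldsymbol{p}\vert_{1,\Omega}$ from the mixed VEM literature, which does not see the divergence data. What your commuting-diagram argument ($\mathrm{div}\,\boldsymbol{p}_I=\Pi_h\,\mathrm{div}\,\boldsymbol{p}=\mathrm{div}\,\boldsymbol{p}_h(u)$, hence $\boldsymbol{e}_h$ divergence-free, then coercivity plus the consistency of $a_h$) buys you is precisely a self-contained re-derivation of that sharper estimate, so your version closes a small gap that the paper papers over by citation. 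All the individual steps you list --- the integration by parts behind the commuting property, the identity $a_h(\boldsymbol{p}_h(u),\boldsymbol{e}_h)=a(\boldsymbol{p},\boldsymbol{e}_h)$, and the insertion of $\boldsymbol{\Pi}_0^0\boldsymbol{p}$ in the consistency term --- are sound.
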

\begin{proof}
Let $\boldsymbol{p}_{I}=\boldsymbol{\Pi}_{h}\boldsymbol{p}\in\boldsymbol{V}_{h}$ be the interpolation of $\boldsymbol{p}$ defined in \eqref{3.2}, we have
\begin{equation*}
		\Vert \boldsymbol{p}\cdot\boldsymbol{n}-\boldsymbol{p}_{h}(u)\cdot\boldsymbol{n}\Vert_{0,\Gamma}\le \Vert \boldsymbol{p}\cdot\boldsymbol{n}-\boldsymbol{p}_{I}\cdot\boldsymbol{n}\Vert_{0,\Gamma}+\Vert\boldsymbol{p}_{I}\cdot\boldsymbol{n}-\boldsymbol{p}_{h}(u)\cdot\boldsymbol{n}\Vert_{0,\Gamma}.
\end{equation*}
For the first term of right-hand, from Lemma \ref{4.2}, we have
\begin{equation}\label{4.9}
	\begin{aligned}
		\Vert \boldsymbol{p}\cdot\boldsymbol{n}-\boldsymbol{p}_{I}\cdot\boldsymbol{n}\Vert_{0,\Gamma}&\le C\bigg(\sum_{e\in\mathcal{S}_{h}(\Gamma)}\Vert\boldsymbol{p}\cdot\boldsymbol{n}_{e}-\boldsymbol{p}_{I}\cdot\boldsymbol{n}_{e}\Vert_{0,e}^{2}\bigg)^{\frac{1}{2}}\\
		&\le Ch^{\frac{1}{2}}\bigg(\sum_{\mathcal{S}_{h}(E)\cap\mathcal{S}_{h}(\Gamma)\neq\emptyset}\Vert y\Vert_{2,E}^{2}\bigg)^{\frac{1}{2}}\\
		&\le Ch^{\frac{1}{2}}\Vert y\Vert_{2,\Omega}.
	\end{aligned}
\end{equation}
Futhermore, by using Lemma \ref{4.5}, we prove that
\begin{equation}\label{4.10}
	\begin{aligned}
		\Vert\boldsymbol{p}_{I}\cdot\boldsymbol{n}-\boldsymbol{p}_{h}(u)\cdot\boldsymbol{n}\Vert_{0,\Gamma}&\le C\bigg(\sum_{e\in\mathcal{S}_{h}(\Gamma)}\Vert\boldsymbol{p}_{I}\cdot\boldsymbol{n}_{e}-\boldsymbol{p}_{h}(u)\cdot\boldsymbol{n}_{e}\Vert_{0,e}^{2}\bigg)^{\frac{1}{2}}\\
		&\le Ch^{-\frac{1}{2}}\bigg(\sum_{\mathcal{S}_{h}(E)\cap\mathcal{S}_{h}(\Gamma)\neq\emptyset}\Vert\boldsymbol{p}_{I}-\boldsymbol{p}_{h}(u)\Vert_{0,E}^{2}\bigg)^{\frac{1}{2}}\\
		&\le Ch^{-\frac{1}{2}}\Vert\boldsymbol{p}_{I}-\boldsymbol{p}_{h}(u)\Vert_{0,\Omega}\\
		&\le Ch^{-\frac{1}{2}}(\Vert\boldsymbol{p}_{I}-\boldsymbol{p}\Vert_{0,\Omega}+\Vert\boldsymbol{p}-\boldsymbol{p}_{h}(u)\Vert_{0,\Omega})\\
		&\le Ch^{\frac{1}{2}}\Vert y\Vert_{2,\Omega}.
	\end{aligned}
\end{equation}
From \eqref{4.9}-\eqref{4.10}, we conclude that
\begin{equation}\label{4.11}
	\Vert \boldsymbol{p}\cdot\boldsymbol{n}-\boldsymbol{p}_{h}(u)\cdot\boldsymbol{n}\Vert_{0,\Gamma}\le Ch^{\frac{1}{2}}\Vert y\Vert_{2,\Omega}.
\end{equation}
For the estimate of $\Vert\boldsymbol{p}\cdot\boldsymbol{n}-\boldsymbol{p}_{h}(u)\cdot\boldsymbol{n}\Vert_{-\frac{1}{2},\Gamma}$, it follows from Lemma \ref{lem2.1} and \ref{lem4.3}
\begin{equation}\label{4.12}
	\begin{aligned}
		\Vert\boldsymbol{p}\cdot\boldsymbol{n}-\boldsymbol{p}_{h}(u)\cdot\boldsymbol{n}\Vert_{-\frac{1}{2},\Gamma}&\le\Vert\boldsymbol{p}-\boldsymbol{p}_{h}(u)\Vert_{H(\text{div}, \Omega)}\\
		&\le Ch(\Vert y\Vert_{2,\Omega}+\Vert u\Vert_{1,\Omega}+\Vert f\Vert_{1,\Omega}).
	\end{aligned}
\end{equation}
	This completes the proof.
	\end{proof}
Next, we consider the approximation of the objective functional $J$. Let
\begin{equation*}
J_{h}'(u)(w)=(\gamma u+z_{h}(u),w).
\end{equation*}
where $z_{h}(u)$ is the solution of the auxiliary problem \eqref{4.1}. Then, we have the following lemma.
	\begin{lemma}\label{lem4.7}
		\textit{Let $J_{h}$ be defined in \eqref{3.8}, then we have}
		\begin{equation*}
			J_{h}'(v)(v-u)-J_{h}'(u)(v-u)= \gamma\Vert v-u\Vert_{0,\Omega}^{2}+\Vert\boldsymbol{p}_{h}(v)\cdot\boldsymbol{n}-\boldsymbol{p}_{h}(u)\cdot\boldsymbol{n} \Vert_{0,\Gamma}^{2}.
		\end{equation*}
	\end{lemma}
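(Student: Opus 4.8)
The plan is to reduce everything to the four linear ``difference'' relations obtained by subtracting the auxiliary systems \eqref{4.1} written for the two controls $v$ and $u$. First I would expand the definition $J_h'(w)(\cdot)=(\gamma w+z_h(w),\cdot)$ to get
\[
J_h'(v)(v-u)-J_h'(u)(v-u)=\gamma\Vert v-u\Vert_{0,\Omega}^2+(z_h(v)-z_h(u),v-u),
\]
so the entire content of the lemma is the identity $(z_h(v)-z_h(u),v-u)=\Vert \boldsymbol{p}_h(v)\cdot\boldsymbol{n}-\boldsymbol{p}_h(u)\cdot\boldsymbol{n}\Vert_{0,\Gamma}^2$. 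Writing $\boldsymbol{P}:=\boldsymbol{p}_h(v)-\boldsymbol{p}_h(u)$, $Y:=y_h(v)-y_h(u)$, $\boldsymbol{R}:=\boldsymbol{r}_h(v)-\boldsymbol{r}_h(u)$ and $Z:=z_h(v)-z_h(u)$, the linearity of \eqref{4.1} yields $a_h(\boldsymbol{P},\boldsymbol{v}_h)+b(Y,\boldsymbol{v}_h)=0$ and $b(w_h,\boldsymbol{P})=-(v-u,w_h)$ for the state difference, together with $a_h(\boldsymbol{R},\boldsymbol{v}_h)+b(Z,\boldsymbol{v}_h)=-\langle\boldsymbol{P}\cdot\boldsymbol{n},\boldsymbol{v}_h\cdot\boldsymbol{n}\rangle$ and $b(w_h,\boldsymbol{R})=0$ for the adjoint difference (the inhomogeneous data $g$ in the state and $y_d$ in the adjoint cancel upon subtraction, leaving only $-\boldsymbol{P}\cdot\boldsymbol{n}$).

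The core computation chains these relations together. Testing the second state relation with $w_h=Z\in W_h$ gives $(v-u,Z)=-b(Z,\boldsymbol{P})$. Testing the first adjoint relation with $\boldsymbol{v}_h=\boldsymbol{P}\in\boldsymbol{V}_h$ gives $a_h(\boldsymbol{R},\boldsymbol{P})+b(Z,\boldsymbol{P})=-\Vert\boldsymbol{P}\cdot\boldsymbol{n}\Vert_{0,\Gamma}^2$, hence $b(Z,\boldsymbol{P})=-\Vert\boldsymbol{P}\cdot\boldsymbol{n}\Vert_{0,\Gamma}^2-a_h(\boldsymbol{R},\boldsymbol{P})$. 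Combining the two displays yields $(v-u,Z)=\Vert\boldsymbol{P}\cdot\boldsymbol{n}\Vert_{0,\Gamma}^2+a_h(\boldsymbol{R},\boldsymbol{P})$, so it only remains to show that the cross term $a_h(\boldsymbol{R},\boldsymbol{P})$ vanishes.

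The cross term is disposed of by the standard mixed ``orthogonality'' argument, which is the single step that requires choosing the test functions correctly. Testing the first state relation with $\boldsymbol{v}_h=\boldsymbol{R}\in\boldsymbol{V}_h$ gives $a_h(\boldsymbol{P},\boldsymbol{R})=-b(Y,\boldsymbol{R})$, while testing the second adjoint relation with $w_h=Y\in W_h$ gives $b(Y,\boldsymbol{R})=0$; since $a_h$ is symmetric (both $a^E$ and the stabilization $S^E$ in \eqref{3.5} are symmetric) we conclude $a_h(\boldsymbol{R},\boldsymbol{P})=a_h(\boldsymbol{P},\boldsymbol{R})=0$. Substituting back gives $(z_h(v)-z_h(u),v-u)=\Vert\boldsymbol{P}\cdot\boldsymbol{n}\Vert_{0,\Gamma}^2$, which is exactly the claimed identity. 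The only delicate point is matching each difference field to its admissible test slot so that the discrete divergence constraints $b(\cdot,\boldsymbol{P})$ and $b(\cdot,\boldsymbol{R})$ cancel precisely; because $Y,Z\in W_h$ and $\boldsymbol{P},\boldsymbol{R}\in\boldsymbol{V}_h$, every substitution is legitimate and the symmetry of $a_h$ closes the argument.
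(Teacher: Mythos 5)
Your proposal is correct and takes essentially the same route as the paper's proof: expand $J_h'$ to isolate $(z_h(v)-z_h(u),v-u)$, subtract the two auxiliary systems \eqref{4.1}, test the resulting difference equations with $z_h(v)-z_h(u)$, $\boldsymbol{p}_h(v)-\boldsymbol{p}_h(u)$, $\boldsymbol{r}_h(v)-\boldsymbol{r}_h(u)$ and $y_h(v)-y_h(u)$, and use the symmetry of $a_h$ to eliminate the cross term $a_h(\boldsymbol{r}_h(v)-\boldsymbol{r}_h(u),\boldsymbol{p}_h(v)-\boldsymbol{p}_h(u))$. The only cosmetic difference is that you spell out the vanishing of this cross term as a separate step, whereas the paper folds it into a single chain of equalities.
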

	\begin{proof}
		Note that
			\begin{equation*}
				J_{h}'(v)(v-u)=(\gamma v+z_{h}(v),v-u).
			\end{equation*}
		Then, we have
\begin{equation}\label{4.13}
	\begin{aligned}
		J_{h}'(v)(v-u)-J_{h}'(u)(v-u)&=(\gamma v+z_{h}(v),v-u)-(\gamma u+z_{h}(u),v-u)\\
		&=\gamma\Vert v-u\Vert_{0,\Omega}^{2}+(z_{h}(v)-z_{h}(u),v-u).
	\end{aligned}
\end{equation}
From \eqref{4.1}, we obtain
\begin{equation}\label{4.14}
	\begin{aligned}
		(z_{h}(v)-z_{h}(u),v-u)&=-b(z_{h}(v)-z_{h}(u),\boldsymbol{p}_{h}(v)-\boldsymbol{p}_{h}(u))\\
		&=\langle\boldsymbol{p}_{h}(v)\cdot\boldsymbol{n}-\boldsymbol{p}_{h}(u)\cdot\boldsymbol{n},\boldsymbol{p}_{h}(v)\cdot\boldsymbol{n}-\boldsymbol{p}_{h}(u)\cdot\boldsymbol{n}\rangle+a_{h}(\boldsymbol{r}_{h}(v)-\boldsymbol{r}_{h}(u),\boldsymbol{p}_{h}(v)-\boldsymbol{p}_{h}(u))\\
		&=\Vert(\boldsymbol{p}_{h}(v)-\boldsymbol{p}_{h}(u))\cdot\boldsymbol{n} \Vert_{0,\Gamma}^{2}-b(y_{h}(v)-y_{h}(u),\boldsymbol{r}_{h}(v)-\boldsymbol{r}_{h}(u))\\
		&=\Vert(\boldsymbol{p}_{h}(v)-\boldsymbol{p}_{h}(u))\cdot\boldsymbol{n} \Vert_{0,\Gamma}^{2}.
	\end{aligned}
\end{equation}
Therefore, the Lemma can be followed by \eqref{4.13}-\eqref{4.14}.
	\end{proof}

\begin{theorem}
	\textit{Let $(\boldsymbol{p},y,\boldsymbol{r},z,u)\in\boldsymbol{V}\times W\times\boldsymbol{V}\times W\times U_{ad}$ be the solution of \eqref{2.6}, and $(\boldsymbol{p}_{h},y_{h},\boldsymbol{r}_{h},z_{h},u_{h})\in\boldsymbol{V}_{h}\times W_{h}\times\boldsymbol{V}_{h}\times W_{h}\times U_{ad}$ be the solution of the discretized problem \eqref{3.10}. Then we have}
	\begin{equation*}
		\Vert y-y_{h}\Vert_{0,\Omega}+ \Vert z-z_{h}\Vert_{0,\Omega}+\Vert u-u_{h}\Vert_{0,\Omega}\le Ch.
	\end{equation*}
\end{theorem}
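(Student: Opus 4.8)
The plan is to bound the control error $\|u-u_h\|_{0,\Omega}$ first and then transfer its rate to the state and adjoint variables by discrete stability. The engine is the coercivity identity of Lemma~\ref{lem4.7} together with the first-order conditions \eqref{2.6} and \eqref{3.10}, which I write as $J'(u)(v-u)\ge 0$ and $J_h'(u_h)(v-u_h)\ge 0$ for all $v\in U_{ad}$, where $J'(u)(w)=(\gamma u+z,w)$ and $J_h'(u)(w)=(\gamma u+z_h(u),w)$. Testing the continuous inequality with $v=u_h$ and the discrete one with $v=u$, and applying Lemma~\ref{lem4.7} to the pair $(u_h,u)$, I obtain
\[
\gamma\|u-u_h\|_{0,\Omega}^{2}\le J_h'(u_h)(u_h-u)-J_h'(u)(u_h-u).
\]
Discarding the nonnegative boundary contribution of Lemma~\ref{lem4.7}, using $J_h'(u_h)(u_h-u)\le 0$ and $J'(u)(u-u_h)\le 0$, the $\gamma u$ terms cancel and, by the definitions of $J'$ and $J_h'$, there remains
\[
\gamma\|u-u_h\|_{0,\Omega}^{2}\le\bigl(z_h(u)-z,\,u-u_h\bigr)\le\|z-z_h(u)\|_{0,\Omega}\,\|u-u_h\|_{0,\Omega},
\]
so that $\gamma\|u-u_h\|_{0,\Omega}\le\|z-z_h(u)\|_{0,\Omega}$. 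The whole estimate thus reduces to proving $\|z-z_h(u)\|_{0,\Omega}\le Ch$.

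For the adjoint consistency error I would split $\|z-z_h(u)\|_{0,\Omega}\le\|z-z_h(y)\|_{0,\Omega}+\|z_h(y)-z_h(u)\|_{0,\Omega}$, the first term being $\le Ch$ by Lemma~\ref{lem4.4}. For the second, the pair $(\boldsymbol{r}_h(y)-\boldsymbol{r}_h(u),\,z_h(y)-z_h(u))$ solves the discrete mixed system driven only by the boundary mismatch $\langle(\boldsymbol{p}_h(u)-\boldsymbol{p})\cdot\boldsymbol{n},\boldsymbol{v}_h\cdot\boldsymbol{n}\rangle$. I would run an Aubin--Nitsche duality exactly as in Lemma~\ref{lem4.4}: for $\xi\in L^2(\Omega)$ solve the dual mixed problem \eqref{4.3} to get $(\boldsymbol{\psi},\varphi)$ with $\|\boldsymbol{\psi}\|_{1,\Omega}+\|\varphi\|_{2,\Omega}\le C\|\xi\|_{0,\Omega}$, write $(z_h(y)-z_h(u),\xi)=-b(z_h(y)-z_h(u),\boldsymbol{\psi})$, annihilate $b(\cdot,\boldsymbol{\psi}-\boldsymbol{\psi}_I)$ by the commuting property of the interpolant, and use the difference equation to reach
\[
(z_h(y)-z_h(u),\xi)=-\langle(\boldsymbol{p}_h(u)-\boldsymbol{p})\cdot\boldsymbol{n},\boldsymbol{\psi}_I\cdot\boldsymbol{n}\rangle+a_h(\boldsymbol{r}_h(y)-\boldsymbol{r}_h(u),\boldsymbol{\psi}_I).
\]
Splitting $\boldsymbol{\psi}_I=\boldsymbol{\psi}+(\boldsymbol{\psi}_I-\boldsymbol{\psi})$, the smooth part is controlled by the sharp $\|(\boldsymbol{p}-\boldsymbol{p}_h(u))\cdot\boldsymbol{n}\|_{-\frac12,\Gamma}\le Ch$ of Lemma~\ref{lem4.6} against $\|\boldsymbol{\psi}\cdot\boldsymbol{n}\|_{\frac12,\Gamma}\le C\|\xi\|_{0,\Omega}$, and the interpolation part by $\|(\boldsymbol{p}-\boldsymbol{p}_h(u))\cdot\boldsymbol{n}\|_{0,\Gamma}\le Ch^{\frac12}$ (Lemma~\ref{lem4.6}) times $\|(\boldsymbol{\psi}-\boldsymbol{\psi}_I)\cdot\boldsymbol{n}\|_{0,\Gamma}\le Ch^{\frac12}\|\xi\|_{0,\Omega}$ (Lemma~\ref{lem4.2}); both are $O(h)\|\xi\|_{0,\Omega}$. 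The remaining term is handled by subtracting the vanishing quantity $a(\boldsymbol{\psi},\boldsymbol{r}_h(y)-\boldsymbol{r}_h(u))=0$ (valid since $\boldsymbol{r}_h(y)-\boldsymbol{r}_h(u)$ is divergence free) and invoking the consistency of $a_h$, turning $a_h(\boldsymbol{r}_h(y)-\boldsymbol{r}_h(u),\boldsymbol{\psi}_I)$ into $O(h)\,\|\boldsymbol{r}_h(y)-\boldsymbol{r}_h(u)\|_{0,\Omega}\|\xi\|_{0,\Omega}$; a plain energy estimate using Lemmas~\ref{lem3.2}, \ref{lem4.5} and \ref{lem4.6} then shows the factor $\|\boldsymbol{r}_h(y)-\boldsymbol{r}_h(u)\|_{0,\Omega}$ stays bounded. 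Dividing by $\|\xi\|_{0,\Omega}$ and taking the supremum gives $\|z_h(y)-z_h(u)\|_{0,\Omega}\le Ch$, hence $\|u-u_h\|_{0,\Omega}\le Ch$.

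Finally I would transfer the rate. Writing $y_h=y_h(u_h)$, $z_h=z_h(u_h)$, I split $\|y-y_h\|_{0,\Omega}\le\|y-y_h(u)\|_{0,\Omega}+\|y_h(u)-y_h(u_h)\|_{0,\Omega}$ and $\|z-z_h\|_{0,\Omega}\le\|z-z_h(u)\|_{0,\Omega}+\|z_h(u)-z_h(u_h)\|_{0,\Omega}$. Lemma~\ref{lem4.3} and the bound just obtained dispose of the first summands. The differences are driven by $u-u_h$: Brezzi stability (Lemmas~\ref{lem3.2}, \ref{lem3.3}) gives $\|y_h(u)-y_h(u_h)\|_{0,\Omega}\le C\|u-u_h\|_{0,\Omega}$ and $\|\boldsymbol{p}_h(u)-\boldsymbol{p}_h(u_h)\|_{H(\text{div},\Omega)}\le C\|u-u_h\|_{0,\Omega}$, and one further duality argument of the same structure as above---now with boundary data $(\boldsymbol{p}_h(u)-\boldsymbol{p}_h(u_h))\cdot\boldsymbol{n}$, whose $H^{-1/2}(\Gamma)$ norm is $\le C\|u-u_h\|_{0,\Omega}$ by Lemma~\ref{lem2.1}---yields $\|z_h(u)-z_h(u_h)\|_{0,\Omega}\le C\|u-u_h\|_{0,\Omega}$. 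Collecting the estimates proves the claim.

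The hard part is the adjoint step. Because the regularity is only $z\in H^1(\Omega)$, a direct $L^2$ estimate through the boundary loses half a power of $h$ (the discrete normal trace costs $h^{-1/2}$ by Lemma~\ref{lem4.5}), so the optimal order can be recovered only by playing the sharp $H^{-1/2}(\Gamma)$ flux bound of Lemma~\ref{lem4.6} against the $H^{1/2}(\Gamma)$ trace of the smooth dual solution inside the duality argument, while simultaneously absorbing the discrete mismatch between the exact and discrete boundary data that distinguishes $z_h(y)$ from $z_h(u)$.
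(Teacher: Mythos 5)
Your proposal is correct and follows essentially the same route as the paper: the coercivity identity of Lemma \ref{lem4.7} combined with the two variational inequalities reduces everything to $\Vert z-z_{h}(u)\Vert_{0,\Omega}$, which is split through $z_{h}(y)$ and handled by Lemma \ref{lem4.4} plus the same $H^{-\frac{1}{2}}(\Gamma)$/$H^{\frac{1}{2}}(\Gamma)$ duality pairing of Lemma \ref{lem4.6}, followed by the identical stability and duality transfers to $y_{h}$ and $z_{h}$. The only (harmless) deviations are that you test the dual problem with the interpolant $\boldsymbol{\psi}_{I}$ rather than the discrete dual solution $\boldsymbol{\sigma}_{h}$ (forcing the extra consistency manipulation already used in Lemma \ref{lem4.4}), and you bound $\Vert z_{h}(u)-z_{h}\Vert_{0,\Omega}$ by $C\Vert u-u_{h}\Vert_{0,\Omega}$ instead of invoking the boundary flux bound \eqref{4.25} directly.
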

\begin{proof}
According to inf-sup condition of Lemma \ref{lem3.3}, we have
\begin{equation}\label{4.15}
	\begin{aligned}
		\beta\Vert y_{h}(u)-y_{h}\Vert_{0,\Omega}&\le\underset{\boldsymbol{v}_{h}\in\boldsymbol{V}_{h}}{\text{sup}}\frac{b(y_{h}(u)-y_{h},\boldsymbol{v}_{h})}{\Vert\boldsymbol{v}_{h}\Vert_{0,\Omega}}\\
		&=\underset{\boldsymbol{v}_{h}\in\boldsymbol{V}_{h}}{\text{sup}}\frac{a_{h}(\boldsymbol{p}_{h}-\boldsymbol{p}_{h}(u),\boldsymbol{v}_{h})  }{\Vert\boldsymbol{v}_{h}\Vert_{0,\Omega}}\\
		&\le \Vert\boldsymbol{p}_{h}-\boldsymbol{p}_{h}(u)\Vert_{0,\Omega}.
	\end{aligned}
\end{equation}
From Lemma \ref{lem3.2}, we have
\begin{equation}\label{4.16}
	\begin{aligned}
		\alpha\Vert\boldsymbol{p}_{h}-\boldsymbol{p}_{h}(u)\Vert_{0,\Omega}^{2}&\le \vert a_{h}(\boldsymbol{p}_{h}-\boldsymbol{p}_{h}(u),\boldsymbol{p}_{h}-\boldsymbol{p}_{h}(u))\vert\\
		&= \vert b(y_{h}(u)-y_{h},\boldsymbol{p}_{h}-\boldsymbol{p}_{h}(u))\vert\\
		&= \vert (u-u_{h},y_{h}(u)-y_{h})\vert\\
		&\le \Vert u-u_{h}\Vert_{0,\Omega}\Vert y_{h}(u)-y_{h}\Vert_{0,\Omega}.
	\end{aligned}
\end{equation}
Combining above inequalities, we obtain
\begin{equation}\label{4.17}
	\Vert y_{h}(u)-y_{h}\Vert_{0,\Omega}\le C\Vert u-u_{h}\Vert_{0,\Omega}.
\end{equation}
From Lemma \ref{lem4.7}, we have
\begin{equation}\label{4.18}
	\begin{aligned}
	\gamma\Vert u-u_{h}\Vert_{0,\Omega}^{2}+\Vert\boldsymbol{p}_{h}(u)\cdot\boldsymbol{n}-\boldsymbol{p}_{h}\cdot\boldsymbol{n} \Vert_{0,\Gamma}^{2}&=J_{h}'(u)(u-u_{h})-J_{h}'(u_{h})(u-u_{h})\\
	&=(\gamma u+z_{h}(u),u-u_{h})-(\gamma u_{h}+z_{h},u-u_{h})\\
	&=(\gamma u+z,u-u_{h})-(z-z_{h}(u),u-u_{h})-(\gamma u_{h}+z_{h},u-u_{h}).\\
	\end{aligned}
\end{equation}
Since the control variable adopts the variational discretization, we have
\begin{equation*}
	(\gamma u+z,u-u_{h})\le 0
\end{equation*}
and
\begin{equation*}
	(\gamma u_{h}+z_{h},u-u_{h})\le 0.
\end{equation*}
Then, from \eqref{4.18} and Young inequality, we conclude that
\begin{equation}\label{4.19}
	\begin{aligned}
		\gamma\Vert u-u_{h}\Vert_{0,\Omega}^{2}+\Vert\boldsymbol{p}_{h}(u)\cdot\boldsymbol{n}-\boldsymbol{p}_{h}\cdot\boldsymbol{n} \Vert_{0,\Gamma}^{2}&\le (z_{h}(u)-z,u-u_{h})\\
		&=(z_{h}(u)-z_{h}(y),u-u_{h})+(z_{h}(y)-z,u-u_{h})\\
		&\le C(\varepsilon,\delta)\Vert z_{h}(u)-z_{h}(y)\Vert_{0,\Omega}^2+C(\varepsilon,\delta)\Vert z_{h}(y)-z\Vert_{0,\Omega}^2\\
		&\quad+\varepsilon\Vert u-u_{h}\Vert_{0,\Omega},
	\end{aligned}
\end{equation}
where $\varepsilon,\delta$ denote arbitrary small positive numbers.

For first term of the right-hand of \eqref{4.19}, we use a duality argument. Let $(\boldsymbol{\sigma},\zeta)\in H(\mbox{div}, \Omega)\times L^{2}(\Omega)$ satisfy
\begin{equation*}
	\left\{
	\begin{aligned}
		a(\boldsymbol{\sigma},\boldsymbol{v})+b(\zeta,\boldsymbol{v})&=0  &&\forall \boldsymbol{v}\in\boldsymbol{V},\\
		b(w,\boldsymbol{\sigma})&=(z_{h}(u)-z_{h}(y),w) &&\forall w\in W.\\
	\end{aligned}
    \right.
\end{equation*}
From the regularity theorey, we have
\begin{equation}\label{4.20}
	\Vert\zeta\Vert_{2,\Omega}+\Vert\boldsymbol{\sigma}\Vert_{1,\Omega}\le C\Vert z_{h}(u)-z_{h}(y)\Vert_{0,\Omega}.
\end{equation}
Let $(\boldsymbol{\sigma}_{h},\zeta_{h})\in\boldsymbol{V}_{h}\times W_{h}$ be the mixed virtual element approximation of $(\boldsymbol{\sigma},\zeta)$ such that
\begin{equation}\label{4.21}
	\left\{
	\begin{aligned}
		a_{h}(\boldsymbol{\sigma}_{h},\boldsymbol{v}_{h})+b(\zeta_{h},\boldsymbol{v}_{h})&=0 &&\forall\boldsymbol{v}_{h}\in\boldsymbol{V}_{h},\qquad\qquad\\
		b(w_{h},\boldsymbol{\sigma}_{h})&=(z_{h}(u)-z_{h}(y),w_{h}) &&\forall w_{h}\in W_{h}.
	\end{aligned}
\right.
\end{equation}
Similar to Lemma \ref{lem4.6}, we can prove
\begin{equation}\label{4.22}
	\Vert(\boldsymbol{\sigma}-\boldsymbol{\sigma}_{h})\cdot \boldsymbol{n}\Vert_{0,\Gamma}\le Ch^{\frac{1}{2}}\Vert\zeta\Vert_{2,\Omega}.
\end{equation}
From Lemma \ref{lem4.6}, \eqref{4.1} and \eqref{4.21}, we have
\begin{equation}\label{4.23}
	\begin{aligned}
		\Vert z_{h}(u)-z_{h}(y)\Vert_{0,\Omega}^{2}&=b(z_{h}(u)-z_{h}(y),\boldsymbol{\sigma}_{h})\\
		&=-a_{h}(\boldsymbol{r}_{h}(u)-\boldsymbol{r}_{h}(y),\boldsymbol{\sigma}_{h})+\langle(\boldsymbol{p}-\boldsymbol{p}_{h}(u))\cdot\boldsymbol{n},\boldsymbol{\sigma}_{h}\cdot \boldsymbol{n}\rangle\\
		&=b(\zeta_{h},\boldsymbol{r}_{h}(u)-\boldsymbol{r}_{h}(y))+\langle(\boldsymbol{p}-\boldsymbol{p}_{h}(u))\cdot\boldsymbol{n},\boldsymbol{\sigma}_{h}\cdot \boldsymbol{n}\rangle\\
		&=\langle(\boldsymbol{p}-\boldsymbol{p}_{h}(u))\cdot\boldsymbol{n},(\boldsymbol{\sigma}_{h}-\boldsymbol{\sigma})\cdot \boldsymbol{n}\rangle+\langle(\boldsymbol{p}-\boldsymbol{p}_{h}(u))\cdot\boldsymbol{n},\boldsymbol{\sigma}\cdot \boldsymbol{n}\rangle\\
		&\le \Vert(\boldsymbol{p}-\boldsymbol{p}_{h}(u))\cdot\boldsymbol{n}\Vert_{0,\Gamma}\Vert(\boldsymbol{\sigma}_{h}-\boldsymbol{\sigma})\cdot \boldsymbol{n}\Vert_{0,\Gamma}+\Vert(\boldsymbol{p}-\boldsymbol{p}_{h}(u))\cdot\boldsymbol{n}\Vert_{-\frac{1}{2},\Gamma}\Vert\boldsymbol{\sigma}\cdot \boldsymbol{n}\Vert_{\frac{1}{2},\Gamma}\\
		&\le Ch\Vert y\Vert_{2,\Omega}\Vert\zeta\Vert_{2,\Omega}+Ch\Vert\boldsymbol{\sigma}\Vert_{1,\Omega}.
	\end{aligned}
\end{equation}
Therefore, it follows from \eqref{4.20} and \eqref{4.23} that                    
\begin{equation}\label{4.24}
	\Vert z_{h}(u)-z_{h}(y)\Vert_{0,\Omega}\le Ch.
\end{equation}
Combining the results of Lemma \ref{lem4.3}, \eqref{4.19} and \eqref{4.24} and setting $\varepsilon,\delta$ small enough, we have
\begin{equation}\label{4.25}
	\gamma\Vert u-u_{h}\Vert_{0,\Omega}^{2}+\Vert\boldsymbol{p}_{h}(u)\cdot\boldsymbol{n}-\boldsymbol{p}_{h}\cdot\boldsymbol{n} \Vert_{0,\Gamma}^{2}\le Ch^{2}.
\end{equation}
From Lemma \ref{lem4.3} and \eqref{4.17}, we have
\begin{equation}\label{4.26}
	\Vert y-y_{h}\Vert_{0,\Omega}\le Ch.
\end{equation}
To estimate $\Vert z_{h}(u)-z_{h}\Vert_{0,\Omega}$, we still use a duality argument. Let $(\boldsymbol{\psi},\varphi)\in H(\mbox{div}, \Omega)\times L^{2}(\Omega)$ satisfy
\begin{equation}\label{4.27}
	\left\{
	\begin{aligned}
		a(\boldsymbol{\psi},\boldsymbol{v})+b(\varphi,\boldsymbol{v})&=0  &&\forall \boldsymbol{v}\in\boldsymbol{V},\\
		b(w,\boldsymbol{\psi})&=-(z_{h}(u)-z_{h},w) &&\forall w\in W.\\
	\end{aligned}
\right.
\end{equation}
Then the mixed virtual element scheme of \eqref{4.27} is as follows
\begin{equation}\label{4.28}
	\left\{
	\begin{aligned}
		a(\boldsymbol{\psi}_{h},\boldsymbol{v}_{h})+b(\varphi_{h},\boldsymbol{v}_{h})&=0  &&\forall \boldsymbol{v}_{h}\in\boldsymbol{V}_{h},\\
		b(w,\boldsymbol{\psi}_{h})&=-(z_{h}(u)-z_{h},w_{h}) &&\forall w_{h}\in W_{h}.\\
	\end{aligned}
	\right.
\end{equation}
Similarly to estimate $\Vert z_{h}(u)-z_{h}(y)\Vert_{0,\Omega}$, we have
\begin{equation}\label{4.29}
	\begin{aligned}
		\Vert z_{h}(u)-z_{h}\Vert_{0,\Omega}^{2}&=b(z_{h}(u)-z_{h},\boldsymbol{\psi}_{h})\\
		&=-a_{h}(\boldsymbol{r}_{h}(u)-\boldsymbol{r}_{h},\boldsymbol{\psi}_{h})+\langle(\boldsymbol{p}_{h}-\boldsymbol{p}_{h}(u))\cdot\boldsymbol{n},\boldsymbol{\psi}_{h}\cdot \boldsymbol{n}\rangle\\
		&=b(\varphi_{h},\boldsymbol{r}_{h}(u)-\boldsymbol{r}_{h})+\langle(\boldsymbol{p}_{h}-\boldsymbol{p}_{h}(u))\cdot\boldsymbol{n},\boldsymbol{\psi}_{h}\cdot \boldsymbol{n}\rangle\\
		&=0+\langle(\boldsymbol{p}_{h}-\boldsymbol{p}_{h}(u))\cdot\boldsymbol{n},\boldsymbol{\psi}_{h}\cdot \boldsymbol{n}\rangle\\
		&\le \Vert(\boldsymbol{p}_{h}-\boldsymbol{p}_{h}(u))\cdot\boldsymbol{n}\Vert_{0,\Gamma}(\Vert(\boldsymbol{\psi}-\boldsymbol{\psi}_{h})\cdot \boldsymbol{n}\Vert_{0,\Gamma}+\Vert\boldsymbol{\psi}\cdot \boldsymbol{n}\Vert_{0,\Gamma})\\
		&\le Ch\Vert\varphi\Vert_{2,\Omega}.
	\end{aligned}
\end{equation}
Therefore,
\begin{equation}\label{4.30}
	\Vert z_{h}(u)-z_{h}\Vert_{0,\Omega}\le Ch.
\end{equation}
Thus, it follows by Lemma \ref{lem4.4}, \eqref{4.23} and \eqref{4.30} that
\begin{equation}\label{4.31}
	\Vert z-z_{h}\Vert_{0,\Omega}\le Ch.
\end{equation}
Summing up \eqref{4.25}, \eqref{4.26} and \eqref{4.31}, the Theorem follows.
\end{proof}

\section{Numerical experiment}
In this section, we will verify our theoretical analysis by numerical experiment using mixed virtual element method. Numerical experiments are carried out on square, random and nonconvex polygon meshes, which are shown in Figure \ref{fig1}, respectively. Due to the reduced regularity of the optimal control problem, we use the lowest order space to approximate the state variable $y$ and the adjoint variable $z$. The discrete control variable $u_{h}$ is calculated by using the variational discretization. We use the fixed point iteration and the code is based on Matlab with the mVEM package (see \cite{code}).

\begin{figure}[H]
	\begin{center}
	    \includegraphics[width=0.3\linewidth,trim=70 30 60 20,clip]{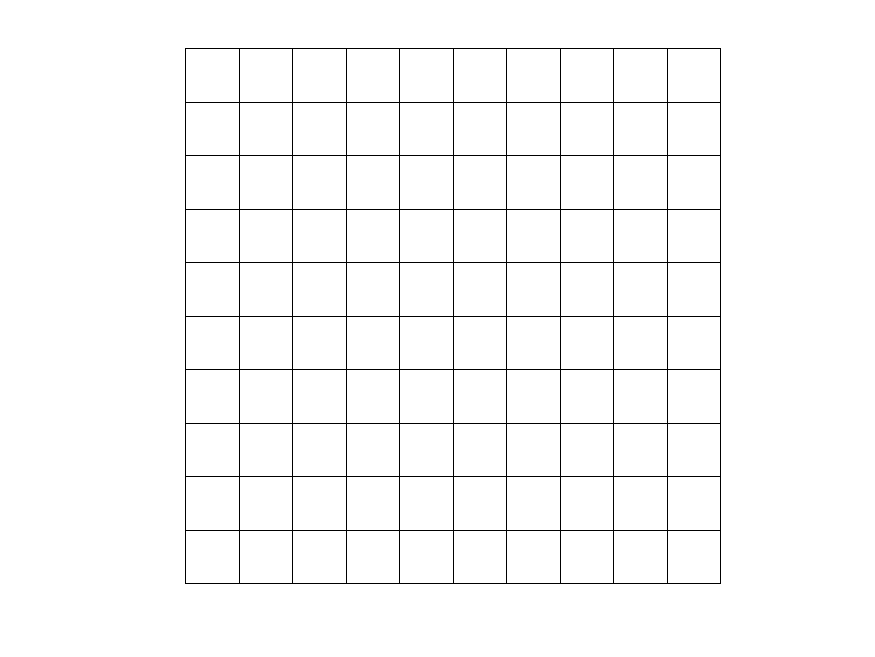}
		\includegraphics[width=0.3\linewidth,trim=70 30 60 20,clip]{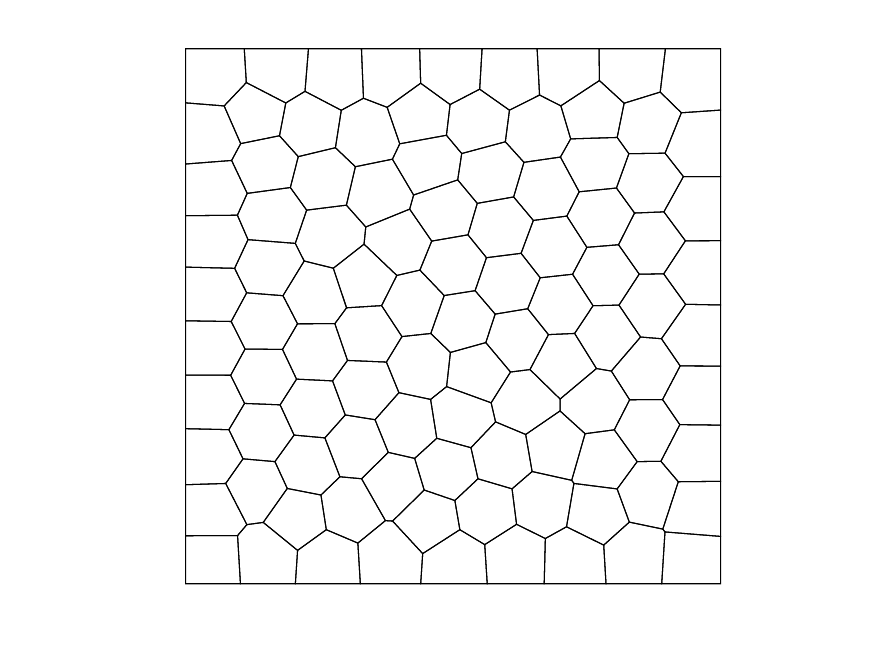}
		\includegraphics[width=0.3\linewidth,trim=70 30 60 20,clip]{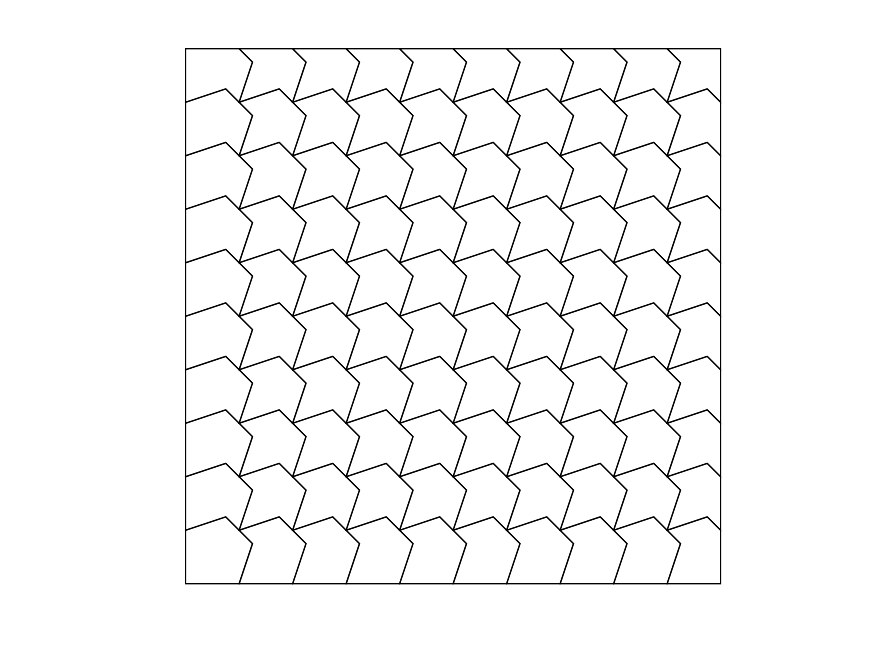}
		\caption{Different meshes}\label{fig1}		
	\end{center}
	\vspace{-1em}
\end{figure}
\noindent\textbf{Example 5.1.} The first example is defined on the unit square $\Omega = [0,1]\times[0,1]$. The true solution is chosen as
	\begin{equation*}
		\begin{aligned}
			y&=\mbox{sin}(\pi x_{1})\mbox{sin}(\pi x_{2})+1,\\
			z&=(x_{1}^{2}-x_{1})+(x_{2}-x_{2}^{2}),\\
			u&=P_{U_{ad}}\{-\frac{1}{\gamma}z\},\quad \gamma=1.
		\end{aligned}
	\end{equation*}
	We set $A$ as identity matrix. Consequently, the data should be calculated as
	\begin{equation*}
		\begin{aligned}
		f&=-\text{div}A\nabla y-u,\\
		y_{d}\vert_{\Gamma}&=\frac{\partial y}{\partial n_{A}}+z
	\end{aligned}
	\end{equation*}
and
\begin{equation*}
	U_{ad}:=\{u\in L^{2}(\Omega),\ 0\le u\le 0.5\ \mbox{a.e. in }\Omega \}.
\end{equation*}

For the Example 5.1, the numerical results about the variables $y,z,u$ on three different meshes are displayed in Table 1-3. We have $O(h)$ orders for $u$, $y$ and $z$ by using the mixed virtual element method, which is the optimal convergence order that the lowest order space can deliver.
\begin{figure}[H]
	\begin{center}
		\includegraphics[width=0.3\linewidth]{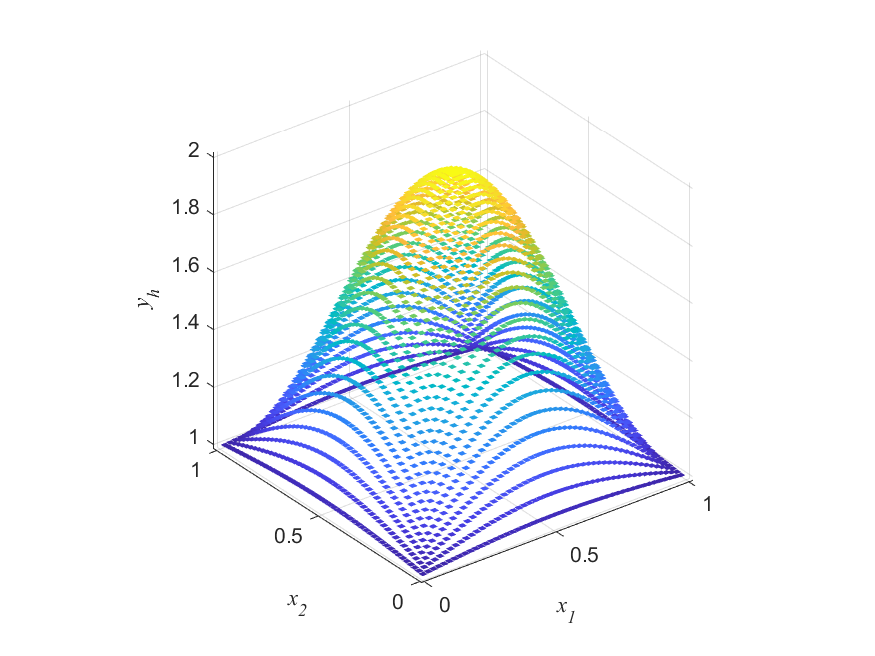}
		\includegraphics[width=0.3\linewidth]{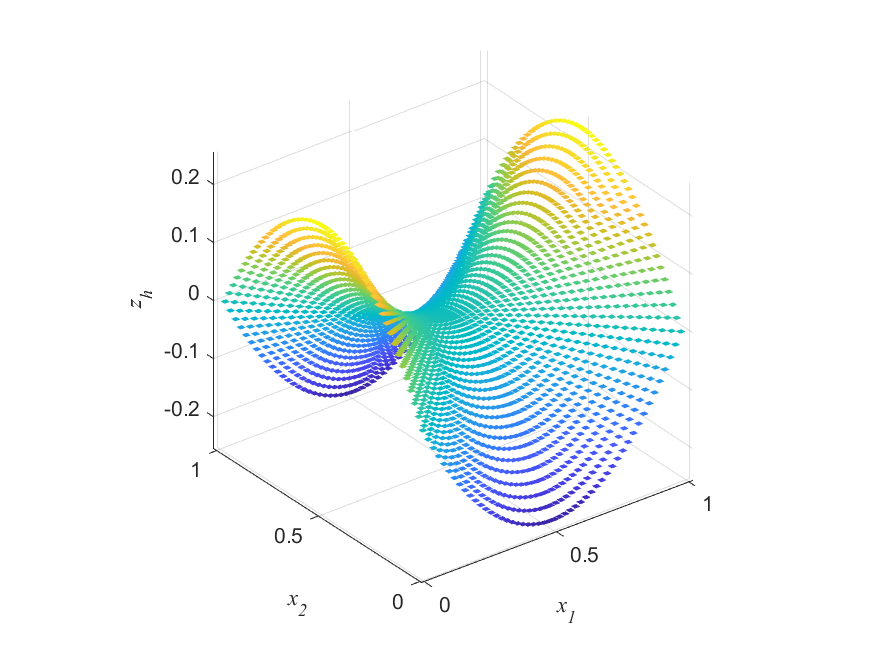}
		\includegraphics[width=0.3\linewidth]{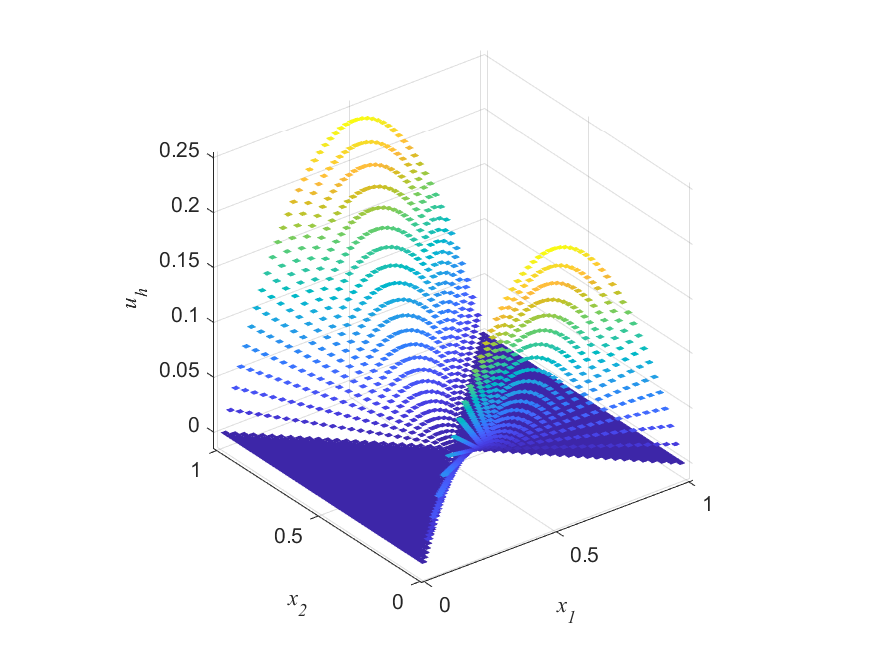}
		\caption{Different meshes}\label{fig21}		
	\end{center}
	\vspace{-1em}
\end{figure}
\begin{figure}[H]
	\begin{center}
		\includegraphics[width=0.3\linewidth]{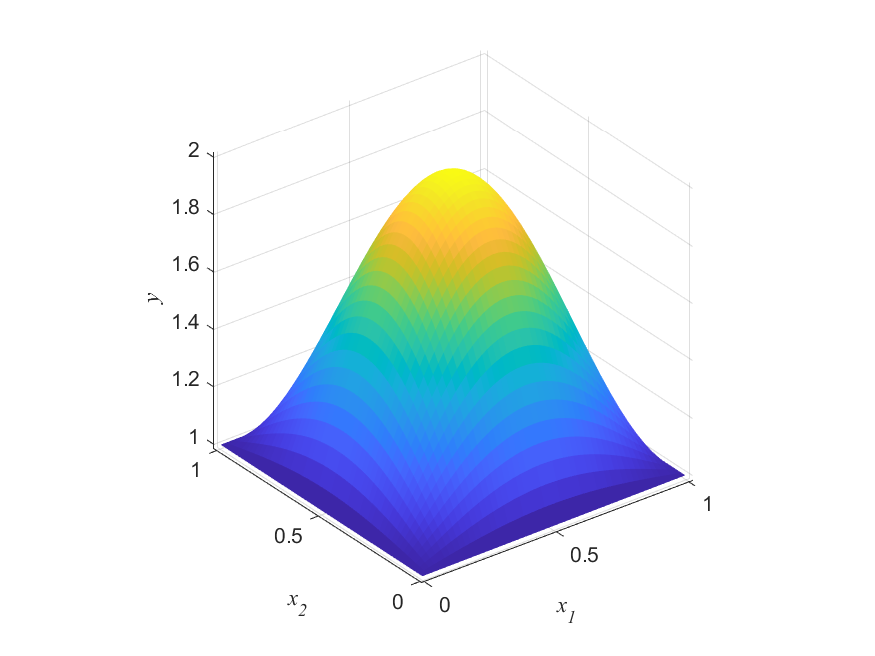}
		\includegraphics[width=0.3\linewidth]{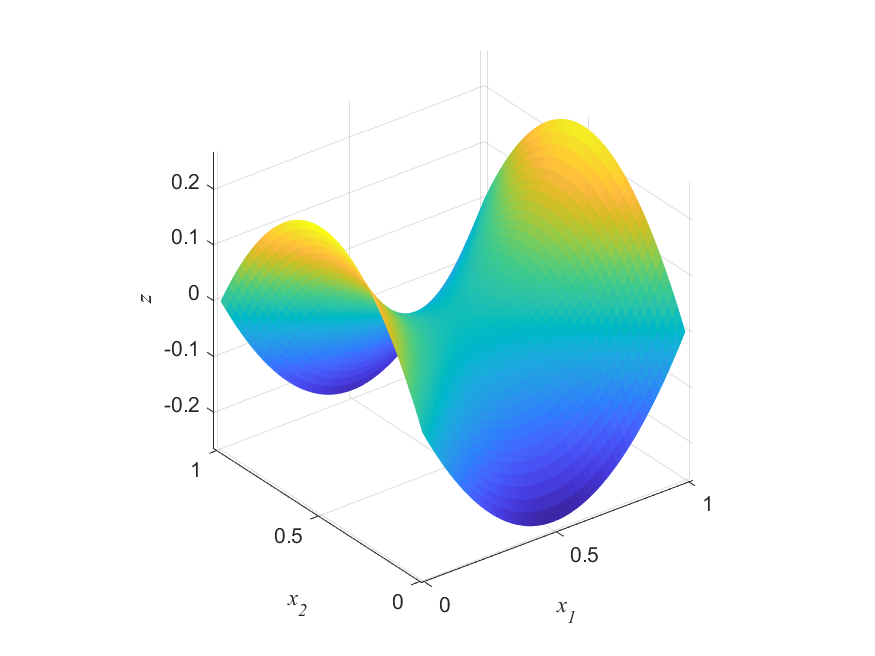}
		\includegraphics[width=0.3\linewidth]{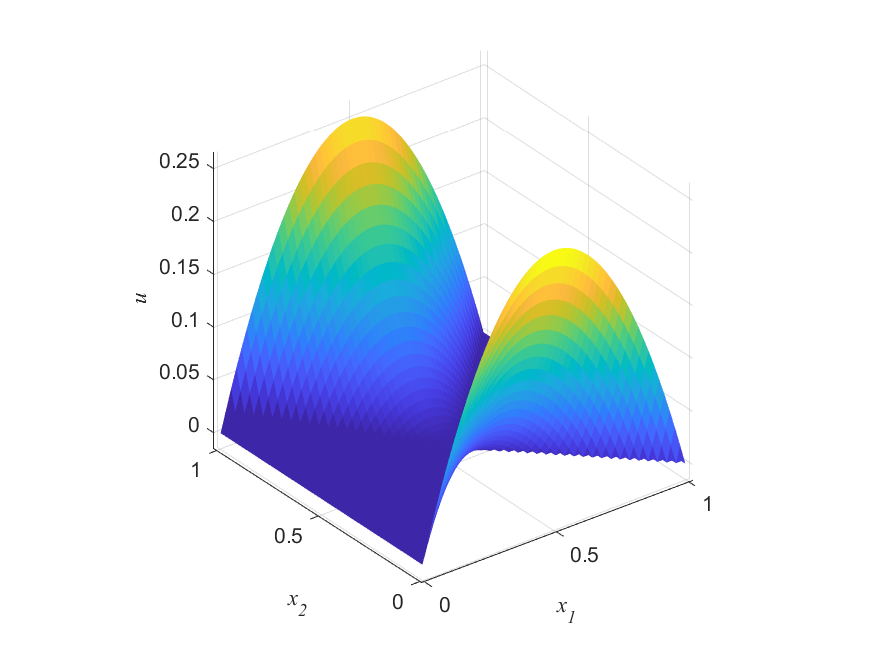}
		\caption{Different meshes}\label{fig22}		
	\end{center}
	\vspace{-1em}
\end{figure}
\begin{figure}[H]
	\begin{center}
		\includegraphics[width=0.3\linewidth]{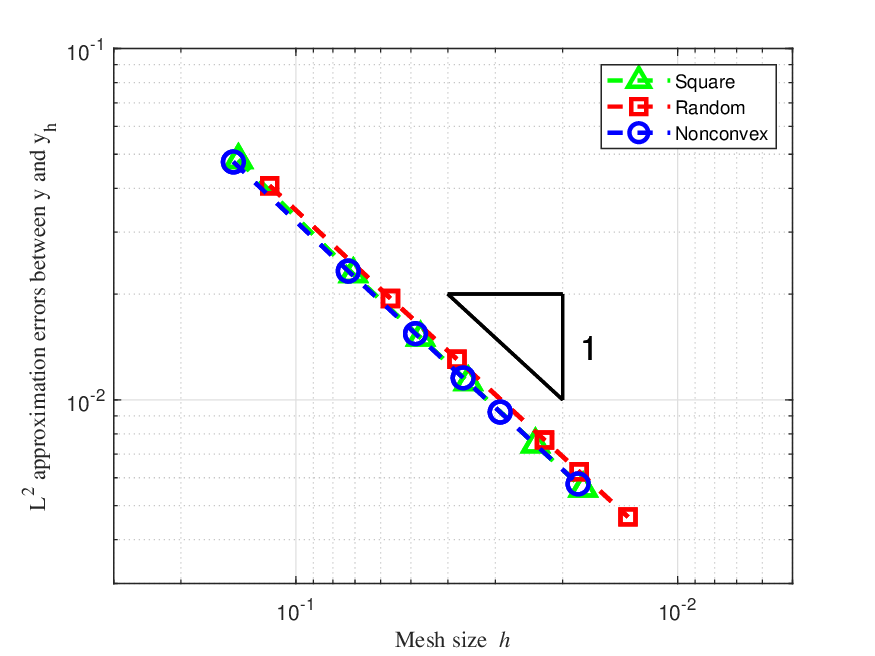}
		\includegraphics[width=0.3\linewidth]{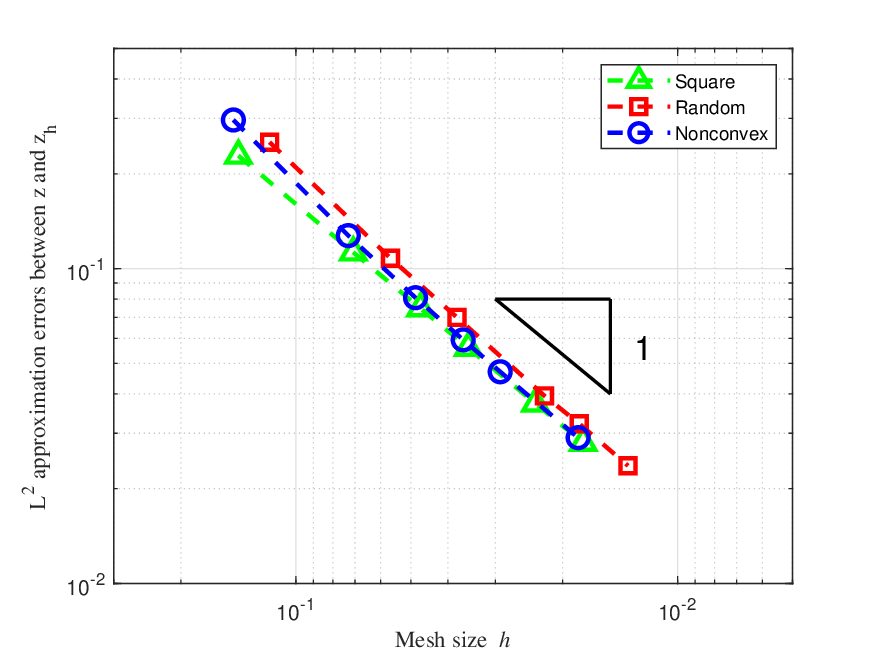}
		\includegraphics[width=0.3\linewidth]{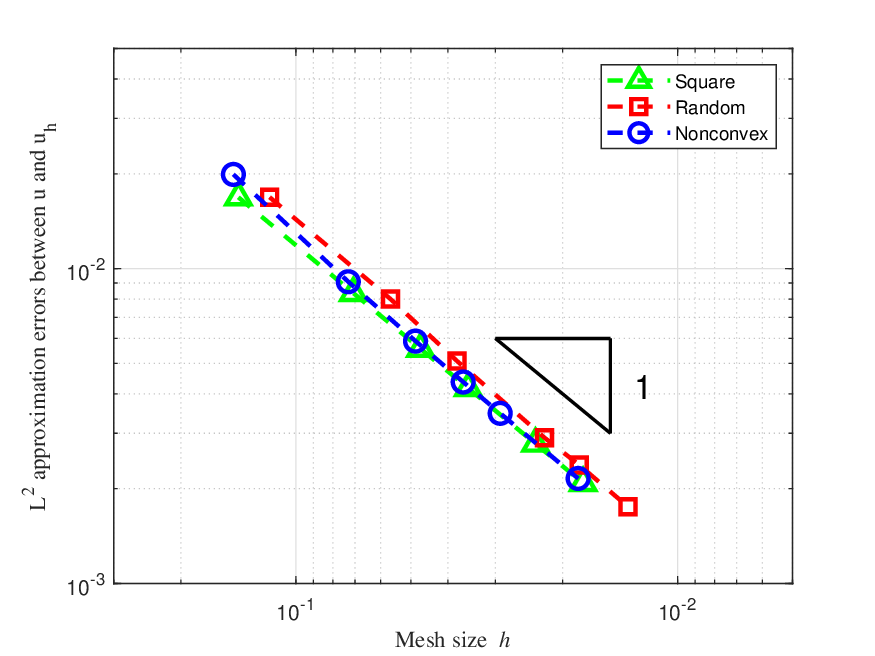}
		\caption{Different meshes}\label{fig23}		
	\end{center}
	\vspace{-1em}
\end{figure}

\noindent\textbf{Example 5.2. }The second example is also defined on the unit square $\Omega = [0,1]\times[0,1]$ (see \cite{4}). The data is chosen as
\begin{equation*}
	\begin{aligned}
		f&=\frac{1}{(x_{1}^{2}+x_{2}^{2})^{\frac{2}{5}}},\ y\vert_{\Gamma}=0,\\
		y_{d}&=0.
	\end{aligned}
\end{equation*}
We set $A$ as identity matrix, $\gamma=1$, and the constrained control set is chosen as
\begin{equation*}
	U_{ad}:=\{u\in L^{2}(\Omega),\ -0.5\le u\le -0.1\ \mbox{a.e. in }\Omega \}.
\end{equation*}

Note that we have no explicit expression of the exact solution, so we have solved it numerically with 250 000 elements on square mesh, and then we used this solution for comparison with other solutions on the meshes with bigger $h$. In this example, $f\in L^{2}(\Omega)$ but $f\notin L^{\frac{5}{2}}(\Omega)$. The numerical results are presented in Table 4-6 when $f$ has less regularity. We observe the convergence order is $O(h)$, which is the optimal convergence order.

\begin{figure}[H]
	\begin{center}
		\includegraphics[width=0.3\linewidth]{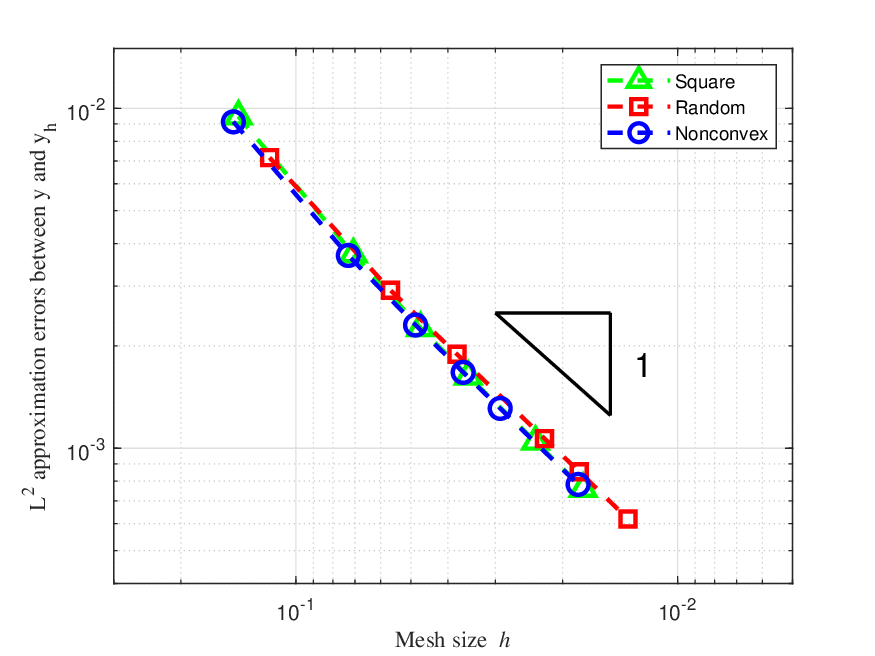}
		\includegraphics[width=0.3\linewidth]{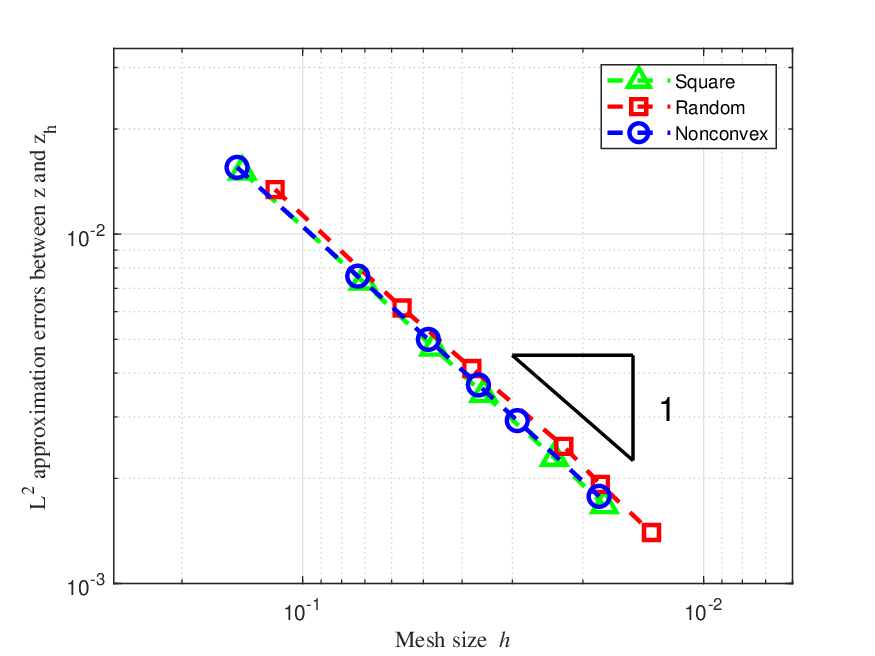}
		\includegraphics[width=0.3\linewidth]{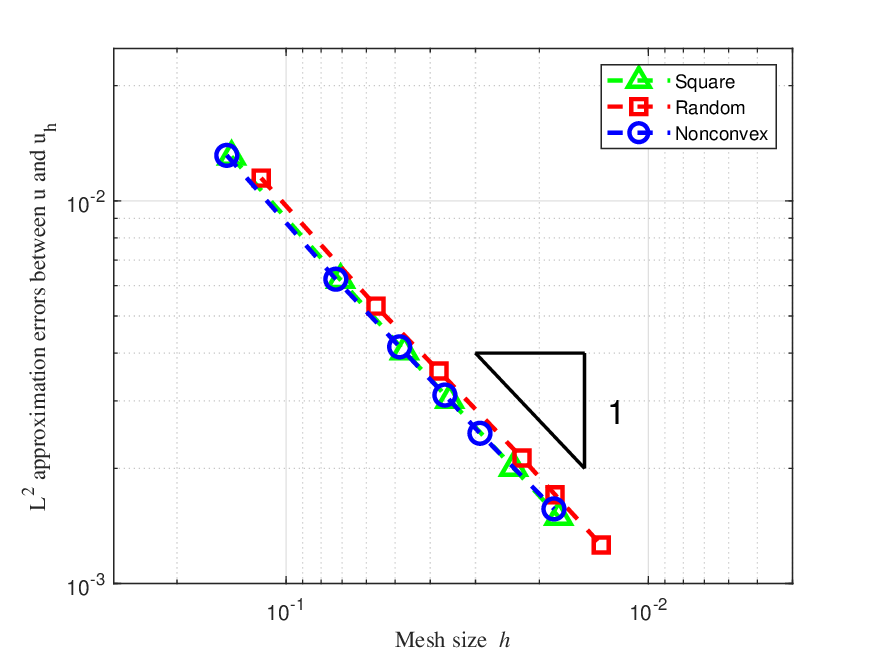}
		\caption{Different meshes}\label{fig33}		
	\end{center}
	\vspace{-1em}
\end{figure}

\begin{table}[H]
	\centering
	\caption{Convergence results for the errors of $y$, $z$, $u$ for Example 5.2 on square meshes}
	\begin{tabular}{ccccccc}
		\toprule 
		$h$ & 0.1414  & 0.0707 &0.0471 &0.0353 &0.0236& 0.0176 \\
		\midrule 
		$\Vert y-y_{h}\Vert_{0,\Omega}$ & 9.4677e-03 & 3.7207e-03 & 2.2582e-03&1.6256e-03 &1.0456e-03&7.5993e-04 \\
		Order& $\backslash$ & 1.3474 &1.2315 &1.1426 &1.0884&1.1093  \\
		$\Vert z-z_{h}\Vert_{0,\Omega}$ & 1.5097e-02 &7.3138e-03 &4.7321e-03  &3.4892e-03 &2.2902e-03&1.6752e-03  \\
		Order& $\backslash$ & 1.0456 &1.0738 &1.0592 &1.0383&1.0871  \\
		$\Vert u-u_{h}\Vert_{0,\Omega}$ & 1.3039e-02 & 6.2118e-03 &4.0437e-03  &3.0181e-03 &2.0064e-03&1.4934e-03  \\
		Order& $\backslash$ & 1.0698 &1.0588 &1.0169 &1.0069&1.0265 \\
		\bottomrule 
	\end{tabular}
\end{table}

\begin{table}[H]
	\centering
	\caption{Convergence results for the errors of $y$, $z$, $u$ for Example 5.2 on random meshes}
	\begin{tabular}{ccccccc}
		\toprule 
		$h$ &0.1171 & 0.0565 &0.0378 & 0.0223&0.0181& 0.0135 \\
		\midrule 
		$\Vert y-y_{h}\Vert_{0,\Omega}$ & 7.1603e-03 & 2.9152e-03 & 1.8907e-03&1.0674e-03 &8.5323e-04&6.1902e-04\\
		Order& $\backslash$ & 1.2322 &1.0811 & 1.0819&1.0678&1.0932 \\
		$\Vert z-z_{h}\Vert_{0,\Omega}$ & 1.3415e-02 &6.1491e-03 &4.1275e-03  &2.4687e-03 &1.9219e-03&1.3990e-03 \\
		Order& $\backslash$ & 1.0697 &0.9953 &0.9727 &1.1934&1.0818 \\
		$\Vert u-u_{h}\Vert_{0,\Omega}$ & 1.1461e-02 & 5.3049e-03 &3.5904e-03  &2.1280e-03 &1.7062e-03&1.2607e-03 \\
		Order& $\backslash$ & 1.0563 &0.9747& 0.9900 &1.0530&1.0307\\
		\bottomrule 
	\end{tabular}
\end{table}

\begin{table}[H]
	\centering
	\caption{Convergence results for the errors of $y$, $z$, $u$ for Example 5.2 on nonconvex meshes}
	\begin{tabular}{ccccccc}
		\toprule 
		$h$ &0.1457 & 0.0728 &0.0485 &0.0364 &0.0291& 0.0182 \\
		\midrule 
		$\Vert y-y_{h}\Vert_{0,\Omega}$ & 9.1257e-03 &3.6907e-03& 2.3042e-03&1.6721e-03 &1.3097e-03&7.8265e-04\\
		Order& $\backslash$ & 1.3060 &1.1618 &1.1148 &1.0946&1.0955 \\
		$\Vert z-z_{h}\Vert_{0,\Omega}$ & 1.5531e-02 &7.5826e-03 &4.9975e-03  &3.7015e-03 &2.9262e-03&1.7764e-03 \\
		Order& $\backslash$ & 1.0344 &1.0283 &1.0435 &1.0532&1.0620 \\
		$\Vert u-u_{h}\Vert_{0,\Omega}$ & 1.3144e-02 & 6.2386e-03 &4.1536e-03  &3.1075e-03 &2.4689e-03&1.5663e-03 \\
		Order& $\backslash$ & 1.0751 &1.0033 &1.0086 &1.0308&0.9682\\
		\bottomrule 
	\end{tabular}
\end{table}
\noindent\textbf{Example 5.3. }The third example is defined on the unit square $\Omega = [0,1]\times[0,1]$. The data is chosen as
\begin{equation*}
	\begin{aligned}
		y&=x_{1}^{2}x_{2}+\text{sin}(\pi x_{1})\text{sin}(\pi x_{2}),\\
		z&=-(x_{1}^{2}-x_{2}^{2}).
	\end{aligned}
\end{equation*}
We set 
\begin{equation*}
	A=
	\begin{bmatrix}
		x_{2}^{2}+1 & x_{1}x_{2} \\
		x_{1}x_{2} & x_{1}^{2}+1
	\end{bmatrix},
\end{equation*}
 $\gamma=1$, and the constrained control set is chosen as
\begin{equation*}
	U_{ad}:=\{u\in L^{2}(\Omega),\ 0\le u\le 0.5\ \mbox{a.e. in }\Omega \}.
\end{equation*}

In this example, the state equation has inhomogeneous Dirichlet boundary conditions and the coefficient matrix $A$ is a variable. We further need $A=(a_{ij}(\cdot))_{2\times2}\in (W^{1,\infty})^{2\times2}$ is a symmetric and positive definite matrix. Then, the process of proving a priori error is similar when $A$ is not a constant matrix. In Table 7-9 we present the related convergence results. We also observe first order convergence rates for both the state, adjoint state and control and that are optimal.
\begin{figure}[H]
	\begin{center}
		\includegraphics[width=0.3\linewidth]{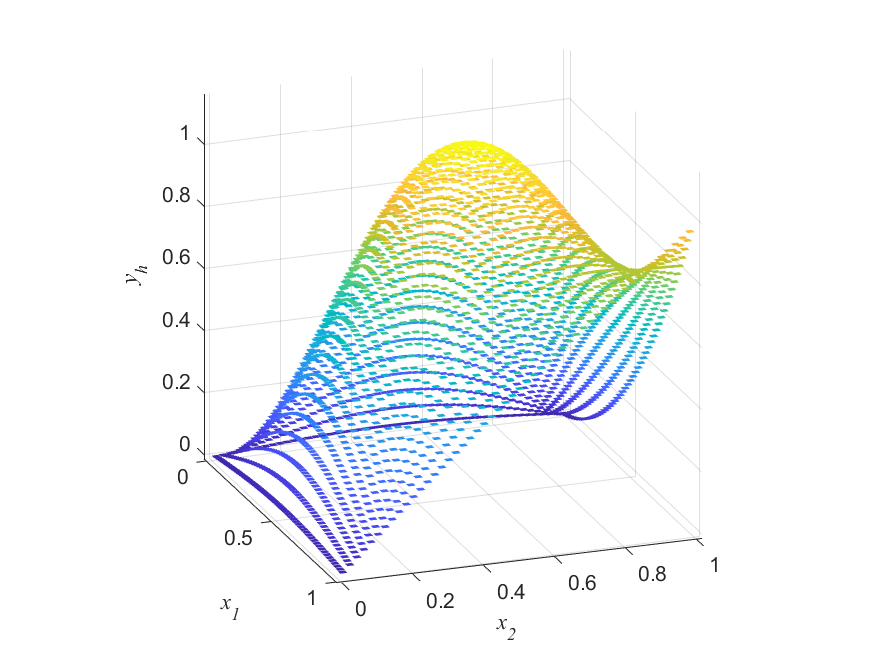}
		\includegraphics[width=0.3\linewidth]{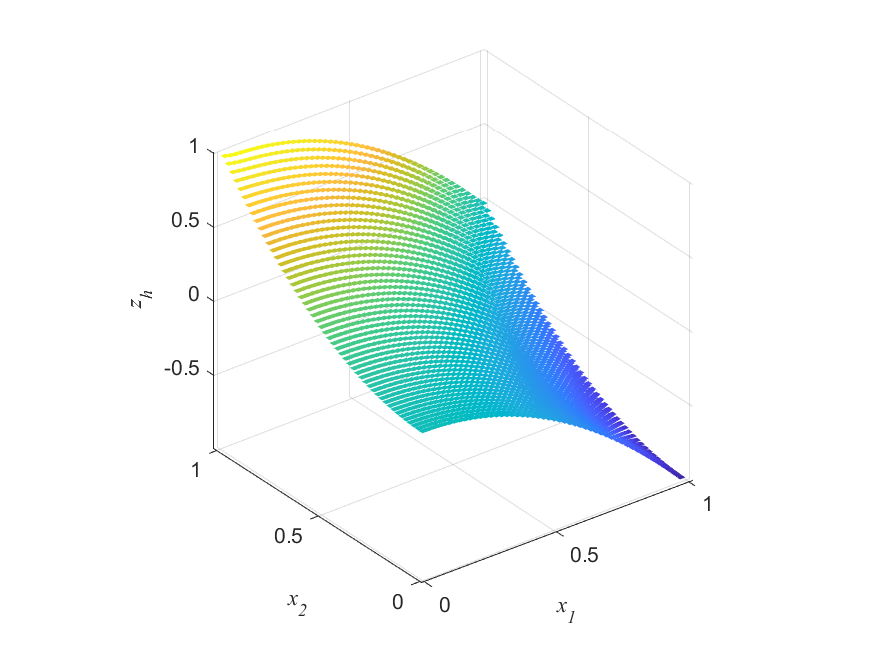}
		\includegraphics[width=0.3\linewidth]{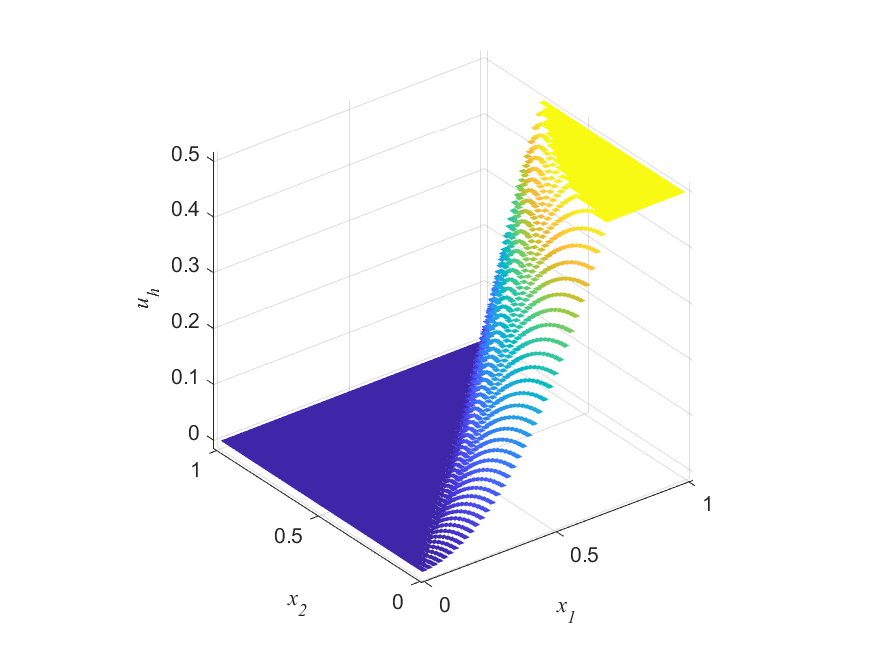}
		\caption{Different meshes}\label{fig41}		
	\end{center}
	\vspace{-1em}
\end{figure}
\begin{figure}[H]
	\begin{center}
		\includegraphics[width=0.3\linewidth]{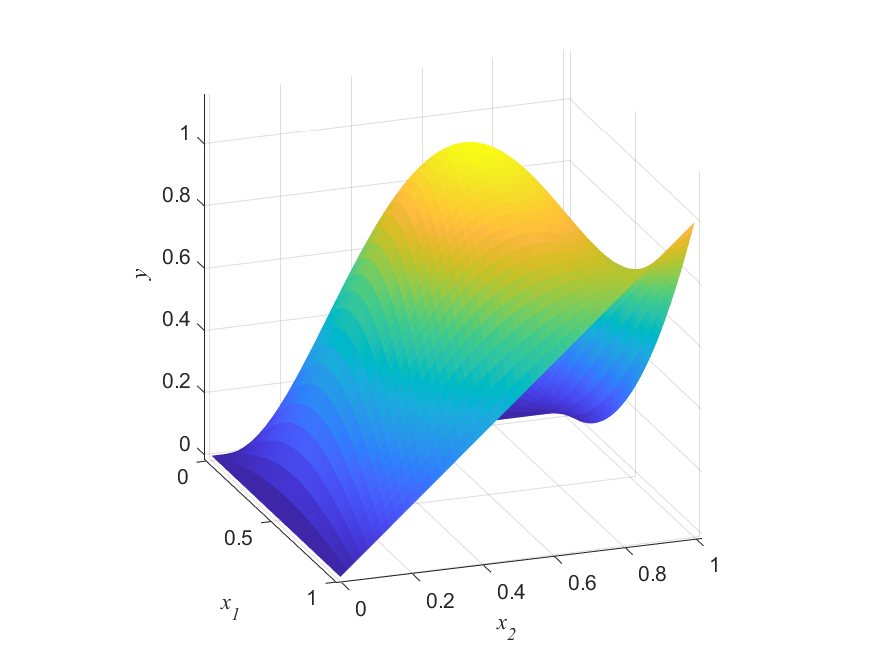}
		\includegraphics[width=0.3\linewidth]{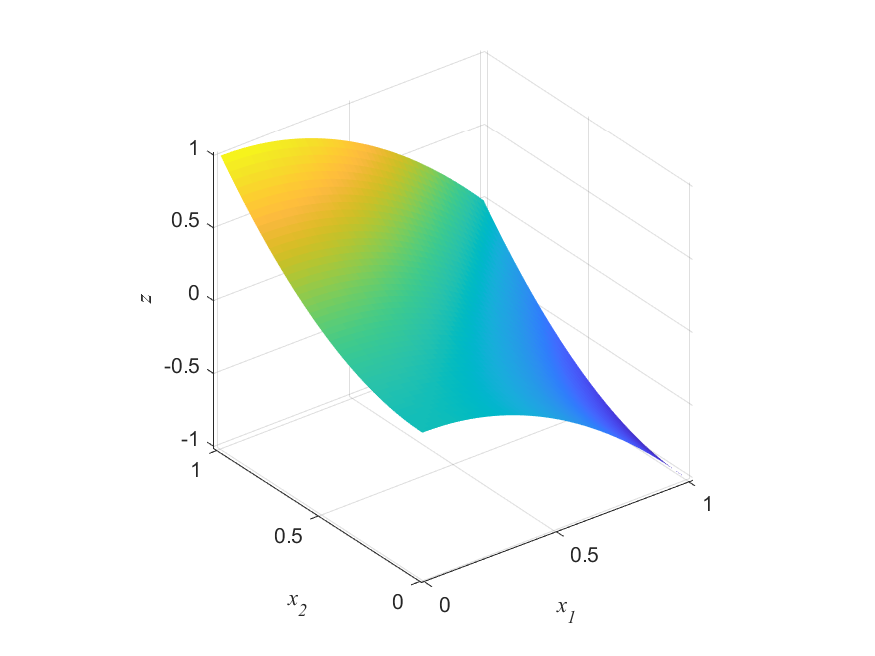}
		\includegraphics[width=0.3\linewidth]{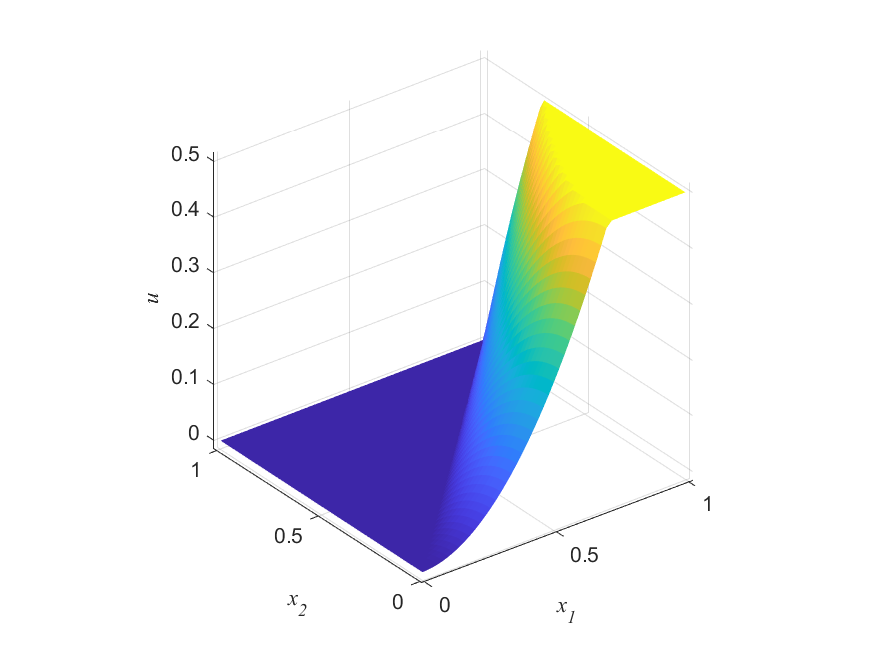}
		\caption{Different meshes}\label{fig42}		
	\end{center}
	\vspace{-1em}
\end{figure}
\begin{figure}[H]
	\begin{center}
		\includegraphics[width=0.3\linewidth]{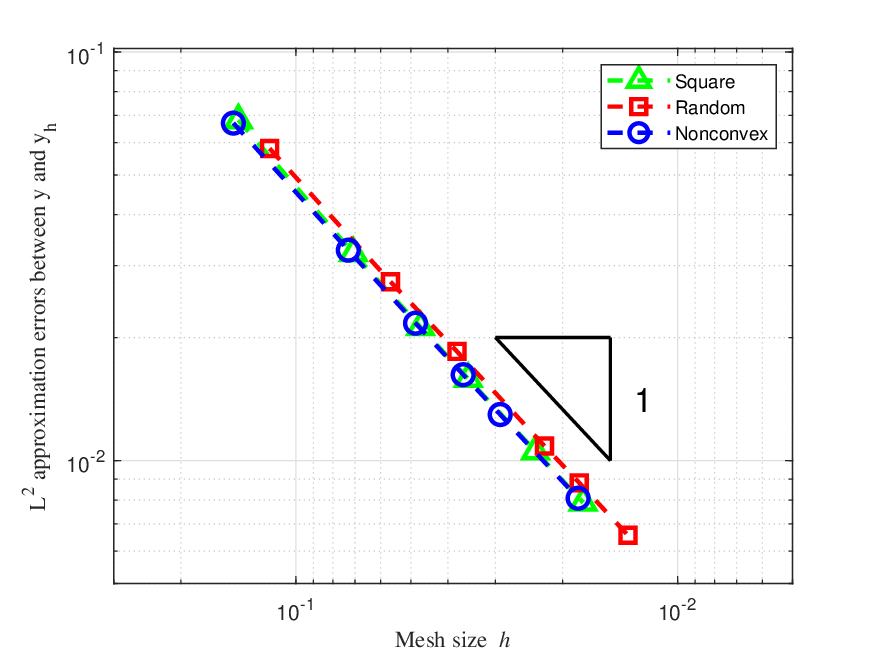}
		\includegraphics[width=0.3\linewidth]{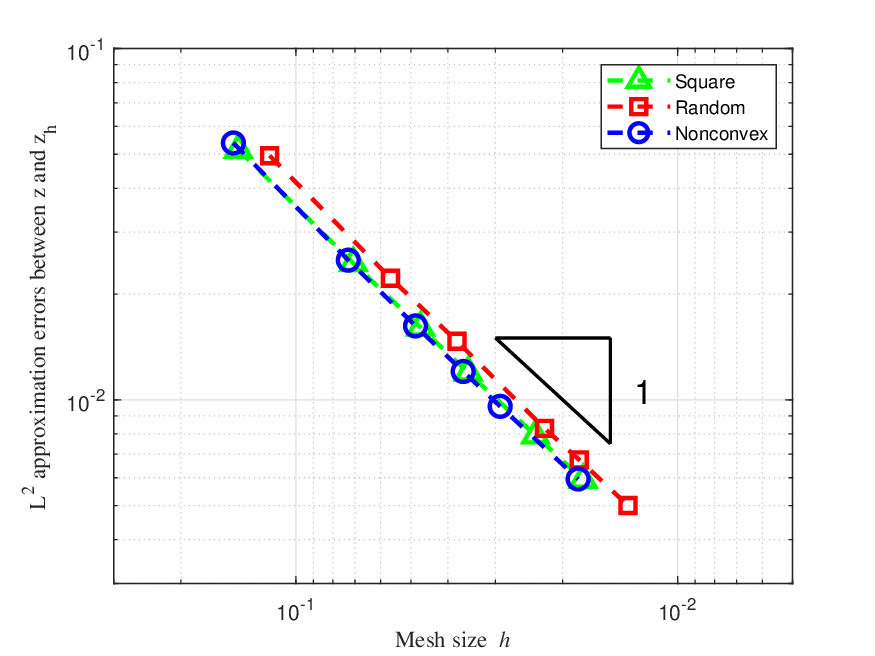}
		\includegraphics[width=0.3\linewidth]{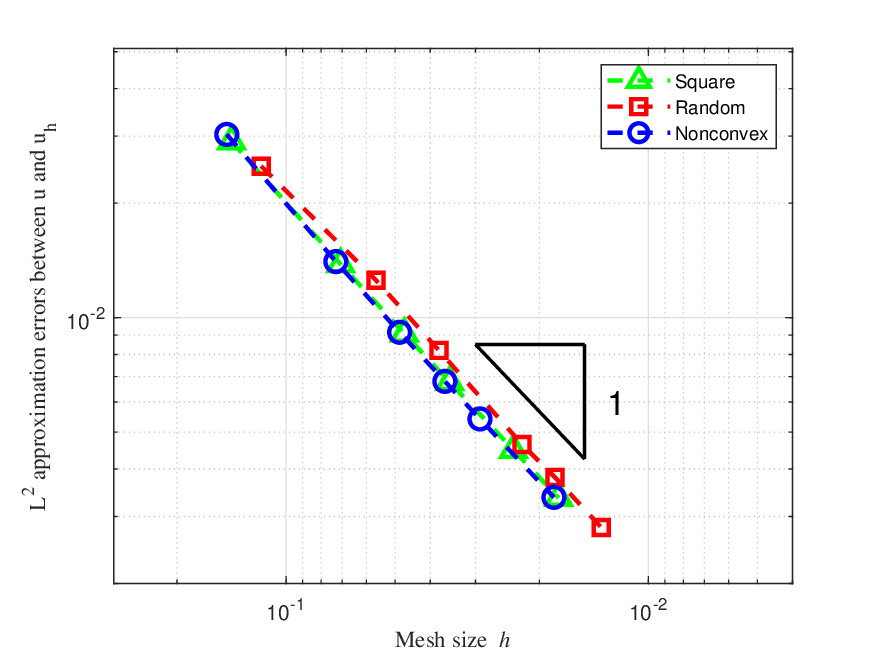}
		\caption{Different meshes}\label{fig4}		
	\end{center}
	\vspace{-1em}
\end{figure}

\section*{Acknowledgments}
The research was supported by the National Natural Science Foundation of China under Grant No. 11971276.

\end{document}